\documentclass[11pt]{amsart}
\usepackage{amssymb,latexsym,amsmath,graphicx,graphics,epic,eepic}

\addtolength{\oddsidemargin}{-2pc}
\addtolength{\evensidemargin}{-2pc}
\addtolength{\textwidth}{4pc}

\theoremstyle{plain}
\newtheorem{theorem}{Theorem}
\numberwithin{theorem}{section}
\newtheorem{lemma}[theorem]{Lemma}
\newtheorem{proposition}[theorem]{Proposition}
\newtheorem{corollary}[theorem]{Corollary}

\theoremstyle{definition}
\newtheorem{definition}[theorem]{Definition}

\newtheorem{question}[theorem]{Question}
\newtheorem{remark}[theorem]{Remark}

\newcommand{\C}{{\mathbb C}}
\newcommand{\R}{{\mathbb R}}
\newcommand{\Z}{{\mathbb Z}}

\renewcommand{\P}{{\mathbb P}}
\newcommand{\s}{{\mathbb S}}


\begin{document}
\title{Fixed-point free circle actions on $4$-manifolds}
\author{Weimin Chen}
\subjclass[2000]{Primary 57S15, Secondary 57M07, 57M50}
\keywords{}
\thanks{The author is partially supported by NSF grant DMS-1065784.}
\date{\today}
\maketitle

\begin{abstract}
This paper is concerned with fixed-point free $\s^1$-actions (smooth or locally linear) on
orientable $4$-manifolds. We show that the fundamental group plays a predominant role in the
equivariant classification of such $4$-manifolds. In particular, it is shown that for any finitely 
presented group with infinite center, there are at most finitely many distinct smooth (resp.
topological) $4$-manifolds which support a fixed-point free smooth (resp. locally linear)
$\s^1$-action and realize the given group as the fundamental group. A similar statement 
holds for the number of equivalence classes of fixed-point free $\s^1$-actions under some 
further conditions on the fundamental group. The connection between the classification 
of the $\s^1$-manifolds and the fundamental group is given by a certain decomposition, 
called {\it fiber-sum decomposition}, of the $\s^1$-manifolds. More concretely, each fiber-sum 
decomposition naturally gives rise to a Z-splitting of the fundamental group. There are two 
technical results in this paper which play a central role in our considerations. One states that 
the Z-splitting is a canonical JSJ decomposition of the fundamental group in the sense of 
Rips and Sela \cite{RipS}. Another asserts that if the fundamental group has infinite center, 
then the homotopy class of principal orbits of any fixed-point free $\s^1$-action on the 
$4$-manifold must be infinite, unless the $4$-manifold is the mapping torus of a periodic 
diffeomorphism of some elliptic $3$-manifold.
\end{abstract}

\section{Introduction}

Locally linear $\s^1$-actions on oriented $4$-manifolds were classified by Fintushel up to 
orientation-preserving equivariant homeomorphisms (for smooth $\s^1$-actions the classification 
is up to orientation-preserving equivariant diffeomorphisms), cf. \cite{F0,F1,F2}. 
One associates to each locally linear $\s^1$-action a legally weighted $3$-manifold, which is the 
orbit space decorated with certain orbit-type data and a characteristic class of the $\s^1$-action. 
The equivariant classification of the $\s^1$-four-manifolds is then given by the isomorphism classes 
of the corresponding legally weighted $3$-manifolds. 

An important technique for studying locally linear $\s^1$-actions on $4$-manifolds is a replacement
trick due to Pao \cite{Pao}. Pao's trick allows one to trade a certain weighted circle in a legally weighted
$3$-manifold for a pair of fixed points, or to have the weighted circle deleted and a $3$-ball removed
from the legally weighted $3$-manifold. (In particular, Pao's replacement trick applies only to locally 
linear $\s^1$-actions with a nonempty fixed-point set.) This procedure has the effect of replacing the 
given $\s^1$-action by another (non-equivalent) $\s^1$-action on the same $4$-manifold. Besides the 
construction of locally linear, nonlinear $\s^1$-actions on $\s^4$ in the original paper \cite{Pao}, the 
following are some of the further implications of Pao's trick when combined with the classification 
results of Fintushel in \cite{F0,F1,F2}:

\begin{itemize}
\item If a $4$-manifold $X$ admits a locally linear (resp. smooth) $\s^1$-action with a pair of fixed 
points or a fixed $2$-sphere, then $X$ admits infinitely many non-equivalent locally linear (resp. smooth) 
$\s^1$-actions (cf. \cite{Pao}). (There are many examples of such $4$-manifolds, including a large 
class of simply-connected $4$-manifolds.)
\item Modulo the $3$-dimensional Poincar\'{e} conjecture (which is now resolved \cite{P}), 
a simply-connected, smooth $\s^1$-four-manifold is diffeomorphic to a connected sum of $\s^4$,
$\pm \C\P^2$, or $\s^2\times\s^2$ (cf. \cite{F2}, compare also \cite{Y}).
\item If an oriented $4$-manifold with $b_2^{+}\geq 1$ admits a locally linear (resp. smooth) $\s^1$-action having at least one fixed point, then it contains a topologically (resp. smoothly) embedded, essential,
$2$-sphere of non-negative self-intersection (cf. Baldridge \cite{Bald1}, Theorem 2.1).\footnote{Baldridge \cite{Bald1} works in the smooth category, but the arguments are valid in the locally linear category as well.} In particular, the Hurwitz map $\pi_2\rightarrow H_2$ has infinite image. Baldridge's theorem gives a useful obstruction for the existence of $\s^1$-actions with fixed points, particularly for the smooth case as such a smoothly embedded $2$-sphere constrains the Seiberg-Witten invariants of the $4$-manifold (cf. \cite{FS}). 
\end{itemize}

In this paper we study fixed-point free $\s^1$-actions on orientable $4$-manifolds, either smooth or 
locally linear, depending on which category (i.e., smooth or topological) we work in. The arguments 
are valid for both categories; for simplicity, we shall work mainly in the smooth category. 
Our results indicate that the equivariant classification of fixed-point free $\s^1$-actions, where there is
a lack of Pao's replacement trick, is sharply different from that of $\s^1$-actions with fixed points. 
In particular, we show that under reasonable assumptions, the fundamental group plays a predominant 
role in the equivariant classification of $4$-manifolds with a fixed-point free $\s^1$-action. We 
showcase this phenomena with the following two theorems. 

\begin{theorem}
Let $X$ be an orientable $4$-manifold such that (i) the center of $\pi_1(X)$ is infinite cyclic,
(ii) $\pi_1(X)$ is single-ended and is not isomorphic to the fundamental group of a Klein bottle, 
(iii) any canonical JSJ decomposition of $\pi_1(X)$ contains a vertex 
subgroup which is not isomorphic to an HNN extension of a finite cyclic group.
Then there exists a constant $C>0$ depending only on $\pi_1(X)$, such that the number of
equivalence classes of fixed-point free $\s^1$-actions (smooth or locally linear) on $X$ is bounded by $C$. 
\end{theorem}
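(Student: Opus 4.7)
The plan is to combine the two technical results advertised in the abstract with the Rips--Sela uniqueness theorem for canonical JSJ decompositions. From any fixed-point free $\s^1$-action on $X$, one extracts a fiber-sum decomposition of $X$, which in turn determines a splitting of $\pi_1(X)$ whose edge groups are generated by the class of a principal orbit. Hypothesis (i), together with the second technical result from the abstract, guarantees that the principal orbit has infinite order, so this is genuinely a $\Z$-splitting; the first technical result then identifies it as a canonical JSJ decomposition of $\pi_1(X)$ in the sense of Rips and Sela.

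The next step is to invoke the Rips--Sela uniqueness theorem. For a one-ended finitely presented group -- single-endedness being hypothesis (ii), with the Klein bottle group excluded to dodge a standard degenerate case -- any two canonical JSJ decompositions are related by a finite sequence of slide moves, and up to this equivalence there are only finitely many of them. Consequently the graph of groups underlying the fiber-sum decomposition (its graph, its vertex and edge groups up to conjugacy, and its boundary monomorphisms) is forced into a finite list of possibilities that depends only on $\pi_1(X)$.

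It then remains to bound, for each fixed JSJ type, the number of inequivalent fixed-point free $\s^1$-actions on $X$ realizing a fiber-sum decomposition of that type. Each piece is a Seifert $\s^1$-four-manifold whose equivariant classification is determined by Seifert-style orbit invariants and discrete equivariant gluing data along the decomposition $2$-tori. Hypothesis (iii) is precisely the condition that rules out the one family of vertex groups -- HNN extensions of finite cyclic groups -- in which infinitely many distinct Seifert invariants can share the same abstract vertex group; under (iii) all reconstruction data is confined to a finite set depending only on the graph of groups. The main obstacle will be this last reconstruction step: canonical JSJ uniqueness supplies only the graph-of-groups skeleton, while the Seifert invariants and $\s^1$-equivariant gluing maps are not directly visible in $\pi_1(X)$, and showing that hypotheses (i)--(iii) really do pin them down up to finitely many choices will require a careful case analysis of the admissible vertex groups.
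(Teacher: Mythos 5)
Your strategy is the same as the paper's: Theorem 1.6 turns the hypothesis on $z(\pi_1(X))$ into injectivity of the $\s^1$-action, hence a fiber-sum decomposition; Theorem 1.4 identifies the associated $\Z$-splitting as a canonical JSJ decomposition; and the Rips--Sela uniqueness (packaged as Proposition 3.5) pins down the combinatorial skeleton in terms of $\pi_1(X)$ alone. One slip in the geometry: the gluing in a fiber-sum decomposition is along copies of $\s^1\times\s^2$ (the boundaries of fiber neighborhoods, covering spherical $2$-suborbifolds in the base), not $2$-tori; you may be conflating this with the torus decomposition of $3$-manifolds, but the cuts in the base $3$-orbifold are spheres, not tori, and this matters for the isotopy/diffeomorphism counts that follow.

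The real gap is exactly where you defer to ``a careful case analysis of the admissible vertex groups'' --- that deferral is where the bulk of the paper's Theorem 6.3 lives. After the graph of groups is fixed, one must bound (a) the fiber-preserving isotopy classes of the embeddings $N_j\subset X_i$ and (b) the oriented fiber-preserving diffeomorphism class of each factor $X_i$, and this requires splitting by $\mathrm{rank}\,z(\pi_1(X_i))$ and the endedness of $\pi_1(X_i)$. The dangerous case is when $\pi_1(X_i)$ is double-ended, so $Y_i$ is a spherical $3$-orbifold and $X_i$ is a mapping torus of a periodic diffeomorphism of an elliptic $3$-manifold: there the fiber-preserving class of $X_i$ depends on the multiplicity of the regular fiber class $h$ inside $z(\pi_1(X_i))$, which is not a priori bounded. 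Your reading of hypothesis (iii) as ruling out ``infinitely many distinct Seifert invariants'' gestures at the right phenomenon but doesn't supply the mechanism: the paper uses (iii) to guarantee one factor with $\pi_1(\hat{Y}_i)$ non-abelian, and then Lemma 6.4 bounds the singular multiplicities of that $Y_i$, which bounds the multiplicity of $h$ in $\pi_1(N_j)\supset z(\pi_1(X))$. Without that argument the finiteness of the equivariant count genuinely fails (lens-space pieces admit $\s^1$-actions of arbitrarily high fiber multiplicity), which is also why the paper must handle the excluded case differently --- via Lemma 6.1 and a change of Seifert fibration --- for the non-equivariant Theorem 1.2.
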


\begin{theorem}
Let $G$ be a finitely presented group with infinite center. There exists a constant 
$C>0$ depending only on $G$, such that the number of diffeomorphism classes 
(resp. homeomorphism classes) of orientable $4$-manifolds admitting a fixed-point free, 
smooth (resp. locally linear) $\s^1$-action, whose fundamental group is isomorphic to $G$, 
is bounded by $C$. 
\end{theorem}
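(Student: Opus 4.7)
The plan is to combine the two technical results advertised in the abstract to reduce the classification of the relevant $4$-manifolds to bounded combinatorial data attached to a canonical JSJ decomposition of $G$. Let $X$ be an orientable $4$-manifold with $\pi_1(X)\cong G$ admitting a fixed-point free $\s^1$-action, and let $h$ denote the homotopy class of a principal orbit; then $h$ lies in the center of $G$. Since the center of $G$ is infinite, the second technical result listed in the abstract applies: either $h$ has infinite order in $G$, or $X$ is diffeomorphic to the mapping torus of a periodic self-diffeomorphism of a closed elliptic $3$-manifold. I would treat the two cases separately.

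In the main case, where $h$ has infinite order, I would work with a fiber-sum decomposition of $X$. By the first technical result, its associated graph-of-groups decomposition of $G$ is a canonical JSJ decomposition in the sense of Rips--Sela, and hence is determined by $G$ alone up to isomorphism; in particular the underlying graph together with its vertex and edge groups depends only on $G$. It then remains to bound the number of ways to realize this abstract graph-of-groups as an $\s^1$-four-manifold. Each piece of the fiber-sum decomposition is an $\s^1$-equivariant $4$-manifold whose fundamental group is the corresponding vertex group, and by Fintushel's equivariant classification it is determined by a legally weighted $3$-manifold-with-boundary together with an Euler class. I would argue that the Seifert invariants and the admissible Euler numbers of each piece are forced, up to finitely many choices, by the vertex group together with its distinguished central element. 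The identifications along the $T^2$-edges are then constrained by the amalgamations prescribed by the JSJ, modulo the finite group of self-diffeomorphisms of $T^2$ that commute with the principal circle. Together these produce a uniform bound depending only on $G$.

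In the exceptional case, $X$ is the mapping torus of a periodic diffeomorphism of a closed elliptic $3$-manifold $M$. Then $\pi_1(X)$ fits into an extension $1\to\pi_1(M)\to G \to \Z\to 1$, and $\pi_1(M)$ is identifiable with the torsion of a specific quotient of $G$, so only finitely many such $M$ occur for a given $G$. The resolution of the generalized Smale conjecture implies that $\mathrm{Diff}(M)$ has finitely many connected components; hence each $M$ admits only finitely many isotopy classes of periodic self-diffeomorphisms, producing only finitely many mapping tori. Adding this finite contribution to the bound from the main case yields the desired constant $C$.

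The principal obstacle I anticipate lies in the main case: one must extract the Seifert invariants and the Euler numbers of the pieces of the fiber-sum decomposition from the purely group-theoretic information of the canonical JSJ of $G$, and verify that the compatibility conditions along the $T^2$-edges leave only finitely many diffeomorphism types when the pieces are reassembled. A secondary but essential point is that, although different $\s^1$-actions on a priori different $X$'s produce different fiber-sum decompositions, the canonicity assertion in the first technical result forces all of them to be isomorphic as graphs of groups; this is what makes the constant $C$ genuinely intrinsic to $G$.
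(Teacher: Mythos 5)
Your high-level structure matches the paper's: you use the second technical result (Theorem 1.6) to split into an ``injective'' case, where a fiber-sum decomposition exists and Theorem 1.4 makes its underlying $\Z$-splitting a canonical JSJ decomposition of $G$, and a ``mapping torus'' case, which you bound separately. The mapping-torus case argument is fine (and differs from the paper in that you invoke finiteness of $\pi_0\mathrm{Diff}$ for elliptic $3$-manifolds, whereas the paper instead uses homotopy-implies-isotopy results and handles lens spaces with an ad hoc argument in Theorem 1.5(2)).

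However, there are two real gaps. First, the case analysis is incomplete. Theorem 1.4(1) -- the canonicality of the $\Z$-splitting -- requires $G$ to be single-ended and not a $2$-torus or Klein-bottle group, and the paper's Theorem 6.3 additionally assumes $\mathrm{rank}\,z(G)=1$ and that some vertex group of the JSJ is not an HNN extension of a finite cyclic group. A finitely presented group with infinite center can violate any of these: $\mathrm{rank}\,z(G)>1$ (handled by Theorem 4.3, where the manifolds are Seifert-fibered over $2$-orbifolds by $T^2$), $G\cong\pi_1(\text{Klein bottle})$ (handled by Theorem 6.2, which shows $X$ is unique), and the ``all vertex groups HNN extensions of finite cyclic groups'' case (handled at the end of Section 6 using Lemma 6.1 to re-choose the Seifert-type $\s^1$-fibrations). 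None of these appears in your proposal, and they are not formal afterthoughts: the rank $>1$ case in particular is not reachable by your JSJ argument at all.

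Second, and more centrally, the ``principal obstacle'' you flag at the end is exactly where the proof's substance lies, and your proposed route through it does not work. You suggest bounding each factor $X_i$ via Fintushel's legally-weighted-$3$-manifold classification, but the paper explicitly contrasts its method with Fintushel's, and for good reason: a legal weighting encodes the Euler class and Seifert data of the action, and there is nothing in $\pi_1(X_i)$ alone that a priori bounds these. The paper instead proves Theorem 1.5 using the Lee--Raymond rigidity of the injective Seifert fibered space construction, which converts an isomorphism of $\pi_1$'s (respecting the central extension) directly into a fiber-preserving diffeomorphism -- this is what controls the factors without reference to weight data. Even then, one must still bound the number of singular circles of each $Y_i$ (via the uniqueness of the base orbifold, plus Dunbar's classification in the spherical case) and the multiplicity of the regular-fiber class $h$ in $z(\pi_1(X_i))$ (using that $z(G)\subset\pi_1(N_j)$ for every edge, so this multiplicity is bounded by the multiplicity of the singular circle met by some $\Sigma_j$; Lemma 6.4 is needed precisely here). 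These steps have no counterpart in your sketch. Finally, a minor factual slip: the fiber-sum gluing is performed along $S^1\times S^2$ (the boundary of a tubular neighborhood of a circle fiber), not along $T^2$; the finiteness of gluing ambiguities comes from the fiber-preserving requirement in Definition 1.3 (see Remark (1) following it), not from diffeomorphisms of $T^2$.
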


Our approach to equivariant classification (resp. classification) of fixed-point free $\s^1$-four-manifolds
differs from the traditional approach of legally weighted $3$-manifolds (cf. Fintushel \cite{F0,F1,F2}),
where in our method geometric group theory played a prominent role. The central notion in our 
approach is a certain decomposition of the $\s^1$-manifolds which are called a {\it fiber-sum 
decomposition} (see Definition 1.3). Each such a decomposition gives rise to a Z-splitting of the 
fundamental group of the manifold, and the central result of this paper states that the Z-splitting is a 
canonical JSJ decomposition of the fundamental group in the sense of Rips and Sela \cite{RipS} 
(see Theorem 1.5). We shall also point out that the methods of this paper are also essentially different
from those in Hillman \cite{Hillman}, where homotopy/homeomorphism classifications of $\s^1$-bundles
over certain $3$-manifolds are given. In particular, the diffeomorphism classification result in 
Theorem 1.2 is not accessible by the surgery theoretic techniques employed in Hillman \cite{Hillman}. 

The orbit map of a fixed-point free $\s^1$-action on an orientable $4$-manifold defines 
a Seifert-type $\s^1$-fibration of the $4$-manifold, giving the orbit space a structure of a closed, 
orientable $3$-dimensional orbifold whose singular set consists of a disjoint union of embedded circles,
called {\it singular circles}. (Equivalently, the $4$-manifold is the total space of a principal $\s^1$-bundle 
over the $3$-orbifold.) With this understood, 
the building blocks of a fiber-sum decomposition are oriented fixed-point free 
$\s^1$-four-manifolds whose corresponding orbit space is an irreducible $3$-orbifold. We shall call 
such $\s^1$-four-manifolds {\it irreducible}. Note that the orientation of the $4$-manifold determines an orientation of the base $3$-orbifold, as the fibers of the Seifert-type $\s^1$-fibration are canonically 
oriented. 

\begin{definition}(Fiber-sum decomposition)
Let $X$ be a smooth orientable $4$-manifold. 
Suppose we are given with a finite set of smooth {\it oriented} $4$-manifolds $X_i$, $i\in I$, 
with the following significance.
\begin{itemize}
\item [{(i)}] For each $i\in I$, there is a fixed-point free $\s^1$-action on $X_i$ with orbit map
$\pi_i:X_i\rightarrow Y_i$ where $Y_i$ is irreducible. 
\item [{(ii)}] There is a finite set $J$, such that for each $j\in J$, there exists a pair of distinct points
$y_{j,1}$, $y_{j,2}\in \sqcup_{i\in I} Y_i$, which have the same multiplicity if singular.
\item [{(iii)}] Let $F_{j,1},F_{j,2}$ be the fibers of 
$\sqcup_i\pi_i:\sqcup_i X_i\rightarrow \sqcup_i Y_i$ over
$y_{j,1}$, $y_{j,2}$ respectively. For each $j\in J$, there is an orientation-reversing but fiber-wise 
orientation-preserving, fiber-preserving diffeomorphism 
$\phi_j: \partial Nd(F_{j,1})\rightarrow \partial Nd(F_{j,2})$.
\item [{(iv)}] For any $i\in I$, $j\in J$, if $Y_i$ contains exactly one of the points $y_{j,1}$,
$y_{j,2}$, say $y_{j,1}\in Y_i$, then the homotopy class of the fiber $F_{j,1}$ generates a
{\it proper} subgroup of $\pi_1(X_i)$.
\end{itemize}
With the above understood, we say that $X$ admits a fiber-sum decomposition if there exists a
diffeomorphism between $X$ and the oriented $4$-manifold 
$$
\sqcup_{i\in I} X_i\setminus \sqcup_{j\in J} (Nd(F_{j,1})\sqcup Nd(F_{j,2}))/\sim \sqcup_{j\in J}\phi_j,
$$
and given such a diffeomorphism, we say that $X$ is {\it fiber-sum-decomposed} into $X_i$ along
$N_j$, where each $N_j\cong \s^1\times\s^2$ is the image of $\partial Nd(F_{j,1})$ (or equivalently,
$\partial Nd(F_{j,2})$) in $X$. Furthermore, the irreducible $\s^1$-four-manifolds $X_i$
are called the {\it factors} of the fiber-sum decomposition.
\end{definition}

\noindent{\bf Remarks: } The isotopy classification of diffeomorphisms of $\s^1\times \s^2$ is given
by $\pi_0(O(2)\times O(3)\times \Omega O(3))$, cf. Hatcher \cite{Hat}. In particular, there are two
distinct isotopy classes of homologically trivial diffeomorphisms because of the factor 
$\pi_0(\Omega O(3))=\pi_1 SO(3)=\Z_2$. However, the isotopy class of
the diffeomorphism $\phi_j:\partial Nd(F_{j,1})\rightarrow \partial Nd(F_{j,2})$ is uniquely determined 
because of the requirement of fiber-preserving.

\vspace{2mm}

It turns out that the class of $4$-manifolds which admit a fiber-sum decomposition are precisely 
the smooth, fixed-point free $\s^1$-four-manifolds whose fundamental group has infinite center. 
In order to understand this, we recall that a fixed-point free $\s^1$-action is called {\it injective} 
(and so is the corresponding $\s^1$-four-manifold), if the homotopy class of the principal orbits 
has infinite order. With this understood, note that in Definition 1.3, each $3$-orbifold $Y_i$ is 
irreducible. It follows easily that the $\s^1$-action on each $X_i$ must be injective. Moreover, 
it is clear that the $\s^1$-actions on $X_i$ descend to a fixed-point free $\s^1$-action 
on $X$, which is also injective. On the other hand, given any injective $\s^1$-action, the orbit
space (as a $3$-orbifold) admits a certain kind of spherical decompositions which are called 
{\it reduced} (see Lemma 2.4 for details), and any such a spherical decomposition naturally 
gives rise to a fiber-sum decomposition of the $4$-manifold (for more details see the proof of
Theorem 1.4). 

In summary, a $4$-manifold admits a fiber-sum decomposition if and only if it admits an injective 
fixed-point free $\s^1$-action. Note that the homotopy class of the principal orbits of the $\s^1$-action
lies in the center of the fundamental group of the $4$-manifold. In particular, the $\pi_1$ of an injective 
fixed-point free $\s^1$-four-manifold has infinite center. The converse is given in the following theorem. 

\begin{theorem}
Let $X$ be a smooth (resp. locally linear), fixed-point free $\s^1$-four-manifold whose 
fundamental group has infinite center. Then the $\s^1$-action must be injective unless 
$X$ is diffeomorphic (resp. homeomorphic) to the mapping torus of a periodic diffeomorphism 
of some elliptic $3$-manifold. As a consequence, any smooth, fixed-point free $\s^1$-four-manifold 
whose $\pi_1$ has infinite center admits a fiber-sum decomposition. 
\end{theorem}

Note that in the case where $X$ is diffeomorphic to the mapping torus of a periodic diffeomorphism 
of some elliptic $3$-manifold, $X$ admits another fixed-point free $\s^1$-action which is injective.
So in any event, the $4$-manifold admits a fiber-sum decomposition. 

We remark that the fundamental group of a smooth, fixed-point free $\s^1$-four-manifold with 
nontrivial Seiberg-Witten invariant must have infinite center, cf. \cite{C1,C2}. 

\vspace{3mm}

With the preceding understood, the main theme of this paper is to recover the fiber-sum 
decompositions of an injective $\s^1$-four-manifold from its fundamental group. The main results 
are summarized in Theorems 1.5 and 1.6 below. 

In order to describe the results, observe
that given any fiber-sum decomposition of $X$ into factors $X_i$ along $N_j$, there is an
associated finite graph of groups where the vertex groups and edge groups are given by
$\pi_1(X_i)$ and $\pi_1(N_j)$ respectively, such that $\pi_1(X)$ is isomorphic to the 
fundamental group of the graph of groups. Such a presentation of $\pi_1(X)$ is
called a {\it Z-splitting} as each edge group $\pi_1(N_j)$ is infinite cyclic. An in-depth study of 
Z-splittings of single-ended finitely generated groups was given in \cite{RipS} by Rips and Sela; 
in particular, they showed the existence of certain ``universal" Z-splittings for each single-ended 
finitely presented group, which are called {\it canonical JSJ decompositions}.

Theorem 1.5, which is the main technical result of this paper, asserts that the Z-splitting
associated to a fiber-sum-decomposition is a canonical JSJ decomposition of the fundamental 
group. 

\begin{theorem}
Let $X$ (resp. $X^\prime$) be a smooth $4$-manifold which is fiber-sum-decomposed into 
$X_i$ along $N_j$ (resp. $X^\prime_{i^\prime}$ along $N^\prime_{j^\prime}$). 
Suppose $\pi_1(X)$ (resp. $\pi_1(X^\prime)$) is single-ended and is not isomorphic to the 
fundamental group of a $2$-torus or a Klein bottle. Then the following hold.
\begin{itemize}
\item [{(1)}] The Z-splitting of $\pi_1(X)$ associated to the given fiber-sum-decomposition of 
$X$ is a canonical JSJ decomposition.\footnote{There is an annoying collapse of terminology here
as a canonical JSJ decomposition of $\pi_1(X)$ corresponds {\it not} to the JSJ decomposition of the
base $3$-orbifold, but to a reduced spherical decomposition of the $3$-orbifold, cf. Lemma 2.4.}
\item [{(2)}] Assume further that the submanifolds $N_j$, $N^\prime_{j^\prime}$ are 
null-homologous in $X$, $X^\prime$ respectively, and let 
$\alpha: \pi_1(X)\rightarrow \pi_1(X^\prime)$ 
be any isomorphism. Then after modifying the embeddings of $N_j$, $N^\prime_{j^\prime}$ by
fiber-preserving isotopies if necessary, $\alpha:  \pi_1(X)\rightarrow \pi_1(X^\prime)$ may be 
enhanced to an isomorphism between the Z-splittings of $\pi_1(X)$ and $\pi_1(X^\prime)$ 
associated to the new fiber-sum decompositions of $X$ and $X^\prime$ respectively. 
\end{itemize}
\end{theorem}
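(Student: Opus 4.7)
My plan is to treat the two parts together by first translating the fiber-sum decomposition into topological data on the base $3$-orbifold and then invoking the Rips--Sela theory \cite{RipS} of canonical JSJ decompositions. Via the orbit map $\pi\colon X\to Y$ of the induced $\s^1$-action, the base $Y$ is a closed orientable $3$-orbifold, and the fiber-sum structure on $X$ exhibits $Y$ as obtained from the irreducible orbifolds $Y_i$ by gluing along $2$-orbispheres, i.e., as a reduced spherical decomposition in the sense of Lemma 2.4. The associated graph of groups for $\pi_1(X)$ has vertex groups $\pi_1(X_i)$ and edge groups $\pi_1(N_j)\cong\Z$ generated by the fiber class, which sits as an infinite cyclic central subgroup of each $\pi_1(X_i)$.

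For part (1), to show this $\Z$-splitting is a canonical JSJ decomposition in the sense of \cite{RipS}, I would verify their defining properties: (a) every edge group gives an essential (non-trivial) splitting, and (b) every vertex group is either \emph{rigid} relative to the incident edge groups or is a \emph{maximal hanging Fuchsian} (MHF) subgroup, with the decomposition being maximal with respect to these properties. Property (a) is essentially condition (iv) of Definition 1.3: if the fiber class generated all of some vertex group $\pi_1(X_i)$, then a collar of $N_j$ in $X_i$ would be $\s^1\times D^3$ and the corresponding edge would be trivial. For (b), I would use the central extension
\[
1\to \Z\to \pi_1(X_i)\to \pi_1^{\mathrm{orb}}(Y_i)\to 1
\]
and the irreducibility of $Y_i$: any further $\Z$-splitting of $\pi_1(X_i)$ compatible with the fiber subgroup descends to a splitting of $\pi_1^{\mathrm{orb}}(Y_i)$ over a finite subgroup, which by standard orbifold arguments is represented by an essential $2$-orbisphere in $Y_i$; irreducibility rules this out except in the special Seifert cases, which are precisely the ones that produce MHF vertex groups. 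The assumptions that $\pi_1(X)$ is single-ended and not the fundamental group of a $2$-torus or a Klein bottle are exactly what is needed for the Rips--Sela canonicity to apply.

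For part (2), given any isomorphism $\alpha\colon \pi_1(X)\to \pi_1(X^\prime)$, part (1) together with the Rips--Sela uniqueness theorem produces an isomorphism between the underlying graphs of groups of the two canonical JSJ decompositions, sending vertex groups to vertex groups and edge groups to edge groups up to conjugacy. To realize this data on the nose, I would exploit the null-homology hypothesis on $N_j$ and $N^\prime_{j^\prime}$: since these $\s^1\times\s^2$'s are homologically trivial, the conjugation ambiguity inherent in the Rips--Sela output can be absorbed by fiber-preserving isotopies of the embeddings, which do not alter the fiber-sum decomposition but do adjust the inclusions $\pi_1(N_j)\hookrightarrow \pi_1(X_i)$. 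A judicious choice of such isotopies on both sides should promote the abstract graph-of-groups isomorphism to one honestly compatible with $\alpha$.

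The main obstacle, I expect, will be the rigidity analysis in (b): one needs an accurate dictionary between $\Z$-splittings of $\pi_1(X_i)$ relative to the fiber subgroup and essential $2$-orbispheres in $Y_i$, together with a complete enumeration of those irreducible $Y_i$ for which $\pi_1(X_i)$ is of MHF type and therefore features as a hanging vertex. A secondary but still nontrivial difficulty in part (2) will be showing that the null-homology condition really affords enough flexibility of embeddings to realize all the required inner adjustments through fiber-preserving isotopy, rather than by arbitrary ambient isotopy that might destroy the fiber-sum structure.
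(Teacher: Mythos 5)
There is a genuine gap in your part (2). You write that ``part (1) together with the Rips--Sela uniqueness theorem produces an isomorphism between the underlying graphs of groups of the two canonical JSJ decompositions.'' No such uniqueness theorem is available at this level of generality: as the paper notes explicitly in Remark (2) after Theorem~1.4, the uniqueness of canonical JSJ decompositions up to slidings, conjugations, and conjugations of boundary monomorphisms has been established only for torsion-free hyperbolic groups (Sela), and \emph{remains open} for single-ended finitely presented groups in general (see \cite{RipS}, p.~106). In fact, one should think of Theorem~1.4(2) as \emph{proving} this stronger uniqueness for the special class of groups arising as $\pi_1$ of injective $\s^1$-four-manifolds, precisely by exploiting the available geometry. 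The paper's proof of part (2) is therefore essentially geometric: it first interprets conjugacy classes of edge groups in terms of an equivalence relation on singular circles of the base orbifold $Y$ (Lemma~3.6, which uses the Equivariant Loop Theorem and a careful surgery argument on spherical $2$-suborbifolds), then shows (Lemma~3.7) that fiber-preserving isotopies of the $N_j$ can push all edges of a given conjugacy class to be incident to a prescribed vertex, bringing the graph into a normal form, and finally assembles the isomorphism of Z-splittings piece by piece from the $G$-equivariant bijection of Bass-Serre trees. You have correctly intuited that the null-homology (i.e., tree) hypothesis and fiber-preserving isotopies must do real work, but without Lemmas~3.6 and~3.7 the argument is missing its substance.

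Your approach to part (1) is also different from the paper's and rests on a characterization that is less straightforward than you suggest. You propose to check ``the defining properties'' of a canonical JSJ decomposition (essential edges, vertex groups rigid or MHF, maximality). But Rips--Sela's canonicity is not stated as an intrinsic checklist of this form; their Theorem~7.1 characterizes canonical JSJ decompositions up to the equivalence that the Bass-Serre trees are $G$-homotopy equivalent relative to vertex sets, and that is what the paper verifies (Proposition~3.5(1), building on Lemmas~3.2--3.4). The crucial structural input that you do not use is Lemma~3.1(ii): under the standing hypotheses, $\pi_1(X)$ admits \emph{no} hyperbolic-hyperbolic elementary Z-splittings, which is what makes the whole Rips--Sela machine collapse to something tractable here (in particular, the QH/hanging-Fuchsian phenomena you allude to essentially do not arise). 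Lemma~3.3 then rules out the alternatives of Theorem~4.12 of~\cite{DD} (no ascending chain, no further splitting over a finite group) by using the irreducibility of the $Y_i$, giving that each vertex stabilizer of $T$ fixes a vertex of $T_{JSJ}$ and vice versa; this is not captured by your central-extension heuristic. Your verification of property~(a) from Definition~1.3(iv) is in the right spirit, but the bulk of the argument is the $G$-equivariant identification of the trees, which your proposal does not reach.
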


\noindent{\bf Remarks}
(1) Canonical JSJ decompositions are not unique as Z-splittings. Nevertheless, 
Theorem 1.5(1) implies that the number of factors $X_i$,  the number of submanifolds $N_j$, 
as well as the conjugacy classes of subgroups $\pi_1(X_i)$ and $\pi_1(N_j)$, depend only on 
$\pi_1(X)$ (see Proposition 3.5 for details). We shall also point out that in the course of
the proof of Theorem 1.5, the group $\pi_1(X)$ is shown to have the property that it admits 
no hyperbolic-hyperbolic elementary Z-splittings (cf. Lemma 3.1). 

(2) The stronger uniqueness in Theorem 1.5(2) corresponds to the uniqueness of canonical 
JSJ decompositions up to a sequence of {\it slidings, conjugations, and conjugations of boundary monomorphisms}. Such uniqueness has been established for torsion-free (Gromov) hyperbolic groups 
(cf. Sela \cite{Sela}, Theorem 1.7), but remains open in general for single-ended finitely presented 
groups (see \cite{RipS}, page 106).

(3) The assumption that the submanifolds $N_j$ are null-homologous in $X$ is equivalent to 
that the underlying graph of the associated Z-splitting of $\pi_1(X)$ is a tree. By Theorem 1.5(1),
this assumption depends only on the group $\pi_1(X)$.

(4) It is worth pointing out that the consideration in this paper provides an almost ideal setting
for the need for developing the algebraic theory of Rips and Sela on Z-splittings  
of single-ended finitely presented groups \cite{RipS}.

\vspace{2mm}

The next theorem, Theorem 1.6, is concerned with the building blocks of fiber-sum decompositions. 
In particular, it is shown that in most of the cases the diffeomorphism class of an irreducible 
$\s^1$-four-manifold is determined by the fundamental group. To state the result, we remark that a 
finitely generated group with infinite center is either single-ended or double-ended, cf. Lemma 4.1.

\begin{theorem}
Let $X, X^\prime$ be irreducible $\s^1$-four-manifolds, and let 
$\alpha: \pi_1(X)\rightarrow \pi_1(X^\prime)$ be any isomorphism. 
\begin{itemize}
\item [{(1)}] If $\pi_1(X), \pi_1(X^\prime)$ are single-ended, then there exists a diffeomorphism 
$\phi: X\rightarrow X^\prime$ such that $\phi_\ast =\alpha: \pi_1(X)\rightarrow \pi_1(X^\prime)$.
\item [{(2)}] If $\pi_1(X), \pi_1(X^\prime)$ are double-ended, then $X, X^\prime$ are the 
mapping-torus of a periodic diffeomorphism of an elliptic $3$-manifold. Moreover, 
there exists a diffeomorphism 
$\phi: X\rightarrow X^\prime$ such that $\phi_\ast =\alpha: \pi_1(X)\rightarrow \pi_1(X^\prime)$, if
the elliptic $3$-manifold is not a lens space.
\end{itemize}
\end{theorem}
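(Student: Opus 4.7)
My plan is to handle both parts by analyzing the principal $\s^1$-bundle structure $\pi:X\to Y$ of an irreducible $\s^1$-four-manifold. Injectivity supplies the central extension
\[
1 \to \langle\tau\rangle \to \pi_1(X) \to \pi_1^{orb}(Y) \to 1,
\]
where $\tau$ is the class of a principal orbit. The orbifold fundamental group $\pi_1^{orb}(Y)$ is infinite exactly when $\pi_1(X)$ is single-ended and finite exactly when $\pi_1(X)$ is double-ended, so the theorem's dichotomy matches a dichotomy for $Y$.

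For part (1), I would first verify that under $\alpha$ the infinite cyclic central subgroup $\langle\tau\rangle$ is carried to a subgroup commensurable with $\langle\tau'\rangle$, using that the center is characteristic and that cyclic subgroups of finite index in the center are all commensurable. After replacing $\tau,\tau'$ by compatible positive multiples, $\alpha$ then descends to an isomorphism $\bar\alpha:\pi_1^{orb}(Y)\to\pi_1^{orb}(Y')$. Since $Y,Y'$ are irreducible orientable $3$-orbifolds with infinite orbifold fundamental group, they are aspherical by the orbifold sphere theorem. I would next invoke the orbifold geometrization theorem together with the rigidity package for aspherical $3$-orbifolds, namely Mostow rigidity for hyperbolic pieces, the Seifert fibered space theorem for Seifert pieces, and the orbifold analog of Waldhausen's theorem for Haken assembly, to realize $\bar\alpha$ by an orientation-preserving orbifold diffeomorphism $h:Y\to Y'$. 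The Euler class $e(X)\in H^2_{orb}(Y;\Z)$ of the $\s^1$-bundle is recorded by the extension class of $\pi_1(X)$, so the compatibility of $\alpha$ with both extensions forces $h^* e(X')=e(X)$; this allows $h$ to be lifted to a principal $\s^1$-bundle isomorphism $\phi:X\to X'$, and a final adjustment by a fiber rotation makes $\phi_*=\alpha$.

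For part (2), $\pi_1(X)$ is virtually $\Z$, which forces $\pi_1^{orb}(Y)$ to be a finite group $\Gamma$. By orbifold geometrization, $Y\cong\s^3/\Gamma$ is a spherical orbifold. Pulling the $\s^1$-bundle back to $\s^3$ gives a trivial bundle (as $H^2(\s^3;\Z)=0$), so $X=(\s^1\times\s^3)/\Gamma$ with $\gamma\cdot(\theta,x)=(\theta+c(\gamma),\gamma x)$ for a homomorphism $c:\Gamma\to\s^1$. Setting $K=\ker c$, freeness of the $\Gamma$-action on $\s^1\times\s^3$ forces $K$ to act freely on $\s^3$, so $M:=\s^3/K$ is an elliptic $3$-manifold and $X=(M\times\s^1)/C_n$ with $C_n:=\Gamma/K$ cyclic, exhibiting $X$ as the mapping torus of a periodic isometry of $M$. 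When $M$ is not a lens space, the realization of $\alpha$ would rest on the classical rigidity of non-lens elliptic $3$-manifolds (de Rham, Orlik, McCullough--Rubinstein), by which every $\pi_1$-isomorphism is realized by a diffeomorphism; together with the cyclic mapping-torus data, carried across by $\alpha$ via the outer action of the quotient $\pi_1(X)/\pi_1(M)$, this assembles a diffeomorphism $\phi:X\to X'$ realizing $\alpha$.

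The most delicate ingredient is the rigidity input in part (1): realizing $\bar\alpha$ as an actual orbifold diffeomorphism rather than a mere homotopy equivalence, particularly in the presence of a nontrivial singular locus, and then cleanly passing from the base to a principal bundle isomorphism of total spaces. The lens space exception in part (2) is essential and unavoidable, since $L(p,q)$ and $L(p,q')$ typically fail to be diffeomorphic despite sharing $\pi_1\cong\Z/p$, which is precisely the obstruction singled out in the statement.
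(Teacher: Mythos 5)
Your outline for part (2) and for the rank-one case of part (1) is essentially the paper's argument, with one cosmetic difference: where you descend $\alpha$ to the base, realize it by an orbifold diffeomorphism, and then match Euler classes and lift by hand, the paper packages the descent/lift step using the Lee--Raymond rigidity theorem for injective Seifert fibered space constructions (\cite{LR0}); the key geometric input --- realizing an isomorphism of $\pi_1^{orb}$ of an aspherical $3$-orbifold by a diffeomorphism, via Mostow rigidity together with McCullough--Miller for Haken/Seifert pieces --- is identical. Your derivation in part (2) that $X$ is a flat $\s^1$-bundle over a spherical $3$-orbifold, hence a mapping torus over $\s^3/K$, matches the paper's, as does the final isotopy step for non-lens elliptic manifolds.

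However, there is a genuine gap in part (1). Your opening move --- ``$\alpha$ carries $\langle\tau\rangle$ to a subgroup commensurable with $\langle\tau'\rangle$, since cyclic subgroups of finite index in the center are all commensurable'' --- tacitly assumes $\langle\tau\rangle$ has finite index in $z(\pi_1(X))$, i.e.\ that $\operatorname{rank}z(\pi_1(X))=1$. This is false in general for irreducible single-ended $\s^1$-four-manifolds: $T^4$ (over $Y=T^3$) has $\operatorname{rank}z=4$, and $\s^1\times N^3/G$ with $N^3$ an irreducible Seifert $3$-manifold has $\operatorname{rank}z=2$. In these cases $\langle\tau\rangle$ has infinite index in the center, and an arbitrary isomorphism $\alpha$ need not take $\langle\tau\rangle$ into a subgroup commensurable with $\langle\tau'\rangle$ (e.g.\ a coordinate swap on $\Z^4$), so $\alpha$ does not descend to the $3$-orbifold bases and your argument stalls. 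The paper handles the higher-rank case separately (Theorem 4.3): when $\operatorname{rank}z(\pi_1(X))>1$, the base $Y$ is Seifert fibered and the orbit map extends to a Seifert-type $T^2$-fibration over a $2$-orbifold $B$ (Lemma 2.6); one then applies Lee--Raymond rigidity with $k=2$, $W=\R^2$, using that $\alpha$ preserves the full center $\cong\Z^2$ (which \emph{is} the $T^2$-fiber class) and that $\pi_1^{orb}$-isomorphisms of $2$-orbifolds are realized by diffeomorphisms (\cite{Mac}). You would need to add this case to make the proof complete.
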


Finally, the cases which are not covered in Theorems 1.5 and 1.6, i.e., when $\pi_1(X)$ is isomorphic 
to the fundamental group of a $2$-torus or a Klein bottle, are handled separately. In particular, we direct
the reader's attention to two classification theorems of fixed-point free $\s^1$-four-manifolds. One is concerned with the situation where the center of $\pi_1$ is of rank greater than $1$, the other is about 
the situation where $\pi_1$ is isomorphic to the $\pi_1$ of a Klein bottle. See Theorems 4.3 and 6.2 
for more details.

With the preceding understood, Theorem 1.1 follows readily from Theorems 1.5 and 1.6. Theorem 1.2 also follows from Theorems 1.5 and 1.6 with the additional help of Theorems 1.4, 4.3 and 6.2. 

\vspace{2mm}

Having reviewed the main theorems, we now give a few remarks about the technical aspect
of this paper. Our arguments rely heavily on the recent advances in $3$-dimensional topology, 
particularly those centered around the resolution of Thurston's Geometrization Conjecture 
(henceforth referred to as the {\it Geometrization Theorem}, cf. \cite{BLP, P}, see also \cite{DL}). 
For instance, Lemma 5.2, which asserts that an orientable $3$-orbifold is Seifert fibered 
if $\pi_1^{orb}$ has infinite center, and furthermore, if $\pi_2^{orb}\neq 0$, it is the mapping torus of
a periodic diffeomorphism of a $2$-orbifold with finite $\pi_1^{orb}$, played a key role in the proofs 
of several theorems of this paper. The proof of this lemma involves several particular forms 
of the Geometrization Theorem, which include the earlier work of Meeks and Scott \cite{MS} 
on finite group actions on Seifert $3$-manifolds, the resolution of the Seifert Fiber Space 
Conjecture due to Gabai \cite{Gabai} (and independently Casson-Jungreis \cite{CJ}), as 
well as the more recent Orbifold Theorem of Boileau, Leeb and Porti \cite{BLP} and the 
resolution of Poincar\'{e} conjecture (cf. \cite{P}). On the other hand, as we mentioned earlier 
this paper also draws considerably from geometric group theory, particularly the work of Rips 
and Sela on Z-splittings of single-ended finitely presented groups (cf. \cite{RipS}). 

\vspace{2mm}

Before ending the introduction, we point out a corollary of Theorem 1.4 which is of independent interest.

\begin{corollary}
Let $X$ be a $4$-manifold whose fundamental group has infinite center. If $X$ admits a locally linear,
fixed-point free $\s^1$-action, then there are no embedded $2$-spheres with odd
self-intersection in $X$. In particular, $X$ is minimal. 
\end{corollary}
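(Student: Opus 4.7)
My plan is to invoke Theorem 1.6 and treat the two possible cases separately.

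In the mapping torus case $X \cong M_\phi$ with $\phi$ a periodic diffeomorphism of an elliptic $3$-manifold $Y$, the argument is short: $\pi_2(Y) = 0$ because $Y$ is covered by $\s^3$, and the fibration $Y \hookrightarrow X \to \s^1$ gives $\pi_2(X) = \pi_2(Y) = 0$. Any embedded $2$-sphere in $X$ is therefore null-homotopic, hence null-homologous, so its self-intersection is $0$, which is even.

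In the remaining case the $\s^1$-action is injective, and the plan is to pass to the universal cover $\tilde X \to X$. The $\s^1$-action lifts to an $\R$-action on $\tilde X$, and the key first step is to verify that this $\R$-action is \emph{free}. This is where injectivity enters: a fiber of multiplicity $n$ in $X$ is an embedded circle of class $\gamma \in \pi_1(X)$ satisfying $\gamma^n = t$, where $t$ is the class of a principal orbit; since $t$ has infinite order by injectivity, so does $\gamma$, and no nonzero element of $\R$ can fix a lift. Freeness together with contractibility of $\R$ then identify $\tilde X \to \tilde Y := \tilde X/\R$ with a principal $\R$-bundle over a paracompact base, hence trivial, giving $\tilde X \cong \tilde Y \times \R$. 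From the homotopy long exact sequence $\tilde Y$ is simply connected, and being an oriented $3$-manifold it is parallelizable; so $T \tilde X \cong T \tilde Y \oplus \R$ is trivial and in particular $w_2(\tilde X) = 0$.

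Now given any embedded $2$-sphere $S \subset X$, I lift it to an embedded $\tilde S \subset \tilde X$ (possible since $\pi_1(\s^2) = 0$); the self-intersection is a local invariant of the embedding, so $\tilde S \cdot \tilde S = S \cdot S$. Wu's formula on $\tilde X$ then forces $\tilde S \cdot \tilde S \equiv w_2(\tilde X) \cdot [\tilde S] = 0 \pmod 2$, so $S \cdot S$ is even. In particular $X$ contains no embedded $(-1)$-sphere and is therefore minimal. The main obstacle I anticipate is the locally linear version: one needs to know that $\s^1$-actions lift to $\R$-actions on the universal cover of a topological manifold (standard covering-space theory for connected simply connected groups), that continuous principal $\R$-bundles over paracompact bases are trivial, and that oriented topological $3$-manifolds have $w_2 = 0$ (a consequence of Moise's theorem). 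Granted these ingredients, the argument goes through verbatim in the locally linear category.
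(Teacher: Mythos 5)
Your argument is correct, and it takes a genuinely different route from the paper's. You invoke Theorem 1.6 to split into the mapping-torus case (where $\pi_2(X)=0$ immediately gives the claim) and the injective case, then pass to the \emph{universal} cover $\tilde X$: the lifted $\R$-action is free because the isotropy of a point over a multiplicity-$n$ orbit would trace out a loop $\gamma^{k}$ with $\gamma^{n}=t$ of infinite order, and it is proper, so $\tilde X$ is a trivial $\R$-bundle over a simply connected open $3$-manifold and hence spin; the conclusion follows by lifting the sphere and reading off $\tilde S\cdot\tilde S = e(\nu)\equiv \langle w_2(T\tilde X),[\tilde S]\rangle = 0 \pmod 2$ from the normal-bundle decomposition. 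The paper instead passes to a compact \emph{finite} cover $\tilde X\to X$ pulled back from a manifold cover $\hat Y\to Y$ of the base orbifold (which exists by the Geometrization Theorem), identifies $\pi_2(\tilde X)\cong\pi_2(\hat Y)$, applies the Sphere Theorem in $\hat Y$ to produce an embedded $\Sigma$ on which the Euler class of the Seifert fibration vanishes, and then uses a homological decomposition from Baldridge's work to write the sphere's class as $[\Sigma']+\sum_i[T_i]$ (a section plus torus preimages of loops), all with square zero. Your approach dispenses with the Sphere Theorem and the homological decomposition in favor of a characteristic-class computation on the universal cover; the price is working with a non-compact cover and having to justify properness of the lifted $\R$-action, whereas the paper's finite cover stays compact and needs no such discussion. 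The one step you should spell out more carefully is that $\tilde X/\R$ is a Hausdorff $3$-manifold: this is exactly where injectivity enters a second time, since for a slice neighborhood $U\cong\s^1\times_{\Z_n}D^3$ of an orbit one has $\pi_1(U)=\langle\gamma\rangle\hookrightarrow\pi_1(X)$, so each component of $q^{-1}(U)$ in the universal cover is $\R\times D^3$ with $\R$ acting by translation, giving the required local triviality and Hausdorffness.
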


We end the introduction with the following questions, which are naturally suggested by the results 
of this paper (see \cite{Sw,Thom,Turaev, MT, Turaev1} for some relevant problems and results in 
dimension three). 

\begin{question}
Let $X$ be an oriented, smooth, fixed-point free $\s^1$-four-manifold whose fundamental group 
has infinite center.

(1) Is the diffeomorphism type of $X$ determined by its homeomorphism type?

(2) Can one express the Seiberg-Witten invariant of $X$ in terms of topological invariants 
of the manifold?
\end{question} 

\vspace{3mm}

The organization of the rest of the paper is as follows. In Section 2, we first review some
basic definitions and facts about $2$-orbifolds and $3$-orbifolds, and then we prove several
preliminary lemmas which will be used in later sections. Section 3 is devoted to the proof of 
Theorem 1.5, which begins with a brief review of the Bass-Serre theory of groups acting 
on trees (in particular, the definition of graph of groups and its fundamental group), as well 
as a review on the relevant part of the work of Rips and Sela in \cite{RipS} concerning 
Z-splittings of single-ended finitely presented groups. The proof of Theorem 1.6 is given in 
Section 4, so is the classification of fixed-point free $\s^1$-four-manifolds 
whose $\pi_1$ has a center of rank greater than $1$. Section 5 is devoted to 
Theorem 1.4; in particular, we prove the key lemma, Lemma 5.2, in this section. Corollary 1.7 
asserting minimality of injective $\s^1$-four-manifolds is proven here as well. Section 6 contains 
the proofs of Theorems 1.1 and 1.2, as well as the classification of fixed-point free 
$\s^1$-four-manifolds whose $\pi_1$ is isomorphic to the $\pi_1$ of a Klein bottle.

Throughout this paper, we shall adopt the following notation: the center of a group $G$ is 
denoted by $z(G)$.

\section{Recollections and preliminary lemmas}

For the reader's convenience, we shall begin by giving a brief review on the relevant definitions and 
basic facts about $2$-orbifolds and $3$-orbifolds (for more details, see e.g. \cite{Sct, BMP}). Recall 
first that an orbifold (not necessarily orientable) is called {\it good} if it is the quotient of a manifold by a properly discontinuous action of a discrete group; otherwise it is call {\it bad}. It is called {\it very good} 
if it is the quotient of a manifold by a finite group action. All orbifolds are assumed to be connected and closed (i.e., compact without boundary) unless mentioned otherwise.

An orientable $2$-orbifold is given by a closed orientable surface as the underlying space, with isolated singular points where the local groups are cyclic, generated by a rotation.  
For a non-orientable $2$-orbifold, if the underlying space has a nonempty boundary, 
the singular set will also contain the boundary of the underlying space, which is a polygon
with local groups being either a reflection through a line in $\R^2$ or a dihedral group $D_{2n}$ generated by two reflections through lines making an angle $\pi/n$. With this understood, a 
{\it teardrop} is a $2$-sphere with one singular point. A {\it spindle} is a $2$-sphere with two 
singular points of different multiplicities (i.e., the orders of the local groups). A {\it football}
is a $2$-sphere with two singular points of the same multiplicity. A {\it turnover} is a $2$-sphere with 
three singular points. Except for a teardrop or a spindle, all orientable $2$-orbifolds are very good.
An orientable $2$-orbifold is called {\it spherical} (resp. {\it toric}, resp. {\it hyperbolic}) if it is the quotient of a $2$-sphere (resp. $2$-torus, resp. closed surface of genus $>1$) by a finite group. 
A $2$-orbifold is spherical if and only if it is either a nonsingular sphere, a football, or a turnover with
multiplicities $(2,2,n)$, $(2,3,3)$, $(2,3,4)$, or $(2,3,5)$. The turnovers correspond to the quotient of 
$2$-sphere by the action of a dihedral group $D_{2n}$ or one of the platonic groups $T_{12}$, $O_{24}$, $I_{60}$. 

All $2$-suborbifolds in a $3$-orbifold are assumed to be orientable.  
There is a special class of $3$-orbifolds which are important for the considerations in this paper;
these are the $3$-orbifolds which does not contain any bad $2$-suborbifolds. It is a consequence of
the Geometrization Theorem (cf. \cite{BLP, MM}) that if a $3$-orbifold does not contain any bad 
$2$-suborbifolds, then it must be very good, i.e., it is the quotient of a $3$-manifold by a finite group 
action. For simplicity, we shall call such a $3$-orbifold {\it good}. 

An orientable $3$-orbifold (with or without boundary) is called {\it spherical} (resp. {\it discal}) if it is the quotient of the $3$-sphere (resp. $3$-ball) by a finite isometry group.  A good $3$-orbifold is called {\it irreducible} if every spherical $2$-suborbifold bounds a discal $3$-orbifold. An irreducible $3$-orbifold is called {\it atoroidal} if it contains no essential toric $2$-suborbifold. A $3$-orbifold (not necessarily 
orientable) is called {\it Seifert fibered} if it is the total space of an orbifold bundle over a $2$-orbifold (not necessarily orientable) with generic fiber a circle or a mirrored interval. (A mirrored interval is the quotient of a circle by an orientation-reversing involution.) It is easily seen that a generic fiber of an orientable Seifert fibered $3$-orbifold must be a circle. Moreover, if the base $2$-orbifold is orientable, then the singular set 
of the Seifert fibered $3$-orbifold must consist of a union of fibers. 

The rest of this section is occupied by a number of preliminary lemmas.
The following lemma about the center of an amalgamated product or an HNN extension 
is well known to the experts. However, for the sake of completeness, we include a statement and
a proof of the lemma here. 

\begin{lemma}
(1) If $A\neq C\neq B$, then the center of $A \ast_C B$ is contained in $C$. 

(2) Let $C\subset A$ be a subgroup and $\alpha:C\rightarrow A$ be an injective homomorphism, 
and let $A\ast_C\alpha$ denote the corresponding HNN extension. Suppose $x\in z(A\ast_C\alpha)$. 
Then either $x\in C$, or $x$ is non-torsion, $C=A=\alpha(C)$, and $A\ast_C\alpha$ is isomorphic to 
$A\ast_C\alpha^\prime$ for some $\alpha^\prime: A\rightarrow A$ which is of finite order.
\end{lemma}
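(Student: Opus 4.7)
The plan is to apply Bass--Serre theory in both parts. In each case $G$ acts on its Bass--Serre tree $T$ with vertex stabilizers conjugate to the factor groups and edge stabilizers conjugate to $C$; every $g\in G$ is either elliptic (fixes a vertex) or hyperbolic (translates along its unique axis). For $x\in z(G)$, centrality forces the fixed-point set (resp.\ axis) of $x$ to be $G$-invariant, which is the key constraint we exploit.

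For (1), with $G=A\ast_C B$ and $A\ne C\ne B$, I would first rule out that $x$ is hyperbolic. Its axis $\ell$ would be $G$-invariant; if $[A:C]\geq 3$ or $[B:C]\geq 3$ some vertex has valence $\geq 3$ and cannot sit on a $G$-invariant line. Otherwise $[A:C]=[B:C]=2$ and $T=\ell$; but then any $a\in A\setminus C$ swaps the two edges at $v_A$ and acts on $\ell$ as a reflection, so conjugation by $a$ inverts the translation of $x$, contradicting $x=axa^{-1}$. Hence $x$ is elliptic; after conjugation, $x$ fixes $v_A$ and so $x\in A$. Picking $b\in B\setminus C$, centrality forces $x$ to fix $bv_A$, hence to fix the whole path $v_A\to v_B\to bv_A$; in particular $x$ fixes the edge $e_0$ between $v_A$ and $v_B$, so $x\in\mathrm{Stab}(e_0)=A\cap B=C$.

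For (2), with $G=A\ast_C\alpha$, $T$ has a single $G$-orbit of vertices. If $x$ is elliptic fixing $v_A$, then $x\in A$, and centrality forces $x$ also to fix $tv_A$ and $t^{-1}v_A$, hence to fix the two incident edges. Britton's lemma identifies the corresponding stabilizers as $\alpha(C)$ and $C$, giving $x\in C\cap\alpha(C)\subseteq C$. If $x$ is hyperbolic, its axis $\ell$ is $G$-invariant, and since all vertices lie in one orbit, $T=\ell$; this forces every vertex to have valence $2$, i.e.\ $[A:C]=[A:\alpha(C)]=1$, so $C=A=\alpha(C)$ and $\alpha\in\mathrm{Aut}(A)$. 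Then $G=A\rtimes_\alpha\langle t\rangle$; since $x$ is hyperbolic it is non-torsion, and writing $x=at^n$ the centrality relations reduce to $\alpha(a)=a$ and $\alpha^n=\mathrm{inn}(a^{-1})$.

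To produce $\alpha'$ of finite order realizing the same group, I would change the stable letter from $t$ to $tc$ for a suitable $c\in A$; this preserves the abstract group $G$ but replaces $\alpha$ by $\alpha'=\alpha\circ\mathrm{inn}(c)$, so $G\cong A\ast_A\alpha'$. A straightforward induction gives
\[
(\alpha\circ\mathrm{inn}(c))^n=\mathrm{inn}\bigl(\alpha(c)\alpha^2(c)\cdots\alpha^n(c)\cdot a^{-1}\bigr),
\]
so it is enough to choose $c\in A$ with $\alpha(c)\alpha^2(c)\cdots\alpha^n(c)\in a\cdot z(A)$. This product equation is the main obstacle of the lemma: the hypothesis $\alpha(a)=a$ is used essentially, placing $a$ in the $\alpha$-fixed subgroup $A^\alpha$ and allowing $c$ to be constructed by a telescoping argument on $a$ inside its $\alpha$-invariant ambient structure. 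Once $c$ is produced, $\alpha':=\alpha\circ\mathrm{inn}(c)$ has order dividing $n$, and the HNN presentation with stable letter $tc$ realizes the desired isomorphism $G\cong A\ast_A\alpha'$.
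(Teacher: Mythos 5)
Your Bass--Serre argument is a genuinely different route from the paper's, which works directly with reduced words (normal forms) for amalgamated products and HNN extensions following Scott--Wall. Recasting centrality as the $G$-invariance of $\mathrm{Fix}(x)$ or of the axis of $x$ is a clean and correct alternative. It handles part (1) completely: the valence/transitivity argument rules out hyperbolic $x$ when one of the indices $[A:C]$, $[B:C]$ is at least $3$, the reflection argument rules out $[A:C]=[B:C]=2$, and the fixed-point-set argument then yields $x\in C$. The same machinery gives the reduction in part (2) to $C=A=\alpha(C)$ with $x=at^n$ non-torsion, $\alpha(a)=a$, and $\alpha^n=\mathrm{inn}(a^{-1})$. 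All of this is correct, and is arguably a more conceptual packaging than the paper's word-by-word cancellation argument, at the price of importing Bass--Serre machinery.

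The last paragraph of part (2), however, is a genuine gap, which you yourself flag. You correctly reduce to finding $c\in A$ with
\[
\alpha(c)\,\alpha^2(c)\cdots\alpha^n(c)\in a\cdot z(A),
\]
but you do not produce such a $c$; the phrase about a ``telescoping argument on $a$ inside its $\alpha$-invariant ambient structure'' is not an argument. Nor is it evident that a solution exists: for the natural candidates $c=a^k$ (permissible since $\alpha(a)=a$) the product equals $a^{nk}$, and there is no reason that $a^{nk-1}\in z(A)$ for some $k$ once $a$ has infinite order modulo $z(A)$ and $n\ge 2$. What you actually know at this stage is only that $[\alpha]$ has finite order in $\mathrm{Out}(A)$, and producing a finite-order representative of the coset $\alpha\cdot\mathrm{Inn}(A)$ is a nontrivial lifting problem, not a routine computation. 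For context: the paper's own closing computation is also not quite right as printed --- with $\alpha'(c)=a\alpha(c)a^{-1}$ and $\alpha(a)=a$, one gets $(\alpha')^n=\mathrm{inn}(a^{n-1})$ rather than the identity, because $\mathrm{inn}(a)$ commutes with $\alpha$. So this is a genuinely delicate step, and it is precisely the one your proposal leaves open.
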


\begin{proof}
For a proof of part (1), see Magnus-Karrass-Solitar \cite{MKS}, Corollary 4.5, page 211. We shall give
a proof for part (2) here. An element of $A\ast_C\alpha$ can be uniquely represented by a reduced 
word (cf. e.g. Scott-Wall \cite{SW}). Lemma 2.1 is a direct consequence of this fact. 

More concretely, recall that the group $A\ast_C\alpha$ is generated by elements of $A$ and a letter 
$t$ with additional relations $tct^{-1}=\alpha(c)$ for all $c\in C$. We let $T$, $T_\alpha$ be the set 
of some fixed choices of representatives of the right cosets of $C$ and $\alpha(C)$ in $A$ respectively. 
Then a reduced word in $A\ast_C\alpha$ takes the following form
$$
a_1t^{\epsilon_1}a_2t^{\epsilon_2}\cdots a_nt^{\epsilon_n}a_{n+1},
$$
where $\epsilon_i=\pm 1$, $a_i\in T$ if $\epsilon_i=+1$, $a_i\in T_\alpha$ if $\epsilon_i=-1$, and 
furthermore, $a_i\neq 1$ if $\epsilon_{i-1}\neq \epsilon_i$, and $a_{n+1}$ is allowed to be an arbitrary element of $A$.

Let $x=a_1t^{\epsilon_1}a_2t^{\epsilon_2}\cdots a_nt^{\epsilon_n}a_{n+1}$
be an element of the center (here $n=0$ represents the case where $x\in A$). If $n=0$, then by $tx=xt$
it is clear that $x=a_{n+1}\in C$ which obeys $\alpha(x)=x$. Suppose $n>0$. If $a_1\neq 1$, then the
uniqueness of representation by reduced words implies that $tx\neq xt$, which is a contradiction. 
If $a_1=1$, then $t^{-\epsilon_1}x=xt^{-\epsilon_1}$ implies that $a_2=1$. Iterating this process,
we see that $x=t^l a_{n+1}$ for some $0\neq l\in\Z$. It follows from $t^{-1}x=xt^{-1}$ that 
$a_{n+1}=ta_{n+1}t^{-1}$, which implies that $a_{n+1}\in C$ and $\alpha(a_{n+1})=a_{n+1}$.
Furthermore, the commutativity of $t$ and $a_{n+1}$ also implies that $x=t^l a_{n+1}$ is non-torsion. 
To see $C=A=\alpha(C)$, note that if there is an $a\in T$ or $T_\alpha$ such that 
$a\neq 1$, then one has $ax\neq xa$ which is a contradiction. This implies that $C=A=\alpha(C)$.  
Now for any $c\in C$,
$$
t^l a_{n+1} \alpha^l(c)=x\alpha^l(c)=\alpha^l(c)x=\alpha^l(c)t^la_{n+1}=t^l ca_{n+1},
$$
which implies that $a_{n+1} \alpha^l(c)=ca_{n+1}$ for any $c\in C=A$. Let $\alpha^\prime:
A\rightarrow A$ be defined by $\alpha^\prime(c)=a_{n+1}\alpha(c)a_{n+1}^{-1}$. Then it follows from
$\alpha(a_{n+1})=a_{n+1}$ that 
$$
(\alpha^\prime)^l(c)=a_{n+1}\alpha^l(c)a_{n+1}^{-1}=c, \;\;\forall c\in A.
$$
Now note that $A\ast_C\alpha$ is isomorphic to $A\ast_C\alpha^\prime$ where $\alpha^\prime$
has finite order $l$. This completes the proof of the lemma.

\end{proof}

For our purposes in this paper, it is important to understand the center of the 
fundamental group of a $2$-orbifold or a $3$-orbifold.

\begin{lemma}
Let $\Sigma$ be a $2$-orbifold (not necessarily orientable) such that $z(\pi_1^{orb}(\Sigma))$
is nontrivial. Then the following statements hold true.
\begin{itemize}
\item [{(a)}] If $\Sigma$ is orientable, then it is either a football, a spindle with non-coprime multiplicities, a turnover with multiplicities $(2,2,2)$, or a nonsingular torus.
\item [{(b)}] If $\Sigma$ is non-orientable, then its orientable double cover $\tilde{\Sigma}$ must lie 
in the following list: a nonsingular sphere, a teardrop, a spindle, a football, a turnover with multiplicities $(2,2,2)$, or a nonsingular torus. Moreover, $z(\pi_1^{orb}(\Sigma))$ is torsion-free 
if and only if $\tilde{\Sigma}$ is a nonsingular torus. 
 \end{itemize}
\end{lemma}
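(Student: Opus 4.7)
The plan is to classify closed $2$-orbifolds by orbifold Euler characteristic together with the bad-orbifold dichotomy, and to compute $z(\pi_1^{orb})$ case by case.

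For part (a), first dispose of the bad orientable orbifolds (teardrop, spindle) via van Kampen applied to the decomposition of $S^2$ into two discal cone neighborhoods: this yields $\pi_1^{orb}(S^2(n))=1$ and $\pi_1^{orb}(S^2(p,q))=\Z/\gcd(p,q)$, isolating the spindle with non-coprime multiplicities. For good orientable orbifolds, I would split by orbifold Euler characteristic. (i) If $\Sigma$ is hyperbolic, then $\pi_1^{orb}(\Sigma)$ is a cocompact non-elementary Fuchsian group, hence has trivial center. (ii) If $\Sigma$ is Euclidean, then $\pi_1^{orb}(\Sigma)$ fits in a split extension $1\to\Z^2\to\pi_1^{orb}(\Sigma)\to F\to 1$ with $F$ the finite rotational holonomy; any central element lies in the $F$-fixed sublattice of $\Z^2$, which is trivial unless $F=1$, giving $\Sigma=T^2$. (iii) If $\Sigma$ is spherical, enumerate $S^2$, the footballs, and the turnovers $(2,2,n)$, $(2,3,3)$, $(2,3,4)$, $(2,3,5)$, and read off the centers from the corresponding cyclic, dihedral, and platonic group presentations.

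For part (b), use the orientable double cover $\tilde\Sigma$, whose orbifold fundamental group sits as an index-$2$ normal subgroup of $\pi_1^{orb}(\Sigma)$. A nontrivial central element $g\in z(\pi_1^{orb}(\Sigma))$ either lies in $\pi_1^{orb}(\tilde\Sigma)$, placing $\tilde\Sigma$ in part (a)'s list, or lies outside; in the latter case, either $g^2\in\pi_1^{orb}(\tilde\Sigma)$ is a nontrivial central element of $\pi_1^{orb}(\tilde\Sigma)$---again placing $\tilde\Sigma$ in part (a)'s list---or $g$ has order $2$. The remaining subcase forces the extension to split as a direct product $\pi_1^{orb}(\Sigma)\cong\pi_1^{orb}(\tilde\Sigma)\times\Z/2$, and combined with the orientation-reversing nature of the deck involution this restricts $\tilde\Sigma$ to the additional possibilities of a nonsingular sphere, teardrop, or spindle. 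For the torsion-freeness assertion, of the listed $\tilde\Sigma$ only $T^2$ has torsion-free $\pi_1^{orb}$; conversely, when $\tilde\Sigma=T^2$, the deck involution acts on $H_1(T^2)=\Z^2$ with determinant $-1$, so any central $g$ outside $\pi_1^{orb}(\tilde\Sigma)$ would induce this nontrivial involution by conjugation while simultaneously commuting with $\Z^2$, a contradiction; hence $z(\pi_1^{orb}(\Sigma))\subset\Z^2$ and is torsion-free.

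The main obstacle is part (b): tracking central elements through the index-$2$ extension in the non-orientable case, particularly when $\Sigma$ carries mirror boundary or corner reflectors so that $\tilde\Sigma$ must be built by doubling along mirrors and the extension analysis adjusted to handle reflection generators. A secondary subtlety in part (a) is verifying that the Euclidean triangle orbifolds and $S^2(2,2,2,2)$ have trivial center by explicit examination of the $F$-action on $\Z^2$, and that the dihedral-group center computation for the spherical turnover cases is carried out correctly.
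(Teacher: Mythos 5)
Your part (a) is correct, but it takes a genuinely different route from the paper. You invoke the structure theory of planar discrete groups directly: triviality of the center of a cocompact non-elementary Fuchsian group for the hyperbolic case, and the translation lattice/point group exact sequence $1\to\Z^2\to G\to F\to 1$ for the Euclidean case (where a central element must map into $F$ acting trivially, hence into $\Z^2$, and must be $F$-fixed). The paper instead argues geometrically in both the toric and hyperbolic cases at once, writing $\Sigma=\Sigma'/\Gamma$ and analyzing the automorphism induced by a group element with a fixed point (a central element would force that automorphism to be trivial, which is impossible when $\Sigma'$ has nonzero genus or when an orientation-preserving automorphism of $T^2$ is homologically trivial yet non-free). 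Your argument is more standard but leans on slightly heavier known facts; the paper's is more self-contained. Either is fine.

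Part (b), however, has a real gap, and it is exactly the one you flagged. Your case split on whether the central element $g$ lies in $\pi_1^{orb}(\tilde\Sigma)$ or not, and on the order of $g^2$, is an algebraic decomposition that does not line up with the geometric dichotomy that actually drives the conclusion: whether the orientation-reversing deck involution on $\tilde\Sigma$ is free (i.e., $\Sigma$ has no mirror locus) or not (i.e., $\Sigma$ has mirror boundary / corner reflectors). In your last subcase you assert, without argument, that $\pi_1^{orb}(\Sigma)\cong\pi_1^{orb}(\tilde\Sigma)\times\Z/2$ together with orientation-reversal forces $\tilde\Sigma$ to be a sphere, teardrop, or spindle. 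That conclusion is far from immediate: one must rule out $\tilde\Sigma$ toric or hyperbolic (by showing an orientation-reversing involution cannot induce a trivial outer automorphism there), and then rule out the spherical turnovers with trivial center; none of this is carried out, and none of it is handled by anything earlier in your outline. The paper sidesteps all of this by (i) treating the free case via a decomposition $|\Sigma|=(\R\P^2\setminus D^2)\cup\Sigma'$ and applying Lemma 2.1(1) to force $\pi_1^{orb}(\Sigma')$ finite, hence $\tilde\Sigma$ a sphere or football, and (ii) in the non-free case, running a Lefschetz fixed-point argument: the deck involution fixes a $1$-manifold (so Lefschetz number $0$) but acts trivially on $\pi_1(|\tilde\Sigma|)$ and as $-1$ on $H_2$, giving Lefschetz number $-2g$, forcing genus zero and then the spherical/bad enumeration. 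You would need some version of this geometric input to close the hole; the purely group-theoretic extension analysis as written does not suffice. Your torsion-freeness discussion at the end is essentially correct once the classification of $\tilde\Sigma$ is in place.
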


\begin{proof}
Suppose $\Sigma$ is orientable. If $\Sigma$ is bad, then it must be a spindle with non-coprime 
multiplicities because this is the only case where $\pi_1^{orb}(\Sigma)$ is nontrivial. 
Assume $\Sigma$ is good. If $\Sigma$ is spherical, then it must be a football or a turnover with multiplicities $(2,2,2)$, because the other groups, i.e., $D_{2n}$ with $n\neq 2$, $T_{12}$, 
$O_{24}$, $I_{60}$, all have trivial center. If $\Sigma$ is toric, then it must be a nonsingular torus because the fundamental group of a toric turnover is centerless. Finally, $\Sigma$ can not be hyperbolic because a co-compact Fuchsian group has trivial center. 

Suppose $\Sigma$ is non-orientable, and let $\tilde{\Sigma}$ be the orientable $2$-orbifold which doubly covers $\Sigma$. Note that $\Z_2$ acts on $\tilde{\Sigma}$ via deck transformations. We shall discuss the proof according to (i) the deck transformations are free, (ii) the deck transformations are not free.

In case (i), the underlying space $|\Sigma|$ is a non-orientable, closed surface. We can decompose 
$|\Sigma|$ as the union of $\R\P^2\setminus D^2$ and an orientable surface with one boundary component along their boundaries. Correspondingly, we have a decomposition of $\Sigma$ as the union of (nonsingular) $\R\P^2\setminus D^2$ and an orientable $2$-orbifold $\Sigma^\prime$ with
one boundary component. It follows that $z(\pi_1^{orb}(\Sigma))$ being nontrivial forces $\pi_1^{orb}(\Sigma^\prime)$ to be finite (cf. Lemma 2.1(1) and Lemma 2.2(a)), so that $\Sigma^\prime$ must be either a (nonsingular) $D^2$ or $D^2/\Z_m$ with $m>1$. This shows that the double cover $\tilde{\Sigma}$ is either a (nonsingular) sphere or a football.

In case (ii), if $z(\pi_1^{orb}(\tilde{\Sigma}))$ is nontrivial, then we are done by part (a).
Moreover, if $\tilde{\Sigma}$ is a nonsingular torus, the fixed-point set of the deck transformation consists of a union of circles. Since the deck transformation is orientation-reversing, the Lefschetz fixed-point theorem implies that the action on $H_1(\tilde{\Sigma};\R)$ must have eigenvalues $+1$ and $-1$. It follows then that $z(\pi_1^{orb}(\Sigma))=\Z$ in this case. If 
$z(\pi_1^{orb}(\tilde{\Sigma}))$ is trivial, then $z(\pi_1^{orb}(\Sigma))=\Z_2$ and acts on 
$\tilde{\Sigma}$ via deck transformations. Let $p$ be a fixed-point of the deck transformation. 
Since $\pi_1^{orb}(\tilde{\Sigma})\rightarrow \pi_1(|\tilde{\Sigma}|)$ 
is surjective, the induced action of $z(\pi_1^{orb}(\Sigma))=\Z_2$ on $\pi_1(|\tilde{\Sigma}|,p)$ must be trivial. This implies that the Lefschetz number of the action of $z(\pi_1^{orb}(\Sigma))=\Z_2$ on 
$|\tilde{\Sigma}|$ equals $-2$ times the genus of $|\tilde{\Sigma}|$. The Lefschetz fixed-point 
theorem then implies that $|\tilde{\Sigma}|$ has genus zero. If $\tilde{\Sigma}$ is bad, then clearly 
we are done. If $\tilde{\Sigma}$ is good, then it is the quotient of an orientable closed surface 
$\Sigma^\prime$ by a finite group. Note that $z(\pi_1^{orb}(\Sigma))=\Z_2$ also acts on 
$\Sigma^\prime$ via deck transformations which is orientation-reversing. The same argument as above shows that $\Sigma^\prime$ must have genus zero. In other words, $\tilde{\Sigma}$ is spherical. It follows easily that it must be either a (nonsingular) sphere, a football or a turnover with multiplicities $(2,2,2)$. (In fact $\tilde{\Sigma}$ is a sphere because we assume $z(\pi_1^{orb}(\tilde{\Sigma}))$ is trivial.) Hence the lemma.

\end{proof}

\begin{lemma}
Let $Y$ be an irreducible $3$-orbifold with infinite $\pi_1^{orb}(Y)$. Then $z(\pi_1^{orb}(Y))$
is torsion-free.
\end{lemma}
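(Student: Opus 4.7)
The strategy is to argue by contradiction, assuming $x \in z(\pi_1^{orb}(Y))$ is a nontrivial torsion element, and to invoke the Orbifold Geometrization Theorem of Boileau-Leeb-Porti \cite{BLP} together with the Poincar\'{e} Conjecture \cite{P} to reduce to three cases according to the geometric structure of $Y$: (i) $Y$ is hyperbolic, (ii) $Y$ is Seifert fibered, or (iii) $Y$ admits a nontrivial JSJ decomposition along essential toric $2$-suborbifolds. After replacing $x$ by a suitable power, I may assume $x$ has prime order $p$; if $Y$ is non-orientable, I would first pass to its orientation double cover, checking that irreducibility and infiniteness of $\pi_1^{orb}$ survive, and that either $x$ or $x^2$ remains a nontrivial torsion central element.

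In case (i), $\pi_1^{orb}(Y)$ is a discrete cocompact subgroup of $\mathrm{Isom}(\mathbb{H}^3)$ and $x$ is an elliptic isometry fixing a geodesic axis $L$; centrality of $x$ forces the entire group to preserve $L$ setwise, so $\pi_1^{orb}(Y)$ embeds in the $2$-dimensional stabilizer of $L$, contradicting cocompactness on $\mathbb{H}^3$. In case (ii), the Seifert structure yields a short exact sequence $1 \to \langle f \rangle \to \pi_1^{orb}(Y) \to \pi_1^{orb}(B) \to 1$ (after passing to an orientable base $B$ via a further double cover if needed), with $\langle f \rangle \cong \mathbb{Z}$ central, generated by a regular fiber. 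Then $x$ projects to a torsion central element $\bar{x} \in \pi_1^{orb}(B)$, and Lemma 2.2(a) restricts $B$ to a football, a bad spindle, a $(2,2,2)$-turnover, or a nonsingular torus. In the torus case $z(\pi_1^{orb}(B)) = \mathbb{Z}^2$ is torsion-free, forcing $\bar{x} = 1$ and hence $x \in \langle f \rangle$, so $x = 1$; in the remaining cases $\pi_1^{orb}(B)$ is finite, so $\pi_1^{orb}(Y)$ is virtually $\mathbb{Z}$, and a finite torsion-free cyclic cover would be $S^2 \times S^1$ by Geometrization of $3$-manifolds, contradicting irreducibility (preserved under finite covers by the equivariant sphere theorem).

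In case (iii), the JSJ decomposition presents $\pi_1^{orb}(Y)$ as the fundamental group of a nontrivial graph of groups whose edge groups are $C = \pi_1^{orb}(T)$ for $T$ a closed orientable toric $2$-suborbifold. Applying Lemma 2.1 edge by edge and ruling out the exceptional non-torsion HNN possibility (since $x$ is torsion), one finds $x \in C$ after conjugation, and centrality of $x$ in $\pi_1^{orb}(Y)$ then forces $x \in z(C)$. A direct case analysis of the five closed orientable Euclidean $2$-orbifolds (the torus, the $2$-sphere with four $\mathbb{Z}/2$ cone points, and the turnovers with multiplicities $(2,3,6)$, $(2,4,4)$, $(3,3,3)$, whose $\pi_1^{orb}$ are the orientation-preserving $2$-dimensional Bieberbach groups) shows that $z(\pi_1^{orb}(T))$ is always torsion-free: it equals $\mathbb{Z}^2$ for the torus and is trivial in the other four cases (the point group acts on the translation lattice with only $0$ as fixed vector). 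Hence $x = 1$, a contradiction. The main technical obstacle is justifying the trichotomy (i)--(iii) using the full strength of the Orbifold Geometrization Theorem under the standing hypotheses, and a secondary subtlety is the careful passage to the orientation cover at the outset, particularly when $p = 2$ and $x$ is orientation-reversing, where one must directly rule out a flat-orbifold structure producing a torsion central involution.
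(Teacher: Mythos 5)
Your proposal follows essentially the same strategy as the paper's proof: reduce to geometric cases via the Orbifold Geometrization Theorem, apply Lemma 2.1 to push a torsion central element into edge groups of a graph-of-groups decomposition, apply Lemma 2.2 to the base of a Seifert fibration or to toric $2$-suborbifolds, and use rigidity to kill the element in the hyperbolic case. However, you have correctly identified the gaps yourself, and they are real. Your trichotomy ``hyperbolic / Seifert fibered / nontrivial JSJ'' omits the case of a closed Euclidean $3$-orbifold that is atoroidal. The paper handles this explicitly as a fourth case: by the Bieberbach theorem $Y$ is finitely covered by $T^3$, a torsion central $x$ must have a fixed point in $T^3$ (the average of an orbit under a finite cyclic action is fixed), centrality forces $x_*$ to act trivially on $\pi_1(T^3)$, and a finite-order isometry of $T^3$ with a fixed point and trivial action on $\pi_1$ is the identity. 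Without this case your proof is incomplete, since it is not automatic (nor needed) that every closed Euclidean $3$-orbifold is Seifert fibered.

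For the Seifert fibered case, your argument is more roundabout than the paper's. The paper notes that irreducibility gives $\pi_2^{orb}(Y)=0$, and the homotopy exact sequence of the Seifert fibration (with $C=\mathbb{Z}$ since $\pi_1^{orb}(Y)$ is infinite and the generic fiber is a circle) forces $\pi_2^{orb}(B)=0$; by Lemma 2.2 this immediately makes $z(\pi_1^{orb}(B))$ torsion-free, and one concludes directly without passing to covers. Your alternative of passing to orientable covers and arguing that a virtually-$\mathbb{Z}$ group would yield a finite cover $\s^1\times\s^2$ works in principle, but requires verifying that the cover is a manifold (via very-goodness from geometrization), that irreducibility passes to finite orbifold covers (equivariant sphere theorem), and it introduces exactly the ``orientation-reversing $\mathbb{Z}/2$'' subtlety you flag, which the paper avoids entirely by using Lemma 2.2(b) for non-orientable bases. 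Your hyperbolic argument by axis-preservation is a valid and arguably more elementary alternative to the paper's use of Mostow rigidity; both work. In short: the route is the same, your hyperbolic and JSJ cases are fine, but you must add the Euclidean atoroidal case (Bieberbach) and should replace the Seifert-case cover-passing argument by the direct $\pi_2^{orb}(B)=0$ observation to close the gaps you noted.
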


\begin{proof}
By the JSJ-decomposition theorem for $3$-orbifolds (cf. \cite{BMP}, Theorem 3.3), there is a finite
collection (possibly empty) of disjoint, essential toric $2$-suborbifolds $\Sigma_j$, $j=1,2,\cdots,m$, 
which split $Y$ into $3$-suborbifolds $Y_i$, $i=1,2,\cdots,n$, such that each $Y_i$ is either Seifert fibered or atoroidal. This presents $\pi_1^{orb}(Y)$ as the fundamental group of a finite graph of groups, where the vertex groups are $\pi_1^{orb}(Y_i)$ and the edge groups are $\pi_1^{orb}(\Sigma_j)$. If $\{\Sigma_j\}$ is not empty, then the torsion part of $z(\pi_1^{orb}(Y))$ must lie in the edge groups 
$z(\pi_1^{orb}(\Sigma_j))$ (cf. Lemma 2.1). By Lemma 2.2(a), 
$z(\pi_1^{orb}(\Sigma_j))$ is torsion-free, which implies that $z(\pi_1^{orb}(Y))$ is torsion-free 
when $\{\Sigma_j\}$ is not empty.

Suppose $\{\Sigma_j\}$ is empty. Then $Y$ is either Seifert fibered or atoroidal. Assume $Y$ is 
Seifert fibered first, and let $\pi: Y\rightarrow B$ be a Seifert fibration. There is an induced exact sequence (cf. \cite{BMP}, Proposition 2.12)
$$
1 \rightarrow C \rightarrow\pi_1^{orb}(Y)\stackrel{\pi_\ast}{\rightarrow}\pi_1^{orb}(B)\rightarrow 1,
$$
where $C$ is cyclic or dihedral (either finite or infinite). In addition, $C$ is finite if and only if 
$\pi_1^{orb}(Y)$ is finite. At the present case $Y$ has only $1$-dimensional singular set, so that 
a generic fiber of $\pi$ must be a circle. Consequently, $C$ is cyclic in the above exact sequence. Since 
$\pi_1^{orb}(Y)$ is infinite, we have $C=\Z$. On the other hand, $C=\pi_1(\s^1)/
\text{Image }\delta$, where $\delta:\pi_2^{orb}(B)\rightarrow \pi_1(\s^1)$ is the connecting homomorphism in the exact sequence of homotopy groups associated to the Seifert fibration $\pi: Y\rightarrow B$. 
(For the definition of homotopy groups of orbifolds and the exact sequence associated to an orbifold fibration, see \cite{Hae1, Hae2, C0}.) As $C$ is infinite, $\delta$ must be the zero map, and consequently, 
$\pi_\ast:\pi_2^{orb}(Y)\rightarrow \pi_2^{orb}(B)$ is surjective. By the assumption that 
$Y$ is irreducible, its universal cover $\tilde{Y}$ is also irreducible (cf. \cite{BMP}, Theorem 3.23).
Consequently, we have $\pi_2^{orb}(Y)=\pi_2(\tilde{Y})=0$ which implies that 
$\pi_2^{orb}(B)=0$. Now observing that a bad $2$-orbifold and a spherical $2$-orbifold must have 
nontrivial $\pi_2^{orb}$, we conclude, by Lemma 2.2, that $z(\pi_1^{orb}(B))$ must be torsion-free.
It follows easily that $z(\pi_1^{orb}(Y))$ is torsion-free in this case. 

It remains to consider the case where $Y$ is atoroidal. If $Y$ is nonsingular (i.e., a $3$-manifold), 
then $\pi_1^{orb}(Y)=\pi_1(Y)$ is torsion-free, hence $z(\pi_1^{orb}(Y))$ must be torsion-free.
If $Y$ is an honest orbifold, then by the Orbifold Theorem of Boileau, Leeb and Porti
(cf. \cite{BLP}, Corollary 1.2), $Y$ is 
geometric. In fact, we will need the following more precise statement: $Y$ has a metric of constant 
curvature or is Seifert fibered. It is clear that, since $\pi^{orb}_1(Y)$ is infinite, we only need to
discuss the following two cases: (i) $Y$ is hyperbolic, (ii) $Y$ is Euclidean.

Suppose $Y$ is hyperbolic. Then there is a hyperbolic $3$-manifold $Y^\prime$ and a finite
group of isometries $G$ such that $Y=Y^\prime/G$. Now suppose $z(\pi_1^{orb}(Y))$ is not 
torsion-free, and let $g\in z(\pi_1^{orb}(Y))$ be a torsion element. Then since $\pi_1(Y^\prime)$ is torsion-free, $g$ may be regarded as an element of $G$ and acts on $Y^\prime$ via deck transformations. Moreover, $g$ must have a fixed point, say $p\in Y^\prime$. This gives rise
to an automorphism $g_\ast$ of $\pi_1(Y^\prime,p)$, which is trivial because $g\in z(\pi_1^{orb}(Y))$.
By Mostow Rigidity, $g:Y^\prime\rightarrow Y^\prime$ is trivial, which is a contradiction. 

Suppose $Y$ is Euclidean. By Bieberbach Theorem (cf. \cite{Sct}, p. 443), $Y$ is finitely covered
by $T^3$ with deck transformation group $G$. Let $x\in z(\pi_1^{orb}(Y))$ be a torsion element. 
Then $x$ may be regarded as an element of $G$ and acts on $T^3$ via deck transformations.
Furthermore, $x$ must have a fixed point, say $p\in T^3$. Since $x$ is central, the induced
automorphism $x_\ast: \pi_1(T^3,p)\rightarrow\pi_1(T^3,p)$ must be trivial. It follows that $x$
is trivial, which is a contradiction. 

This completes the proof of the lemma.

\end{proof}

Given any good $3$-orbifold $Y$ which is not irreducible, one can cut $Y$ open 
along a finite system of spherical $2$-suborbifolds into pieces which are irreducible. 
More precisely, by the spherical decomposition theorem (cf. Theorem 3.2, \cite{BMP}), there 
is a finite, nonempty collection of disjoint spherical 2-suborbifolds $\{\Sigma_j\}$ such that 
each component $Y_i$ of $Y\setminus \{\Sigma_j\}$ becomes an irreducible $3$-orbifold 
after capping-off the boundary spherical $2$-suborbifolds by the corresponding discal 
$3$-orbifolds. 

For the purpose in this paper, a slightly improved version of the above statement 
is needed. More concretely, given any system of spherical $2$-suborbifolds 
$\{\Sigma_j\}$ of $Y$, let $\{Y_i\}$ be the set of components of $Y\setminus \{\Sigma_j\}$. 
We say that $\Sigma_j$ is separating (resp. non-separating) in $Y_i$ if $\Sigma_j$ is a boundary 
component (resp. a non-separating spherical $2$-suborbifold) of the closure of $Y_i$ in $Y$. 
(Note that $\Sigma_j$ can be a non-separating spherical $2$-suborbifold of $Y$ but is 
separating in $Y_i$.)
With this understood, we say that the corresponding spherical decomposition of Y is {\it reduced} 
if for any $\Sigma_j$, $Y_i$ such that $\Sigma_j$ is separating in $Y_i$, $\pi_1^{orb}(\Sigma_j)$ 
is a proper subgroup of $\pi_1^{orb}(Y_i)$ under the inclusion of $\Sigma_j$ in the closure of 
$Y_i$ in $Y$. 

\begin{lemma}
For any good, non-irreducible $3$-orbifold $Y$, there exists a reduced spherical 
decomposition of $Y$ into irreducible $3$-orbifolds. 
\end{lemma}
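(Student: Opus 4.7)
My plan is to produce a reduced spherical decomposition from an arbitrary one by iteratively deleting any spherical $2$-suborbifold that witnesses failure of the reduced condition. I begin with a spherical decomposition $\{\Sigma_j\}$ of $Y$ into irreducible capped pieces $\widehat{Y_i}$ furnished by Theorem 3.2 of \cite{BMP}, whose hypotheses hold since $Y$ is pseudo-good and not irreducible. If the current decomposition is not reduced, then there is a pair $(\Sigma_{j_0},Y_{i_0})$ with $\Sigma_{j_0}$ a boundary component of $\overline{Y_{i_0}}$ for which $\pi_1^{orb}(\Sigma_{j_0})=\pi_1^{orb}(Y_{i_0})$ under the inclusion. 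I will show that $\Sigma_{j_0}$ can be removed from $\{\Sigma_j\}$ so that the remaining collection still decomposes $Y$ into irreducible capped pieces; as $|\{\Sigma_j\}|$ strictly decreases at each stage, the iteration terminates at a reduced spherical decomposition.

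For the deletion step, set $G:=\pi_1^{orb}(\Sigma_{j_0})$, which is finite since $\Sigma_{j_0}$ is spherical. Capping any spherical boundary $\Sigma$ with its discal $D$ does not change the orbifold fundamental group: the van Kampen amalgam $\pi_1^{orb}(Y)\ast_{\pi_1^{orb}(\Sigma)}\pi_1^{orb}(D)$ is of the form $A\ast_H H$ (with an isomorphism on the discal side) and collapses to $A$. Applying this observation to every boundary component of $\overline{Y_{i_0}}$ yields $\pi_1^{orb}(\widehat{Y_{i_0}})=\pi_1^{orb}(Y_{i_0})=G$. Combined with the irreducibility of $\widehat{Y_{i_0}}$, orbifold Geometrization (Corollary 1.2 of \cite{BLP} together with \cite{P}) then forces $\widehat{Y_{i_0}}\cong S^3/G$. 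Because $\pi_1^{orb}(\Sigma_{j_0})=G$, the preimage of $\Sigma_{j_0}$ in $S^3$ is a single $G$-invariant $2$-sphere, and the hypothesis that $\Sigma_{j_0}$ separates $\widehat{Y_{i_0}}$ prevents $G$ from swapping the two open balls this sphere bounds; hence each ball descends to a discal $3$-orbifold in $\widehat{Y_{i_0}}$. Writing $(\widehat{Y_{i_0}})^\ast$ for $\overline{Y_{i_0}}$ with every boundary component other than $\Sigma_{j_0}$ capped off, this says precisely that $(\widehat{Y_{i_0}})^\ast$ is itself a discal $3$-orbifold with boundary $\Sigma_{j_0}$.

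Now remove $\Sigma_{j_0}$ from the collection. Since $\Sigma_{j_0}$ is separating in $Y_{i_0}$, its opposite side belongs to a distinct component $Y_{i^\prime}$, and the deletion merges $Y_{i_0}$ and $Y_{i^\prime}$ into a single new piece $P$, leaving every other $\Sigma_j$ and $Y_i$ untouched. The capped orbifold $\widehat{P}$ is obtained from $\widehat{Y_{i^\prime}}$ by removing its discal cap of $\Sigma_{j_0}$ on the $Y_{i^\prime}$ side and gluing in the discal $(\widehat{Y_{i_0}})^\ast$ in its place. Because the $G$-action on $B^3$ is uniquely determined by its restriction to $\partial B^3=S^2$ via radial extension, any two orientation-preserving discal fillings of the oriented spherical $2$-orbifold $\Sigma_{j_0}$ are diffeomorphic rel boundary; consequently $\widehat{P}\cong\widehat{Y_{i^\prime}}$, which is irreducible by hypothesis on the starting decomposition. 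This completes the deletion step and hence the inductive construction.

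The hard part is verifying that irreducibility of the capped pieces survives the merge, since a priori the union of two irreducible orbifolds along a spherical $2$-suborbifold need not be irreducible. The resolution is to invoke orbifold Geometrization in order to identify $\widehat{Y_{i_0}}$ with $S^3/G$; a direct covering-space argument then supplies the discal structure on the side of $\Sigma_{j_0}$ opposite to its cap, reducing the merge to the replacement of one discal filling by another.
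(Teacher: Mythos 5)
Your proof is correct and follows the same route as the paper's: start from the \cite{BMP} spherical decomposition, and whenever some separating $\Sigma_{j_0}$ violates the reduced condition, observe via Geometrization that the capped-off offending piece $\widehat{Y_{i_0}}$ is spherical, hence that $\overline{Y_{i_0}}$ with all other boundaries capped is discal, so that deleting $\Sigma_{j_0}$ merges $Y_{i_0}$ into its neighbor without changing the diffeomorphism type of that neighbor's capped-off orbifold; iterate. The paper states the key step in one sentence ("by the Geometrization Theorem, the $3$-orbifold obtained from capping-off the boundary components of $Y_i$ other than $\Sigma_j$ is a discal $3$-orbifold"), whereas you unpack the details it leaves implicit — that capping does not change $\pi_1^{orb}$, that finiteness of $\pi_1^{orb}(\widehat{Y_{i_0}})$ plus irreducibility forces $\widehat{Y_{i_0}}\cong S^3/G$, the covering-space/Alexander argument showing the complement of the cap is discal, and the uniqueness of discal fillings that makes the cap-swap harmless — so the substance is the same, just spelled out more fully.
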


\begin{proof}
Given any spherical decomposition of $Y$ into irreducible pieces which always 
exists (cf. Theorem 3.2, \cite{BMP}), we can modify it into a reduced spherical decomposition 
as follows. Let $\{\Sigma_j\}$ be the corresponding system of spherical 2-suborbifolds and 
let $\{Y_i\}$ be the set of components of $Y\setminus \{\Sigma_j\}$. Suppose for some $i,j$, 
$\Sigma_j$ is separating in $Y_i$ and $\pi_1^{orb}(\Sigma_j)=\pi_1^{orb}(Y_i)$. Let $Y_k\in\{Y_i\}$ 
be the other component whose closure in $Y$ also contains $\Sigma_j$ as a boundary component. 
Then observe that the $3$-orbifold obtained from capping-off $Y_k\cup\Sigma_j\cup Y_i$ is the same 
as that obtained from capping-off $Y_k$. This is because by the Geometrization Theorem, 
the $3$-orbifold obtained from capping-off the boundary components of $Y_i$ other than $\Sigma_j$ 
is a discal $3$-orbifold with boundary $\Sigma_j$. Consequently, if we remove $\Sigma_j$ from 
$\{\Sigma_j\}$, the corresponding spherical decomposition still splits $Y$ into irreducible pieces. 
Continuing this process, we arrive at a reduced spherical decomposition in finitely many steps. 
Hence the lemma. 

\end{proof}

We remark that given any spherical decomposition of a good $3$-orbifold $Y$, with 
$\{\Sigma_j\}$ being the system of spherical $2$-suborbifolds and $\{Y_i\}$
being the set of components of $Y\setminus \{\Sigma_j\}$,
one has a corresponding finite graph of groups whose vertex groups and 
edge groups are given by $\{\pi_1^{orb}(Y_i)\}$ and $\{\pi_1^{orb}(\Sigma_j)\}$
respectively, such that $\pi_1^{orb}(Y)$ 
is naturally isomorphic to the fundamental group of the graph of groups. When the 
spherical decomposition is reduced, the corresponding graph of groups is also reduced 
in the sense that an edge group is always a proper subgroup of the vertex groups 
as long as the end points of the edge are distinct vertices. Given any finite graph of 
groups, one can always modify it into a reduced one without changing the isomorphism 
class of the fundamental groups by collapsing a number of edges. Lemma 2.4 is simply 
a manifestation of this principle in the geometric setting of spherical decomposition of 
$3$-orbifolds. When there are no non-separating spherical $2$-suborbifolds, the existence 
and uniqueness of reduced spherical decompositions were proven in \cite{Pe} 
(called {\it efficient splittings} therein). 

Next we give a classification of certain orientation-preserving finite group actions on 
$\s^1\times \s^2$. The case where the actions are free or have only isolated exceptional orbits 
was discussed in Meeks-Scott \cite{MS}, Theorem 8.4. Our discussion relies on the 
Equivariant Sphere Theorem of Meeks and Yau (cf. \cite{MY}) and Geometrization of
finite group actions on $\s^3$ (compare also Dinkelbach-Leeb \cite{DL} via equivariant Ricci flow). 

In order to state the result, we shall fix the following convention and notations. We orient $\s^3$ as the boundary of the unit ball in $\C^2$, and consider certain orientation-preserving $\Z_{2m}$-actions on 
$\s^3$. When $m$ is even there is only one such action up to a change of generators of $\Z_{2m}$.
When $m$ is odd, there are two non-equivalent such actions, and we shall denote the quotient orbifolds
by $\R\P^3_m$, $\widetilde{\R\P^3}_m$ respectively. More concretely, we fix a generator $t$
of $\Z_{2m}$, and let 
$$
\R\P^3_m=\s^3/\Z_{2m}, \mbox{ where } t\cdot (z_1,z_2)=(-z_1,\exp(\frac{\pi i}{m}) z_2), 
$$
and 
$$
\widetilde{\R\P^3}_m=\s^3/\Z_{2m}, \mbox{ where } t\cdot (z_1,z_2)
=(-z_1,\exp(\frac{(m+1)\pi i}{m}) z_2), \;\; m \mbox { is odd. }
$$
Note that when $m>1$, these actions can be characterized by the fact that the whole group has no
fixed points but the index $2$ subgroup fixes an unknotted circle. Moreover, 
the difference between $\R\P^3_m$ and $\widetilde{\R\P^3}_m$ is that the singular
set of $\widetilde{\R\P^3}_m$ has two components, of multiplicities $2$ and $m$ respectively,
while the singular set of $\R\P^3_m$ has only one component, of multiplicity $m$.

\begin{lemma}
Let $G$ be a finite group which acts on $\s^1\times \s^2$ preserving the orientation. 
\begin{itemize}
\item Suppose the action of $G$ is homologically trivial. Then $\s^1\times \s^2/G$ is the mapping
torus of a periodic diffeomorphism of some spherical $2$-orbifold.
\item Suppose $G$ is cyclic and is generated by $t$ which is homologically non-trivial. Then 
the quotient orbifold $\s^1\times \s^2/G$ is diffeomorphic to one of the following
$$
\R\P^3_m \#_{m} \R\P^3_m,\;\;\; \R\P^3_m \#_{m} \widetilde{\R\P^3}_m, \;\; \mbox{ or  }
\widetilde{\R\P^3}_m\#_{m} \widetilde{\R\P^3}_m,
$$
where $\#_m$ denotes the connected sum of orbifolds over a point of multiplicity $m$, such that a 
generator of the $\pi_1^{orb}$ of $\s^2/\Z_m$ has the same image on both sides of the connected sum.
\end{itemize}
\end{lemma}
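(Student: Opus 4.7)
The plan is to reduce part (2) to part (1) by passing to the index-$m$ subgroup generated by $t^2$, and then to analyze the residual $\Z_2$-action on the resulting mapping torus using the Geometrization of finite group actions on $\s^3$.

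For part (1), I would first invoke the Equivariant Sphere Theorem of Meeks and Yau to obtain a $G$-invariant system of disjoint essential $2$-spheres in $\s^1\times\s^2$. Since all such spheres represent a generator of $H_2(\s^1\times\s^2)=\Z$, the $G$-action on the system is by permutation, and the complement is a disjoint union of copies of $\s^2\times I$ cyclically arranged along the $\s^1$-direction. Homological triviality of $G$ on $H_1 = \Z$ rules out any ``reflection'' of this cyclic arrangement, so $G$ preserves (up to equivariant isotopy) a fibration $\s^1\times\s^2\to \s^1$ on whose base it acts by a finite group of rotations. The quotient is then manifestly the mapping torus of the periodic diffeomorphism of a spherical $2$-orbifold fiber induced by the stabilizer action.

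For part (2), non-triviality of the $H_1$-action forces $t$ to act as $-1$ on $H_1$, so $|G|=2m$ and $\langle t^2\rangle$ acts homologically trivially. Part (1) then identifies $M:=(\s^1\times\s^2)/\langle t^2\rangle$ with the mapping torus of some periodic diffeomorphism of a spherical $2$-orbifold $F$, and the residual involution $\bar t$ on $M$ covers the reflection $z\mapsto -z$ on the base $\s^1$, preserving exactly two fibers $F_0,F_{1/2}$ setwise. I would cut $M$ along these two fixed fibers to obtain two copies of $F\times[0,1]$, on each of which $\bar t$ acts in product form (an involution of $F$ combined with the reversal of $[0,1]$). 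Each quotient is therefore a discal $3$-orbifold whose boundary is a football $\s^2/\Z_m$, and Geometrization of finite group actions on $\s^3$ (cf.\ \cite{BLP, DL}) lets me identify it with the complement of a discal neighborhood of a multiplicity-$m$ singular point in either $\R\P^3_m$ or $\widetilde{\R\P^3}_m$, according to the structure of the induced $\Z_{2m}$-action on the covering $\s^3$. Re-gluing the two halves along their common football boundary then produces one of the three listed connected sums; the matching condition on the $\pi_1^{orb}$-generators of the footballs is automatic, since the common boundary arises from a single $G$-orbit of separating spheres in $\s^1\times\s^2$.

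The hard part will be the final identification: distinguishing $\R\P^3_m$ from $\widetilde{\R\P^3}_m$ for each half of the decomposition. This comes down to keeping track of whether the rotation $\phi$ of the central fiber of $M$ has order $m$ or $2m$, and of how $\bar t|_{F_i}$ meets the multiplicity-$m$ singular locus of the football $F_i$; the three possible combinations correspond precisely to the three cases in the statement.
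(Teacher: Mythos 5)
Your overall strategy is the same as the paper's: apply the Equivariant Sphere Theorem of Meeks--Yau to obtain a $G$-invariant collection of essential spheres, cut $\s^1\times\s^2$ into $\s^2\times I$ pieces, and feed the resulting finite group actions on (capped-off) $\s^3$ into Geometrization. The reorganization for part (2) --- forming $M=(\s^1\times\s^2)/\langle t^2\rangle$ via part (1) and then analyzing the residual involution $\bar{t}$ --- is a reasonable repackaging of the paper's direct analysis of the two cases ``$t$ preserves a sphere'' and ``$t$ fixes two vertices'' of the circle graph. Note, though, that invoking part (1) as a black box is not quite enough for the reduction: you need the mapping-torus structure on $M$ to be $\bar{t}$-invariant, which means you must choose the sphere system $G$-equivariantly from the start (so you are running the same construction as the paper). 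Likewise, in part (1) the word ``manifestly'' hides a real step: one must show that the stabilizer $G_0$ acts on each $\s^2\times I$ piece as a linear action on $\s^2$ times the trivial action on $I$, which the paper does by capping off to $\s^3$ and invoking Geometrization of finite group actions on $\s^3$.

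There is also a concrete error in the decomposition in part (2). If $M$ fibers over $\s^1=\R/\Z$ with $\bar{t}$ covering $z\mapsto -z$, cutting $M$ along the $\bar{t}$-invariant fibers $F_0$ and $F_{1/2}$ produces the two pieces $F\times[0,1/2]$ and $F\times[1/2,1]$, and $\bar{t}$ \emph{swaps} these two pieces rather than preserving each of them, so ``$\bar{t}$ acts in product form on each'' is false as stated. The correct cuts are at two non-invariant fibers placed symmetrically about the fixed points, e.g.\ $F_{1/4}$ and $F_{3/4}$; then each of the pieces $F\times[-1/4,1/4]$ and $F\times[1/4,3/4]$ is $\bar{t}$-invariant with $\bar{t}$ acting as (involution of $F$) $\times$ (interval reversal), and the fixed fibers $F_0$, $F_{1/2}$ sit in the middle of the two pieces, which is where the singular locus of the quotient lives. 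Relatedly, ``each quotient is a discal $3$-orbifold'' is not accurate --- $\R\P^3_m$ or $\widetilde{\R\P^3}_m$ with a discal neighborhood removed has $\pi_1^{orb}\cong\Z_{2m}$ and is not discal; it is, as your next clause says, the complement of a discal neighborhood of a multiplicity-$m$ singular point in a spherical $3$-orbifold. Once the cut is corrected, the rest of the plan (capping off each piece to $\s^3$, applying Geometrization of the induced $\Z_{2m}$-action, and matching to $\R\P^3_m$ versus $\widetilde{\R\P^3}_m$) follows the paper's argument closely.
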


\begin{proof}
First of all, by the Equivariant Sphere Theorem of Meeks and Yau (cf. \cite{MY}, page 480), there exists a 
finite set of embedded $2$-spheres $\{\Sigma_i\}$ of $\s^1\times\s^2$ which is $G$-invariant and 
generates the $\pi_2$ as a $\pi_1$-module. Since $\pi_2(\s^1\times\s^2)$ has rank $1$, we may 
assume $G$ acts on the set of spheres $\{\Sigma_i\}$ transitively. It follows easily from the 
Geometrization Theorem that when cutting $\s^1\times\s^2$ open along the $\Sigma_i$'s, 
each component $Y_j$ of $\s^1\times\s^2\setminus \{\Sigma_i\}$ is a 
$3$-manifold diffeomorphic to the product of $\s^2$ with an interval. 

For convenience of the argument, we shall consider the following finite graph $\Gamma$, where the
vertices correspond to the components $Y_j$ and the edges to the embedded spheres $\Sigma_i$,
and $\Sigma_i$ is incident to $Y_j$ if and only if $\Sigma_i$ is contained in the closure of $Y_j$. 
Clearly $\Gamma$ is homeomorphic to a circle, and there is an induced simplicial action of $G$ on 
$\Gamma$. We denote by $G_0$ the subgroup of $G$ which acts trivially on $\Gamma$. 

Suppose $G_0$ is non-trivial. We pick an embedded sphere $\Sigma_i$ and cut $\s^1\times\s^2$
open along $\Sigma_i$. Because $\Sigma_i$ is $G_0$-invariant, we can close up 
$\s^1\times\s^2\setminus \Sigma_i$ and obtain a $G_0$-action on $\s^3$. By the Geometrization
Theorem, the action of $G_0$ is given by an isometry, which implies that  the original 
$G_0$-action on $\s^1\times \s^2$ is a product action which is trivial on the $\s^1$-factor. 
Note that we are done if $G=G_0$.

Assume $G\neq G_0$ and consider the action of $G$. In the case where $G$ acts homologically 
trivially, $G/G_0$ acts effectively on $\Gamma$ by rotations. This implies that $\s^1\times\s^2/G$ 
is the mapping torus of the $2$-orbifold $\s^2/G_0$ for some periodic diffeomorphism of $\s^2/G_0$ 
which generates $G/G_0$. The lemma follows easily in this case.

Suppose $G$ is generated by $t$ which is homologically non-trivial. Then the induced action 
of $t$ on $\Gamma$ must be given by a reflection, and $G_0$ is an index $2$ subgroup. Furthermore, $G_0$ is cyclic in this case and the action of $G_0$ on $\s^2$ is given by rotations. 
The order of $t$ is even, say $2m$, and there are two possibilities for the induced action of $t$ on the 
graph $\Gamma$: (i) $t$ has an invariant edge, (ii) $t$ fixes two vertices. 

In case (i) $t$ leaves an embedded sphere $\Sigma_i$ invariant (which is the only one because 
by assumption $G$ acts transitively on the set of spheres $\{\Sigma_i\}$). The induced action
of $t$ on $\Sigma_i$ is orientation-reversing, and there are two non-equivalent actions
when $m$ is odd. More concretely, if we identify $\Sigma_i$ with the unit sphere in 
$\R^3=\R\times\C$, then the actions are given by
$$
 t\cdot (x,z)=(-x,\exp(\frac{\pi i}{m}) z),  \mbox{ where } (x,z)\in \R\times \C,
$$
and
$$
 t\cdot (x,z)=(-x,\exp(\frac{(m+1)\pi i}{m}) z),  \mbox{ where } (x,z)\in \R\times \C, \; m \mbox { is odd. }
$$
It follows easily that the quotient of a $t$-invariant regular neighborhood of $\Sigma_i$ is 
diffeomorphic to either $\R\P^3_m$ or $\widetilde{\R\P^3}_m$ with a ball centered at a 
singular point of multiplicity $m$ removed. Moreover, the complement of the $t$-invariant 
regular neighborhood is a $3$-manifold $Y_j$ which is diffeomorphic to the product of $\s^2$ 
with an interval. The action of $t$ on $Y_j$ can be naturally extended to an $t$-action on $\s^3$
by capping-off the boundary of $Y_j$, which, by the Geometrization Theorem, is equivalent to
an isometry. Note that when $m>1$, $t^2$ has a $1$-dimensional fixed-point set. It follows easily
that $Y_j/\langle t\rangle$ is also diffeomorphic to either $\R\P^3_m$ or $\widetilde{\R\P^3}_m$ 
with a ball centered at a singular point of multiplicity $m$ removed, and $\s^1\times\s^2/G$ is 
diffeomorphic to either $\R\P^3_m \#_{m} \R\P^3_m$, or $\R\P^3_m \#_{m} \widetilde{\R\P^3}_m$, 
or $\widetilde{\R\P^3}_m\#_{m} \widetilde{\R\P^3}_m$ as claimed. 

In case (ii) where $t$ fixes two vertices of the graph $\Gamma$, the set $\{\Sigma_i\}$ has two elements
$\Sigma_1$, $\Sigma_2$, and $\s^1\times\s^2\setminus \{\Sigma_i\}$ has two components $Y_1,Y_2$,
such that $Y_1,Y_2$ are $t$-invariant and $t$ switches $\Sigma_1$ and $\Sigma_2$. Similarly, 
the $t$-actions on $Y_1,Y_2$ can be extended to a $t$-action on $\s^3$ by capping-off the boundary, 
and by the Geometrization Theorem, the quotient of $Y_1,Y_2$ by $t$ is diffeomorphic to either 
$\R\P^3_m$ or $\widetilde{\R\P^3}_m$, and the lemma follows in this case too. 

\end{proof}

We end with a lemma concerning existence of Seifert-type $T^2$-fibrations on a $4$-manifold.

\begin{lemma}
Let $\pi: X\rightarrow Y$ be a principal $\s^1$-bundle over an orientable $3$-orbifold where 
$Y$ is Seifert fibered. If the homotopy class of a regular fiber of the Seifert fibration on $Y$ lies 
in the image of $z(\pi_1(X))$ under $\pi_\ast: \pi_1(X)\rightarrow \pi_1^{orb}(Y)$, then 
$\pi:X\rightarrow Y$ may be extended to a principal $T^2$-bundle over a $2$-orbifold. 
\end{lemma}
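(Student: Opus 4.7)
The plan is to construct, from the given principal $\s^1$-action on $X$, a second commuting $\s^1$-action whose orbits project under $\pi$ to the fibers of the Seifert fibration $q: Y \to B$, where $B$ is the base $2$-orbifold. The joint $T^2$-action will then be free with orbit space $B$, exhibiting $X \to B$ as the desired principal $T^2$-bundle extending $\pi$.

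The group-theoretic setup is as follows. Let $f \in z(\pi_1(X))$ be the class of a principal $\s^1$-orbit (which is central because the action is free), and fix $c \in z(\pi_1(X))$ with $\pi_\ast(c) = \phi$, the regular fiber class of $q$. Since $Y$ is an orientable Seifert fibered $3$-orbifold with generic fiber $\s^1$, the Seifert exact sequence (cf.\ \cite{BMP}, Proposition 2.12) gives $1 \to \langle\phi\rangle \to \pi_1^{orb}(Y) \to \pi_1^{orb}(B) \to 1$; splicing with the principal $\s^1$-bundle extension $1 \to \langle f\rangle \to \pi_1(X) \to \pi_1^{orb}(Y) \to 1$ yields the central extension
\[
1 \to \Lambda \to \pi_1(X) \to \pi_1^{orb}(B) \to 1,
\qquad \Lambda := \langle f, c\rangle \subseteq z(\pi_1(X)),
\]
with $\Lambda \cong \Z^2$ (using that $\phi$, and hence $c$, has infinite order by the Seifert structure). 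This $\Lambda$ will be the kernel of the orbit map of the $T^2$-action to be built.

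To produce the second $\s^1$-action, pass to the universal cover $\tilde X$ with deck group $\pi_1(X)$. The original $\s^1$-action lifts to a free $\R$-action generated by a vector field $V_1$. Because the Seifert fibration of $Y$ is oriented, the Seifert fibers lift on the orbifold universal cover $\tilde Y$ to orbits of a free $\R$-action generated by a $\pi_1^{orb}(Y)$-equivariant vector field $\tilde V_B$. Choose any $\pi_1^{orb}(Y)$-equivariant connection on the principal $\s^1$-bundle $\tilde\pi: \tilde X \to \tilde Y$, and let $V_2$ be the horizontal lift of $\tilde V_B$. Then $V_1$ and $V_2$ are $\pi_1(X)$-equivariant and commute, so together they generate an $\R^2$-action on $\tilde X$. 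The crucial point is that the flow of $V_2$ is periodic on $X$: after flowing for the period of a Seifert fiber, the endpoint lies in the deck orbit of the starting point, and the relevant deck transformation is precisely $c$; centrality of $c$ in $\pi_1(X)$ ensures that this return map is uniform across $\tilde X$. Thus the $\R^2$-action descends to a free $T^2 = \R^2/\Lambda$ action on $X$, whose orbit map is $q\circ\pi$ and whose first $\s^1$-factor recovers $\pi$.

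The main obstacle I anticipate is verifying that the flow of $V_2$ closes up to a well-defined $\s^1$-action on $X$. Merely lifting $\phi$ to some element of $\pi_1(X)$ would not suffice, since the return map of $V_2$ after one Seifert period could vary from orbit to orbit, preventing a global $\s^1$-action. Centrality of $c$ is exactly the condition that makes this return map well-defined and constant, trivializing the potential monodromy and allowing the $T^2$-action to assemble consistently on $X$.
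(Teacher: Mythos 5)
Your group-theoretic setup is fine, and your overall goal — show that $\Pi = pr\circ\pi: X\to B$ is a principal $T^2$-bundle — matches the paper's. However, the paper does not attempt to build the second circle action explicitly. It observes that $\Pi$ is a Seifert-type $T^2$-fibration, for which being principal is equivalent to having a trivial monodromy representation, and then checks the monodromy around an arbitrary loop $\gamma\subset B$ is trivial because the classes of a section $\gamma'$ of $\Pi$ over $\gamma$, the $\s^1$-fiber $h$ of $\pi$, and a section $\delta$ of $\pi$ over a Seifert fiber all commute in $\pi_1(X)$; the latter two are central by hypothesis. That is the entire proof, and it sidesteps exactly the step where your argument has a gap.

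The gap is in the claim that the flow of $V_2$ is periodic. Flowing the horizontal lift $V_2$ on $X$ for one Seifert period returns a point $x$ to $x\cdot g(b)$, where $b=pr(\pi(x))$ and $g:B\to\s^1$ is the holonomy of your chosen connection around the Seifert fiber over $b$. There is no reason this $g$ should be constant for an arbitrary ($\pi_1$-equivariantly averaged) connection, and your appeal to the centrality of $c$ does not supply one: on the universal cover, the endpoint of the $V_2$-flow over $\tilde x$ lies in the same $\tilde\pi$-fiber $\R$ as $c\cdot\tilde x$, but it is $c\cdot\tilde x$ shifted by the holonomy defect, not $c\cdot\tilde x$ itself. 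Consequently, the $\R^2$-action you build does descend to $X$, but its stabilizer lattice is $\Z T_1\times\{0\}\ +\ \Z\cdot(-g(b)\cdot T_1/(2\pi),\,T)$ and drifts with $b$; the orbits are tori but there is no single $T^2$ acting. To repair this you would need either to make a particular choice of connection with constant Seifert-fiber holonomy, or to add a vertical correction $V_2\mapsto V_2+h(b)V_1$ and prove such an $h$ exists globally — i.e., prove the holonomy class $[g]\in H^1(B;\R)$ (or the relevant lifting obstruction) vanishes. That vanishing is not an immediate consequence of centrality and is precisely the content the paper's monodromy computation delivers in one line. As written, your proof asserts the periodicity of $V_2$ rather than proving it.
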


\begin{proof}
Let $pr:Y\rightarrow B$ be the Seifert fibration on $Y$ where $B$ is a $2$-orbifold. (We note 
that $B$ must be orientable because the class of a regular fiber of $pr$ lies in the center $z(\pi_1^{orb}(Y))$.) Then the composition of $\pi$ with $pr$, $\Pi:X\rightarrow B$, defines $X$ 
as a $T^2$-bundle over $B$. We shall prove that $\Pi$ is principal, which is equivalent to the condition that $\Pi$ has a trivial monodromy representation. 

To see that the monodromy representation of $\Pi$ is trivial, we consider an arbitrary loop $\gamma$
in $B$ lying in the complement of the singular set. Pick a base point $b_0\in \gamma$, and a base
point $x_0\in \Pi^{-1}(b_0)$.  Choose a section $\gamma^\prime$ of $\Pi$ over $\gamma$ through 
$x_0$, and a loop $\delta$ containing $x_0$ in $X$ which is a section of $\pi$ over the fiber of $pr$
at $b_0$. Let $h$ be the fiber of $\pi$ containing $x_0$. With this understood, the monodromy representation of $\Pi$ is trivial if and only if the classes of $h$, $\delta$, and $\gamma^\prime$
in $\pi_1(X)$ commute. But this is clear because the class of both $h$ and $\delta$ lies in 
the center $z(\pi_1(X))$. Hence the lemma. 

\end{proof}

\section{Fiber-sum decomposition and fundamental group}

This section contains three subsections. Section 3.1 is devoted to a review of Bass-Serre theory and
Rips-Zela theory, and it also contains a proof of Lemma 3.1 and Lemma 3.2. Section 3.2 is occupied by a proof of Theorem 1.5(1), as given through Lemma 3.3, Lemma 3.4, and Proposition 3.5. The last
subsection, Section 3.3, contains the proof for Theorem 1.5(2). 

\subsection{Some recollections in geometric group theory}
We begin with a brief review of the Bass-Serre theory of groups acting on trees, see e.g. 
\cite{DD, SW} for more details. 

Let $\Gamma$ be a connected, nonempty graph, with the set of vertices and edges denoted by
$V\Gamma$ and $E\Gamma$ respectively, and the incidence functions denoted by $\iota,\tau:
E\Gamma\rightarrow V\Gamma$. Recall that a group of graphs, denoted by $G_\Gamma$,
consists of the following data: each $v\in V\Gamma$, $e\in E\Gamma$ is assigned with a group
$G(v)$, $G(e)$ respectively, and for each $e\in E\Gamma$ there is a pair of boundary monomorphisms 
$\alpha: G(e)\rightarrow G(\iota e)$ and $\omega: G(e)\rightarrow G(\tau e)$. 

Let $\Gamma_0$ be a maximal tree in $\Gamma$. The fundamental group of $G_\Gamma$ 
with respect to $\Gamma_0$, denoted by $\pi(G_\Gamma,\Gamma_0)$, is the group given by
the following presentation:
\begin{itemize}
\item generating set: $\{t_e|e\in E\Gamma\}\cup \bigcup_{v\in V\Gamma}G(v)$
\item relations: the relations for $G(v)$, $\forall v\in V\Gamma$, $t^{-1}_e\alpha(g)t_e=\omega(g)$,
$\forall g\in G(e)$, $\forall e\in E\Gamma$, and $t_e=1$, $\forall e\in E\Gamma_0=E\Gamma\cap\Gamma_0$.
\end{itemize}
It is known that the isomorphism class of $\pi(G_\Gamma,\Gamma_0)$ is independent of $\Gamma_0$,
which is called the {\it  fundamental group} of the graph of groups $G_\Gamma$.

Given any graph of groups $G_\Gamma$, there is a canonically constructed tree $T$, called the
{\it Bass-Serre tree}, together with a canonical action of the fundamental group of $G_\Gamma$. 
Moreover, the graph of groups $G_\Gamma$ can be recovered from the action of its fundamental 
group on the Bass-Serre tree in a canonical way, which we describe below.

Let $G$ be a group acting on a tree $T$ without inversion, i.e., the action sends vertices to vertices
and edges to edges, such that every edge invariant under the action is being fixed. 
Let $\Gamma$ be the quotient graph, and $p:T\rightarrow \Gamma$ be the quotient map. 
Let $T^\prime\subset T$ be a subset and $T_0\subset T^\prime$ be a subtree of $T$. 
We call $T^\prime$ a {\it fundamental $G$-transversal} in $T$ with subtree $T_0$, if (i) 
$p:T^\prime\rightarrow \Gamma$ is bijective, and (ii) $p:T_0\rightarrow\Gamma$ is onto a maximal tree in $\Gamma$. It is known that such a pair $(T^\prime,T_0)$ always
exists. Note that by (i), one can give a canonical graph structure to $T^\prime$ as follows: 
$VT^\prime=VT\cap T^\prime$, $ET^\prime=ET\cap T^\prime$, and the incidence functions 
$\bar{\iota},\bar{\tau}:ET^\prime\rightarrow VT^\prime$ are defined by the equations 
$$
p(\bar{\iota}e)=p(\iota e),\;\; p(\bar{\tau}e)=p(\tau e), \;\;\forall e\in ET^\prime. 
$$
(Here $\iota,\tau$ are the incidence functions of $T$.) Note that by (ii), $T_0$ is a maximal tree in 
$T^\prime$ with respect to this graph structure, and $\bar{\iota} e=\iota e$, $\bar{\tau} e=\tau e$ for
any $e\in ET_0$. 

Now given any fundamental $G$-transversal $T^\prime$ with subtree $T_0$, one can canonically
construct a graph of groups $G_\Gamma$ as follows, where $\Gamma$ and $T^\prime$ are identified 
as graphs. For any $v\in VT^\prime$, we assign to it the group $G(v)=G_v=\{g\in G|gv=v\}$, and
for any $e\in ET^\prime$, we assign to it the group $G(e)=G_e=\{g\in G|ge=e\}$. The boundary
monomorphisms $\alpha: G(e)\rightarrow G(\bar{\iota} e)$, $\omega: G(e)\rightarrow G(\bar{\tau}e)$ 
are defined as follows. For any $e\in ET^\prime$, pick $g_e,h_e\in G$ such that $g_e\bar{\iota}e=
\iota e$, $h_e\bar{\tau} e=\tau e$, where for any $e\in ET_0$, $g_e=h_e=1$. Then for any
$g\in G(e)$, define $\alpha(g)=g_e^{-1}gg_e$ and $\omega(g)=h_e^{-1}gh_e$ (note that
$G(e)\subset G(\iota e)$, $G(e)\subset G(\tau e)$). 

There is an obvious homomorphism $\phi: \pi(G_\Gamma,T_0)\rightarrow G$ which sends
$t_e$ to $g_e^{-1}h_e\in G$. The fundamental theorem of the Bass-Serre theory asserts that
$\phi$ is an isomorphism. Moreover, when $T$ is the Bass-Serre tree of a graph of groups 
$G_\Gamma$ and $G$ is the fundamental group of $G_\Gamma$ with the canonical action 
on $T$, the graph of groups $G_\Gamma$ can be recovered in the above manner. 

Next we review the Rips-Sela theory (see \cite{RipS} for more details). Given any group $G$,
a {\it Z-splitting} of $G$ is a presentation of $G$ as the fundamental group of a finite graph
of groups where all the edge groups are infinite cyclic. {\it Elementary Z-splittings} are
Z-splittings for which the graph of groups contains only one edge, i.e., an amalgamated 
product or an HNN extension. Given a Z-splitting of $G$ and an elementary Z-splitting of
a vertex group of the Z-splitting which is compatible with the boundary monomorphisms, there is a
naturally defined new Z-splitting of $G$ which is called an {\it elementary refinement}, where
the new graph of groups is obtained by replacing the vertex in the original graph by the 
corresponding one edge graph.  A {\it refinement} of a Z-splitting is the result of a sequence 
of elementary refinements. The inverse operation of a refinement is called a {\it collapse}. 

The fundamental result in the Rips-Sela theory concerns the existence of certain universal 
Z-splittings of a single-ended finitely presented group, called {\it canonical JSJ decompositions}, 
from which all other Z-splittings of the group can be derived in a certain organized way (involving
refinement or collapse). 
The starting point of this work is an analysis of the interactions between two distinct elementary 
Z-splittings. To be more concrete, let $G=A_i\ast_{C_i} B_i$ (or $A_i\ast_{C_i}$) be two given 
elementary Z-splittings, where $C_i$ is generated by $c_i$, for $i=1,2$. The element $c_2$ is 
called {\it elliptic} with respect to the first splitting if it is contained in a conjugate of $A_1$ or $B_1$, 
and {\it hyperbolic} otherwise, and similarly for $c_1$ with respect to the second splitting. With this
understood, one of the basic result in the Rips-Sela theory (cf. Theorem 2.1, \cite{RipS}) asserts 
that if $G$ is freely indecomposable, then $c_1$ and $c_2$ are simultaneously elliptic or 
simultaneously hyperbolic. 

The bulk of the Rips-Sela theory is devoted to the analysis of hyperbolic-hyperbolic splittings.
Our first observation is that for a group $G$ with infinite $z(G)$, hyperbolic-hyperbolic splittings
seldom occur, which greatly simplifies the situation. 

\begin{lemma}
Let $G$ be a single-ended group with infinite $z(G)$, which is not isomorphic to the fundamental
group of a $2$-torus or Klein bottle. Then (i) the center $z(G)$ is contained in the edge groups of
every reduced Z-splitting of $G$, and (ii) there are no hyperbolic-hyperbolic elementary Z-splittings 
of $G$.
\end{lemma}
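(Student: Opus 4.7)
My plan is to deduce both parts from Lemma 2.1 by reducing an arbitrary Z-splitting to an elementary one via a collapse in the Bass--Serre tree, and then to derive part (ii) from part (i) using the elliptic/hyperbolic dichotomy for group actions on trees.

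For part (i), fix an edge $e$ of the given reduced Z-splitting of $G$, set $C = G(e)$, and collapse in the Bass--Serre tree every $G$-orbit of edges other than the orbit of $e$. The resulting one-edge quotient graph of groups presents $G$ either as an amalgamated product $A\ast_C B$ (when the endpoints of $e$ in $\Gamma$ are distinct) or as an HNN extension $A\ast_C\alpha$ (when $e$ is a loop), with $C$ infinite cyclic and $A$, $B$ absorbing the vertex groups of the respective collapsed subgraphs. In the amalgamated case the reduced hypothesis gives $C\subsetneq G(v)\subseteq A$ at each endpoint $v$ of $e$, so $C\neq A$ and $C\neq B$, and Lemma 2.1(1) yields $z(G)\subseteq C$. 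In the HNN case Lemma 2.1(2) gives the same conclusion unless its exceptional scenario occurs, in which case $C=A$ is infinite cyclic and $G\cong \Z\ast_{\Z}\alpha'$ with $\alpha'$ a finite-order element of $\mathrm{Aut}(\Z)=\{\pm 1\}$; this would force $G$ to be $\pi_1(T^2)$ (when $\alpha'=1$) or the $\pi_1$ of a Klein bottle (when $\alpha'=-1$), contrary to hypothesis. Hence $z(G)\subseteq G(e)$ for every edge $e$.

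For part (ii), assume for contradiction that $G$ has a hyperbolic--hyperbolic pair of elementary Z-splittings with infinite cyclic edge groups $C_1=\langle c_1\rangle$ and $C_2=\langle c_2\rangle$ and Bass--Serre trees $T_1$, $T_2$. Each elementary splitting is itself a reduced Z-splitting, so part (i) yields $z(G)\subseteq C_1$ and $z(G)\subseteq C_2$. Pick a nontrivial $z\in z(G)$, which exists since $z(G)$ is infinite. Then $z=c_1^a$ with $a\neq 0$; since $c_1$ is hyperbolic in $T_2$, its translation length $\ell(c_1)$ is positive, whence $z$ has translation length $|a|\,\ell(c_1)>0$ and is hyperbolic in $T_2$. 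On the other hand $z\in C_2$ stabilizes the edge of $T_2$ fixed by $C_2$, so $z$ is elliptic in $T_2$---a contradiction.

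The main obstacle---and the reason the torus and Klein bottle groups are excluded by hypothesis---is controlling the exceptional case of Lemma 2.1(2) for loop collapses, where the edge group happens to coincide with the vertex group; once this is dispensed with, both assertions reduce to a direct combination of Lemma 2.1 with the standard Bass--Serre facts that the stabilizer of any edge in the Bass--Serre tree is conjugate to an edge group and that translation length is multiplicative along cyclic subgroups.
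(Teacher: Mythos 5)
Your argument for part (i) is essentially the paper's: reduce to an elementary splitting by collapsing all edges other than the chosen $e$, then apply Lemma 2.1, with reducedness supplying $A\neq C\neq B$ in the amalgam case and the torus/Klein-bottle hypothesis killing the exceptional scenario of Lemma 2.1(2). One imprecision to fix: the collapse yields an amalgam exactly when $e$ is a bridge of $\Gamma$, not merely when its endpoints are distinct. A non-loop edge whose removal leaves $\Gamma$ connected also collapses to an HNN extension; but in that case reducedness still gives $C=G(e)\subsetneq G(\iota e)\subseteq A$, so $A\neq C$ and the exceptional scenario of Lemma 2.1(2) cannot arise anyway. Likewise, you assert without justification that the two elementary splittings in part (ii) are reduced (the paper takes care to note this); the reason is that a degenerate elementary splitting, say with $A_1=C_1$, would make the Bass--Serre tree $T_1$ have a $G$-fixed vertex, forcing every element to be elliptic and contradicting hyperbolicity.

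Your argument for part (ii) is genuinely different from, and more elementary than, the paper's. After noting that $c_1^m,c_2^n\in z(G)$ commute, the paper invokes Rips--Sela's Theorem 3.6 --- a substantial structure theorem for groups admitting a pair of hyperbolic--hyperbolic $Z$-splittings with commuting edge generators --- to pin $G$ down to a short list of surface-type groups, which are then excluded by hypothesis or by Lemma 2.2. You instead pick a nontrivial $z\in z(G)$, write $z=c_1^a$ with $a\neq 0$ using part (i), and derive a direct contradiction: $c_1$ acts on $T_2$ as a hyperbolic isometry of positive translation length $\ell(c_1)$, so $z=c_1^a$ has translation length $|a|\,\ell(c_1)>0$ and is hyperbolic on $T_2$, while simultaneously $z\in C_2$ stabilizes an edge of $T_2$ and is therefore elliptic. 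This is correct, uses only the elliptic/hyperbolic dichotomy for tree isometries together with multiplicativity of translation length under powers, and sidesteps Rips--Sela Theorem 3.6 entirely; in that respect your proof of (ii) is cleaner and more self-contained than the paper's.
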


\begin{proof}

We shall first prove part (i) of the lemma, where it suffices to consider only the case of elementary 
Z-splittings. Let $G=A\ast_C B$ or $A\ast_C$ be an elementary Z-splitting, where $A\neq C\neq B$.
By Lemma 2.1, if the splitting is an amalgamated product, then $C$ contains $z(G)$. If
the splitting is an HNN extension and $C$ does not contain $z(G)$, then $A=C=\langle c\rangle$ which
is infinite cyclic, and $G$ is isomorphic to $A\ast_A\alpha$ for a finite order automorphism
$\alpha$ of $A$. Clearly $\alpha$ is either identity or $\alpha: c\mapsto c^{-1}$, which implies that $G$
is isomorphic to the fundamental group of a $2$-torus or Klein bottle. Hence part (i) of the lemma. 

As for part (ii), suppose to the contrary, there is a pair of hyperbolic-hyperbolic elementary 
Z-splittings $G=A_i\ast_{C_i} B_i$ (or $A_i\ast_{C_i}$), $i=1,2$, where $C_i$ is generated by $c_i$. 
We first note that the hyperbolicity implies that the splittings are reduced. Then by part (i), there are 
integers $m,n>0$ such that $c_1^m, c_2^n\in z(G)$, so that $c_1^m$ and $c_2^n$
are commutative. With this understood, 
Theorem 3.6 in Rips-Sela \cite{RipS} implies that $G$ is isomorphic to the fundamental group 
of either a $2$-torus, or a Klein bottle, or an Euclidean $2$-branched projective plane, or an Euclidean
$4$-branched sphere (an explicit presentation of these groups are given in Proposition 3.3 of
\cite{RipS}, p. 63). The case of $2$-torus or Klein bottle is excluded by the assumptions of the lemma,
and the rest of the cases are excluded by the fact that $G$ has infinite center (see Lemma 2.2). 
(Note that in Theorem 3.6 of \cite{RipS}, there is the assumption that $G$ is a freely indecomposable 
group which does not split over $\Z_2$. By Stallings End Theorem, cf. e.g., \cite{SW}, Theorem 6.1,
$G$ satisfies this assumption because of being single-ended.) Hence the lemma.

\end{proof}

We remark that hyperbolic-hyperbolic splittings do occur. For example, let $G$ be the fundamental 
group of a Klein bottle. Then $G=A\ast_A\alpha$, where $A=\langle c\rangle$ is infinite cyclic and 
$\alpha: c\mapsto c^{-1}$, and $G=A\ast_C A$, where $C$ is the index $2$ subgroup of the infinite
cyclic group $A$, are a pair of hyperbolic-hyperbolic splittings of $G$. 

Let $G$ be a single-ended group with infinite $z(G)$, which is not isomorphic to the fundamental
group of a $2$-torus or Klein bottle. Let $T$ be the Bass-Serre tree of a reduced Z-splitting of $G$,
and let $V$ be the subset of the set of vertices $VT$ which consists of $v$ such that the isotropy 
subgroup $G_v$ fixes a vertex $v^\prime\neq v$. The subset $V$ is clearly $G$-invariant, which
gives rise to a $G$-invariant partition $(V, VT\setminus V)$ of $VT$. The following lemma is concerned with the structure of $V$.

\begin{lemma}
There exists a collection of infinite cyclic subgroups $G_i$ of $G$, $i\in I$,  which has the following significance.
\begin{itemize}
\item For each $i\in I$, let $V_i$ be the subset of $V$ consisting of $v$ such that $G_v=G_i$, 
and let $H_i\equiv \{t\in G|tgt^{-1}=g, \forall g\in G_i\}$ be the centralizer of $G_i$. Then 
$H_i$ acts transitively on $V_i$.
\item For each $i\in I$, let $\{g_j|j\in J(i)\}$ be a fixed choice of representatives of the right cosets
of $H_i$ in $G$, where the right coset $H_i$ is represented by $g_j=1$. Then 
$\{g_j(V_i)| j\in J(i), i\in I\}$ forms a partition of $V$. 
\end{itemize}
\end{lemma}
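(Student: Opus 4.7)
The plan is to take $\{G_i\}_{i\in I}$ to be a complete set of representatives of the $G$-conjugacy classes of the subgroups $\{G_v : v\in V\}$, and to verify both assertions by analyzing the geodesic path between two points of $V_i$ in $T$.

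First I would establish two structural facts for $v\in V$: (a) $G_v$ is infinite cyclic, and (b) $N_G(G_v)=C_G(G_v)$. For (a): since $v\in V$, there is $v'\neq v$ with $G_v\subseteq G_{v'}$, so $G_v$ fixes the geodesic from $v$ to $v'$; hence $G_v\subseteq G_e$ for the first edge $e$ of this geodesic, and combined with $G_e\subseteq G_v$ we obtain $G_v=G_e$, which is infinite cyclic as an edge group of the Z-splitting. For (b): by Lemma 3.1, $z(G)\subseteq G_v$, and since $z(G)$ is infinite and $G_v$ is infinite cyclic, $z(G)$ has finite index in $G_v$. If $g\in N_G(G_v)$ inverted a generator $c$ of $G_v$, it would invert the central element $c^m\in z(G)$, giving $c^{2m}=1$ and contradicting the infinite order of $c$.

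With $\{G_i\}$ chosen and $H_i := C_G(G_i) = N_G(G_i)$, the core of the proof is to show that for any $v, v'\in V_i$ we have $v'\in H_i\cdot v$. Consider the geodesic $v=u_0, u_1,\ldots, u_n=v'$; since $G_i$ fixes both endpoints it fixes the entire path. I would proceed by induction on $k$, proving $u_k\in V_i$ and $u_k\in H_i\cdot v$ for each $k$. For the inductive step, assume $G_{u_k}=G_i$. The edge $e_{k+1}=(u_k, u_{k+1})$ satisfies $G_i\subseteq G_{e_{k+1}}\subseteq G_{u_k}=G_i$, so $G_{e_{k+1}}=G_i$. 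Reducedness of the Z-splitting---which requires the edge group of every non-loop edge to be a proper subgroup of both adjacent vertex groups---then forces $e_{k+1}$ to project to a loop in $T/G$, so $u_k$ and $u_{k+1}$ lie in the same $G$-orbit. Writing $u_{k+1}=g u_k$ gives $G_i\subseteq G_{u_{k+1}}=g G_i g^{-1}$. The key index argument: let $d := [g G_i g^{-1} : G_i]$; iteration yields $[G_i : g^{-n} G_i g^n] = d^n$, and since $z(G)$ is $g$-invariant, $z(G)\subseteq g^{-n} G_i g^n$ for every $n$, so $d^n \leq [G_i : z(G)] < \infty$, forcing $d=1$. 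Hence $g\in N_G(G_i)=H_i$ and $G_{u_{k+1}}=G_i$, completing the induction. Thus $v'=u_n\in H_i\cdot v$.

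For the partition, for each $i$ the translates $g_j V_i$ (as $g_j$ runs over right-coset representatives of $H_i$ in $G$) consist of vertices with stabilizer $g_j G_i g_j^{-1}$, and these subgroups are distinct because $N_G(G_i)=H_i$; hence the sets $g_j V_i$ are pairwise disjoint. Since the $G_i$'s lie in distinct $G$-conjugacy classes, translates coming from different indices are also disjoint, and every $v\in V$ lies in exactly one $g_j V_i$ (namely the one determined by the unique conjugacy class and conjugator of $G_v$). The main obstacle is the inductive argument in the previous paragraph---specifically the index estimate $d^n \leq [G_i : z(G)]$, which couples the reducedness of the splitting with the presence of an infinite central subgroup inside $G_i$.
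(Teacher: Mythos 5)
Your proof is correct and follows essentially the same approach as the paper's: you establish that $G_v$ equals an edge group (hence is infinite cyclic) via the fixed geodesic, then walk along the geodesic between two vertices of $V_i$, using reducedness to see that each consecutive pair of vertices is $G$-translate of the other, and using $z(G)\subseteq G_i$ (Lemma 3.1(i)) to force the translating element into the centralizer. The only difference is in packaging: where the paper computes directly with a generator $c$, $c_1=tct^{-1}$, and the identity $c_1^m=c_1^{nm}$ to extract $n=1$ and $t\in C_G(G_v)$, you replace this with the index estimate $d^n\leq[G_i:z(G)]$ (giving $g\in N_G(G_i)$) together with your separately established fact $N_G(G_v)=C_G(G_v)$ — a slightly more modular phrasing of the same computation.

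One small point to watch (inherited from the statement itself): for the translates $g_j(V_i)$ to be well-defined independently of the choice of representative, and for the distinctness of the conjugates $g_jG_ig_j^{-1}$ to correspond to distinctness of coset representatives via $N_G(G_i)=H_i$, one wants $\{g_j\}$ to represent the \emph{left} cosets $gH_i$ rather than the right cosets $H_i g$ — note that $V_i$ is $H_i$-invariant, so $g_jhV_i=g_jV_i$ for $h\in H_i$, whereas $hg_jV_i$ need not equal $g_jV_i$. This is a terminology/convention issue rather than a gap in the argument.
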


\begin{proof}
Let $v\in V$ be any element, and let $v^\prime\neq v$ be fixed under $G_v$. Since $T$ is a tree,
there exists a unique reduced path $\gamma$ in $T$ which connects $v$ and $v^\prime$. 
Because $G_v$ fixes both $v$ and $v^\prime$, and because $\gamma$ is unique, $G_v$ must
also fix $\gamma$. In particular, if $e$ is the edge in $\gamma$ which is incident to $v$, then
it follows easily that $G_v=G_e$, which implies that $G_v$ is infinite cyclic. 

Let $v_1$ be the other vertex in $\gamma$ to which $e$ is incident. Since the Z-splitting is 
reduced, $v_1$ must lie in the same orbit of $v$ under the action of $G$. In other words, there
is a $t\in G$ such that $t\cdot v=v_1$. Suppose $G_v=G_e$ is generated by $c$. Then
$G_{v_1}=tG_v t^{-1}$ is generated by $c_1\equiv tct^{-1}$. Furthermore, $c\in G_e\subset
G_{v_1}$, so that $c=c_1^n$ for some $n\in \Z$. On the other hand, by Lemma 3.1(i), there exists 
a nonzero $m\in \Z$ such that $c^m\in z(G)$. Consequently, 
$$
c_1^m=(tct^{-1})^m=tc^m t^{-1}=c^m=c_1^{nm},
$$
which implies $n=1$. With $c=c_1=tct^{-1}$, it follows that $t$ lies in the centralizer
of $G_v$, and moreover, $G_{v_1}=G_v$. Repeating this argument to $v_1$, we see that
there is a $t^\prime$ lying in the centralizer of $G_v$, such that $t^\prime \cdot v=v^\prime$
and $G_{v^\prime}=G_v$. Now if we let $V(v)$ be the subset of $V$ consisting of elements 
whose isotropy subgroup equals $G_v$, and let $H(v)$ be the centralizer of $G_v$, then
$H(v)$ acts transitively on $V(v)$. 

The above analysis shows that the following relation $\sim$ on $V$ is an equivalence relation:
$v^\prime\sim v$ if and only if $G_v$ fixes $v^\prime$. The equivalence relation gives rise to
a partition of $V$. It is clear that one can choose a subset $\{V_i|i\in I\}$ of equivalence classes
such that this partition can be described as $\{g_j(V_i)| j\in J(i), i\in I\}$, where $G_i$ is the
isotropy subgroup of the vertices in $V_i$, and $g_j$, $j\in J(i)$, is some fixed representative
of the right coset of the centralizer $H_i$ of $G_i$ in $G$, with $g_j=1$ for the right coset of
$H_i$.  This completes the proof of the lemma. 

\end{proof}

\subsection{Proof of Theorem 1.5(1)}
By assumption $X$ is fiber-sum-decomposed into $X_i$ along $N_j$. This gives rise to a Z-splitting
of $\pi_1(X)$ which will be denoted by $\Lambda$, with vertex groups and edge groups given
by $\pi_1(X_i)$ and $\pi_1(N_j)$ respectively. Note that Definition 1.3(iv) implies that the
Z-splitting $\Lambda$ is reduced. Furthermore, we shall point out that by Lemma 3.1(i), 
$z(\pi_1(X))$ is contained in every edge group of $\Lambda$. On the other hand, recall that the 
fiber-sum decomposition of $X$ gives rise to a canonical injective $\s^1$-action on $X$. We
denote the orbit map by $\pi:X\rightarrow Y$, where we shall point out that $Y$ is naturally a good 
orbifold, i.e., it does not contain any bad $2$-suborbifolds. Let $\Sigma_j$ be the spherical 
$2$-suborbifold of $Y$ over which $N_j$ is Seifert fibered under $\pi$. Then it follows easily that the decomposition of $Y$ in $Y_i$ along $\Sigma_j$ is a reduced spherical decomposition, where 
$Y_i$ is the irreducible $3$-orbifold in the orbit map $\pi_i: X_i\rightarrow Y_i$ that comes with the
fiber-sum decomposition of $X$ (cf. Definition 1.3). 

Let $\Lambda_{JSJ}$ be a canonical JSJ decomposition of $\pi_1(X)$ as constructed 
in \cite{RipS}. We will show that $\Lambda_{JSJ}$ and
$\Lambda$ are equivalent as canonical JSJ decompositions of $\pi_1(X)$
as described in \cite{RipS}. To this end, 
we consider the Bass-Serre trees $T_{JSJ}$, $T$ of $\Lambda_{JSJ}$, $\Lambda$, each equipped
with the canonical action of $\pi_1(X)$. As for notations, recall that for any vertex $v$ or edge 
$e$ of $T_{JSJ}$ or $T$, the corresponding isotropy subgroups of $\pi_1(X)$ are denoted by 
$G_v$ and $G_e$ respectively. 

\begin{lemma}
For any $w\in VT$, $G_w$ fixes a vertex of $T_{JSJ}$. 
\end{lemma}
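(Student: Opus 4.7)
The plan is to show that the vertex group $G_w = \pi_1(X_i)$ admits no non-trivial elementary Z-splitting whatsoever; standard Bass–Serre theory then guarantees that $G_w$ must fix a vertex of $T_{JSJ}$, since otherwise the induced action on $T_{JSJ}$ (whose edge stabilizers are infinite cyclic by assumption) would yield such a splitting via minimal-subtree collapse.

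To prove this ``Z-indecomposability'' of $G_w$, I would exploit the central extension
\[
1\to \langle h\rangle \to \pi_1(X_i) \to \pi_1^{orb}(Y_i)\to 1,
\]
where $h$ is the homotopy class of a principal orbit of the injective $\s^1$-action on $X_i$ (injectivity follows from irreducibility of $Y_i$, cf.\ Remark (2) after Definition 1.3). Thus $h$ has infinite order and lies in $z(\pi_1(X_i))$. Suppose $\pi_1(X_i)=A\ast_C B$ (with $A\neq C\neq B$) or $A\ast_C\alpha$ is a non-trivial elementary Z-splitting. By Lemma 2.1, either $h\in C$, or else the HNN case is of the periodic form $A\ast_A\alpha'$ with $A$ infinite cyclic, forcing $\pi_1(X_i)$ to be isomorphic to the fundamental group of a torus or Klein bottle; this last alternative is ruled out separately by noting that $Y_i$ would then have to be an irreducible $3$-orbifold with $\pi_1^{orb}(Y_i)\in\{\Z,\Z\rtimes\Z_2,1\}$, which one can exclude using Lemma 5.2 together with the irreducibility of $Y_i$.

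Assuming $h\in C$, the quotient by $\langle h\rangle$ turns the elementary Z-splitting of $\pi_1(X_i)$ into a non-trivial elementary splitting of $\pi_1^{orb}(Y_i)$ over the \emph{finite} cyclic group $C/\langle h\rangle$. Such a splitting must be realized geometrically by a spherical $2$-suborbifold $\Sigma\subset Y_i$, via the orbifold sphere theorem (cf.\ \cite{BMP}). Because $Y_i$ is irreducible, $\Sigma$ bounds a discal $3$-orbifold in $Y_i$, which forces the splitting of $\pi_1^{orb}(Y_i)$ to be trivial (one side is the $\pi_1^{orb}$ of the discal, which coincides with $C/\langle h\rangle$). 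Lifting back, this contradicts the hypothesis $A\neq C\neq B$ (or the HNN non-triviality). Hence $\pi_1(X_i)$ admits no non-trivial elementary Z-splitting, completing the argument.

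The main obstacle I anticipate is the careful handling of the HNN-periodic exceptional case in Lemma 2.1(2): one must check that no vertex group $\pi_1(X_i)$ of a \emph{genuine} irreducible factor can be virtually $\Z$ of rank $2$. A secondary technicality is making precise the ``geometric realization'' of a splitting over a finite subgroup via the orbifold sphere theorem in the possibly non-manifold setting of $Y_i$; for this I would lean on the pseudo-good hypothesis built into the setup and on the statement of \cite{BMP}, Theorem 3.2, rather than re-proving anything from scratch.
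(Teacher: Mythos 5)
Your strategy is sound and closely parallels the paper's: pass to the quotient $\pi_1^{orb}(Y_i)$ by the central infinite-cyclic subgroup $\langle h\rangle$, and use irreducibility of $Y_i$ to kill a putative splitting over a finite group. The differences are in how the reduction and the final contradiction are packaged, and two of them are substantive.

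First, the reduction from ``$G_w$ does not fix a vertex of $T_{JSJ}$'' to ``$\pi_1(X_i)$ admits a nontrivial elementary Z-splitting'' needs more care than you give it. Your phrase ``whose edge stabilizers are infinite cyclic by assumption'' refers to the stabilizers for the $\pi_1(X)$-action; the stabilizers for the restricted $G_w$-action are a priori only subgroups of infinite cyclic, so could be trivial or finite, in which case a minimal-subtree argument would produce a splitting over a trivial or finite group, not a Z-splitting. The fix is either (as the paper does) to note via Lemma 3.1(i) that $h\in z(\pi_1(X))$ fixes \emph{every} edge of $T_{JSJ}$, so $h\in G_e'$ and $G_e'$ is infinite cyclic, or to observe post hoc that centrality of $h$ forces $h$ into the edge group by Lemma 2.1, so the edge group is infinite anyway. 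Also, your ``minimal-subtree collapse'' step silently requires finite generation of $G_w$ to exclude the ascending-union alternative; the paper cites Theorem 4.12 of \cite{DD} and disposes of that alternative (case (b)) by a torsion-free finite-index subgroup argument rather than appealing to finite generation. Both routes work, but you should make explicit which hypothesis you are using.

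Second, and more seriously, your use of ``the orbifold sphere theorem'' to realize the algebraic splitting of $\pi_1^{orb}(Y_i)$ over a finite group by an embedded spherical $2$-suborbifold is not what the sphere theorem gives. The sphere theorem produces an essential spherical $2$-suborbifold when $\pi_2^{orb}\neq 0$; the statement that a \emph{prescribed} algebraic splitting over a finite subgroup is realized by an embedded sphere is a Kneser-type theorem, which is considerably harder and I do not believe is stated in \cite{BMP} in the form you need. The paper avoids this entirely: once you have a nontrivial elementary splitting of $\pi_1^{orb}(Y_i)$ over a finite group, Stallings' End Theorem gives $\geq 2$ ends, whereas irreducibility of $Y_i$ forces $\pi_1^{orb}(Y_i)$ to have at most one end. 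This group-theoretic shortcut is both cleaner and circumvents the unjustified geometric realization in your argument. You should substitute it.

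Finally, the HNN exceptional case of Lemma 2.1(2) (where $\pi_1(X_i)$ would be virtually infinite cyclic) is flagged but not actually excluded; you gesture at Lemma 5.2 and irreducibility, which can indeed be made to work, but the paper's organization (ruling out $\pi_1^{orb}(Y_i)$ being finite-or-multi-ended in one stroke) handles this uniformly without a separate case.
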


\begin{proof}
We consider the induced action of $G_w$ on the Bass-Serre tree $T_{JSJ}$, and for any vertex $v$ 
and edge $e$ of $T_{JSJ}$, we denote by $G_v^\prime$, $G_e^\prime$ the isotropy subgroups of
the $G_w$-action at $v$ and $e$ respectively. By Theorem 4.12 in \cite{DD}, there are following
three possibilities.

\begin{itemize}
\item [{(a)}] $G_w$ fixes a vertex of $T_{JSJ}$.
\item [{(b)}] There is a reduced infinite path $v_0,e_1^{\epsilon_1},v_1, e_2^{\epsilon_2}, \cdots,$ in
$T_{JSJ}$ such that 
$$
G_{v_0}^\prime\subset G_{v_1}^\prime\subset \cdots,\;\; 
G_w=\bigcup_{n\geq 0} G_{v_n}^\prime=\bigcup_{n\geq 1}G_{e_n}^\prime,
$$
and for all $n\geq 1$, $G_w\neq G_{e_n}^\prime$.
\item [{(c)}] Some element of $G_w$ translates some edge $e$ of $T_{JSJ}$, and for $C\equiv G_e^\prime$,
either $G_w=B\ast_C D$ with $B\neq C\neq D$, or $G_w=B\ast_C$.
\end{itemize}

It remains to show that neither (b) nor (c) can occur. First, applying Lemma 3.1(i) to the 
$\pi_1(X)$-action on $T_{JSJ}$, we see that $z(\pi_1(X))$ fixes every edge of $T_{JSJ}$. 
Secondly, note that there is a factor $X_i$ such that $G_w$ is conjugate to the subgroup
$\pi_1(X_i)$ in $\pi_1(X)$. Finally, if $h$ denotes the homotopy class of a regular fiber of 
$\pi:X\rightarrow Y$, then $h\in z(\pi_1(X))\cap G_w$, so that $h\in G^\prime_e$ for every 
edge $e$ of $T_{JSJ}$. 

With the preceding understood, we consider case (b) first. In this case, we have 
$$
\pi_1^{orb}(Y_i)\cong\pi_1(X_i)/\langle h\rangle\cong G_w/\langle h\rangle 
= \bigcup_{n\geq 1}G_{e_n}^\prime/\langle h\rangle=\bigcup_{n\geq 1} F_n,
$$
where $F_n$ is a finite group, $F_n\subset F_{n+1}$, and $G_w/\langle h\rangle \neq F_n$ for all 
$n\geq 1$. Clearly, $\pi_1^{orb}(Y_i)$ can not be finite. To rule out the case where $\pi_1^{orb}(Y_i)$ 
is infinite, we note that $\pi_1^{orb}(Y_i)$ has a finite index torsion-free subgroup $H$ by the 
Geometrization Theorem (cf. \cite{BLP, MM}). Let $\tilde{H}$ be the corresponding subgroup of 
$G_w/\langle h\rangle$ under $\pi_1^{orb}(Y_i)\cong G_w/\langle h\rangle$. Then 
$\tilde{H}=\bigcup_{n\geq 1} F_n\cap \tilde{H}=\bigcup_{n\geq 1} \emptyset=\emptyset$, which is a contradiction. Hence case (b) is excluded.

For case (c), we set $C^\prime=C/\langle h\rangle$, $B^\prime=B/\langle h\rangle$,
and $D^\prime=D/\langle h\rangle$, then 
$$
G_w/\langle h\rangle = B^\prime\ast_{C^\prime} D^\prime, \mbox{ with } B^\prime\neq C^\prime\neq D^\prime,
\mbox { or } G_w/\langle h\rangle =B^\prime\ast_{C^\prime}.
$$
Since $C^\prime$ is a finite group, $G_w/\langle h\rangle$ has more than one ends by Stallings End Theorem (cf. e.g., \cite{SW}, Theorem 6.1).  However, since $Y_i$ is irreducible, the number of ends of 
$\pi_1^{orb}(Y_i)$ is at most $1$, which is a contradiction to $\pi_1^{orb}(Y_i)\cong G_w/\langle h\rangle$.
This rules out case (c), and the lemma is proved. 

\end{proof}

\begin{lemma}
There exists a $\pi_1(X)$-equivariant bijection $\phi: VT\rightarrow VT_{JSJ}$. In particular, 
for any $w\in VT$, $G_w=G_{\phi(w)}$. 
\end{lemma}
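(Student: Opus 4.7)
The plan is to use Lemma 3.4 to define $\phi$ by sending each $w \in VT$ to the unique vertex of $T_{JSJ}$ fixed by $G_w$, and then to establish bijectivity by constructing a symmetric inverse $\psi: VT_{JSJ}\to VT$.

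First I would upgrade Lemma 3.4 to a uniqueness statement. The fix-set of $G_w$ in $T_{JSJ}$ is a non-empty subtree; if it contained an edge, then $G_w$ would be contained in the corresponding infinite cyclic edge stabilizer and hence would itself be infinite cyclic. However $G_w$ is conjugate to a vertex group $\pi_1(X_i)$ of the fiber-sum decomposition, which contains the central cyclic subgroup $\langle h\rangle$ generated by the $\s^1$-fiber. A short case analysis---using the irreducibility of $Y_i$, the reducedness built into Definition 1.3(iv), and the standing hypotheses on $\pi_1(X)$ (single-ended, not the $\pi_1$ of a torus or Klein bottle)---would rule out $\pi_1(X_i)$ being infinite cyclic. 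Defining $\phi(w)$ to be the unique fixed vertex then yields a well-defined $\pi_1(X)$-equivariant map with $G_w \subseteq G_{\phi(w)}$ by construction.

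Next, I would construct a map $\psi: VT_{JSJ}\to VT$ in the reverse direction by running the proof of Lemma 3.4 with the two trees exchanged. One considers the action of $G_v$ on $T$ and applies the trichotomy (a)-(c) of Theorem 4.12 in \cite{DD}: case (a) is the desired conclusion, while cases (b) and (c) must be excluded. The necessary ingredients are symmetric to those used in Lemma 3.4: every edge stabilizer of $T$ contains $\langle h\rangle$ by Lemma 3.1(i) applied to the reduced Z-splitting $\Lambda$, and one must show that $G_v/\langle h\rangle$ cannot exhibit the infinite ascending union of (b) nor the amalgam/HNN structure of (c). This requires structural information about the vertex groups of the canonical JSJ $\Lambda_{JSJ}$, drawn from the Rips-Sela theory combined with the absence of hyperbolic-hyperbolic splittings of $\pi_1(X)$ (Lemma 3.1(ii)). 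Uniqueness of $\psi(v)$ follows as before from the non-cyclicity of $G_v$.

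Finally, with $\phi$ and $\psi$ in hand, the chain of inclusions $G_w \subseteq G_{\phi(w)} \subseteq G_{\psi(\phi(w))}$ together with the uniqueness of the fixed vertices forces $\psi(\phi(w)) = w$, and symmetrically $\phi(\psi(v)) = v$. This simultaneously produces the bijection $\phi$ and the stabilizer equality $G_w = G_{\phi(w)}$. The main obstacle is the symmetric step in the previous paragraph: whereas the vertex groups $\pi_1(X_i)$ of $\Lambda$ admit a concrete geometric description that drives the exclusion of cases (b) and (c) in Lemma 3.4, the vertex groups of $\Lambda_{JSJ}$ are characterized only abstractly by the Rips-Sela framework, so the analogous exclusion will require a careful invocation of their structural classification under the hypotheses already established for $\pi_1(X)$.
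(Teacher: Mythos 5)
Your high-level plan --- define $\phi$ by fixed vertices, build a reverse map $\psi$, and compose --- is the right shape, but there are two substantive problems with the execution, one of which the paper's proof is specifically designed to avoid.

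The critical gap is your claim that a ``short case analysis \dots would rule out $\pi_1(X_i)$ being infinite cyclic.'' This cannot be done in general, and the paper never attempts it. A vertex group $\pi_1(X_i)$ can be infinite cyclic: this occurs when $\pi_1^{orb}(Y_i)$ is a nontrivial finite cyclic group and the restriction of the $\s^1$-bundle makes the extension $1\to\langle h\rangle\to\pi_1(X_i)\to\pi_1^{orb}(Y_i)\to 1$ split as $\Z$ (e.g.\ $Y_i=\s^3/\Z_m$ with a singular unknot of multiplicity $m$). Moreover, if $\Lambda$ has a loop at such a vertex whose edge group equals $\pi_1(X_i)$, then the vertex $w$ lies in the set $V$ of Lemma~3.2, and $G_w$ fixes \emph{several} vertices of $T_{JSJ}$; there is no ``unique fixed vertex'' to name $\phi(w)$. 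Definition~1.3(iv) only forces the edge group to be proper for edges with distinct endpoints, so it does not exclude loops with $G(e)=G(v)$; and neither single-endedness nor the exclusion of torus/Klein-bottle $\pi_1$ forbids such a piece from occurring inside a larger graph. The paper's proof is structured precisely around this issue: it first establishes $G_v=G_w$ by the back-and-forth argument, then splits into the case $w\in VT\setminus V$ (where uniqueness does hold, proved by the contrapositive argument you have in mind) and the case $w\in V$ (where uniqueness fails, and $\phi$ is built by choosing a base point in each class $V_i$ and extending $H_i$-equivariantly, using the coset representatives $g_j$ from Lemma~3.2). Your proposal collapses to the first case only.

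The second issue is in the symmetric direction. To show that $G_v$ (for $v\in VT_{JSJ}$) fixes a vertex of $T$, the paper does not re-run the Dicks--Dunwoody trichotomy as you propose. It directly cites the Rips--Sela construction: when $\pi_1(X)$ admits no hyperbolic-hyperbolic elementary Z-splittings (Lemma~3.1(ii)), the vertex groups of a canonical JSJ decomposition are by construction elliptic with respect to every Z-splitting of $\pi_1(X)$, in particular with respect to $\Lambda$. This is a built-in property of the JSJ machinery, not something to be re-derived from [DD], and it removes the ``main obstacle'' you flag at the end of your proposal. Reproving ellipticity for the abstractly defined JSJ vertex groups from scratch is exactly what the Rips--Sela framework is designed to make unnecessary.

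Finally, a small bookkeeping point: the existence statement you invoke at the start is Lemma~3.3 of the paper, not the result being proved.
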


\begin{proof}
First, we let $V$ (resp. $V_{JSJ}$) be the subset of $VT$ (resp. $VT_{JSJ}$) described in 
Lemma 3.2, and let $G_i$, $V_i$, $H_i$, $g_j$, $j\in J(i)$, $i\in I$, be as defined in 
Lemma 3.2 for $VT$. 

Given any $w\in VT$, $G_w$ fixes a vertex $v\in VT_{JSJ}$ by Lemma 3.3. On the other hand,
since $\pi_1(X)$ has no hyperbolic-hyperbolic splittings (cf. Lemma 3.1(ii)), it follows from the construction of canonical JSJ decompositions in \cite{RipS} that the action of $G_v$ on $T$ 
must also fix a vertex, say $w^\prime$. One has the obvious inclusion relations
$G_w\subset G_v\subset G_{w^\prime}$. By Lemma 3.2, one always has $G_w=G_{w^\prime}$,
so that $G_v=G_w$ must hold. We will discuss according to (i) $w\in VT\setminus V$, (ii) $w\in V$.

In case (i), $w^\prime=w$. We claim that $v\in VT_{JSJ}\setminus V_{JSJ}$, in particular, $v$ is uniquely determined by $w$. To see this, suppose there is a $v_1\neq v$ such that 
$G_{v_1}=G_v$. Then by Lemma 3.2
there is a $t$ lying in the centralizer of $G_v$ such that $v_1=t\cdot v$. In particular, $t$ is not
in $G_v=G_w$.  This implies that $t\cdot w\neq w$, but $G_{t\cdot w}=G_w$, which is a contradiction
to the assumption that $w\in VT\setminus V$.  With this understood, we define $\phi$ from 
$VT\setminus V$ to $VT_{JSJ}\setminus V_{JSJ}$ by setting $\phi(w)=v$.
It follows easily that $\phi$ is a $\pi_1(X)$-equivariant bijection
between $TV\setminus V$ and $VT_{JSJ}\setminus V_{JSJ}$. (The surjectivity part uses the fact
that for any vertex $v\in VT_{JSJ}$, the action of $G_v$ on $T$ fixes a vertex. This is a consequence
of Lemma 3.1(ii) by the construction of JSJ decompositions in \cite{RipS}.)

In case (ii) where $w\in V$, $v$ also lies in $V_{JSJ}$ by a similar argument. We shall define 
$\phi:V\rightarrow V_{JSJ}$ as follows. Let $V_{i,JSJ}$ be the subset of $V_{JSJ}$ consisting of
vertices whose isotropy subgroups are given by $G_i$. Then for any fixed choice of $w_i\in V_i$,
$v_i\in V_{i,JSJ}$, there is a $H_i$-equivariant bijection $\phi:V_i\rightarrow V_{i,JSJ}$ sending
$w_i$ to $v_i$. Using the elements $g_j$, $j\in J(i)$, we can uniquely extend $\phi$ to a
$\pi_1(X)$-equivariant bijection from $\bigcup_{j\in J(i)} g_j(V_i)$ to 
$\bigcup_{j\in J(i)} g_j(V_{i,JSJ})$, which defines $\phi$ from $V$ to $V_{JSJ}$. This completes
the proof of the lemma. 

\end{proof}

According to Rips-Sela \cite{RipS}, Theorem 7.1, canonical JSJ decompositions of a single-ended, 
finitely presented group $G$ are determined up to the following equivalence relation: the
Bass-Serre trees are $G$-homotopy equivalent relative to the set of vertices. With this 
understood, Theorem 1.5(1) follows from part (1) of the following proposition. In (2)-(4) we 
list some consequences of (1) which will be used later in the proofs of Theorem 1.5(2), 
Theorem 1.1, and Theorem 1.2. 

\begin{proposition}
(1) There exist subdivisions $T^\prime$, $T^\prime_{JSJ}$ of $T$, $T_{JSJ}$ respectively, and
$\pi_1(X)$-equivariant simplicial maps $h_1: T^\prime\rightarrow T_{JSJ}$, 
$h_2: T^\prime_{JSJ}\rightarrow T$ extending $\phi$ and $\phi^{-1}$ ($\phi$ as in Lemma 3.4),
such that $h_2\circ h_1$ and $h_1\circ h_2$ are $\pi_1(X)$-homotopic, relative to the set of vertices, 
to the corresponding identity maps. 

(2) There exists a bijection $\hat{\phi}: V\Lambda\rightarrow V\Lambda_{JSJ}$, such that for any
factor $X_i$ of the fiber-sum decomposition of $X$, $\pi_1(X_i)$ is conjugate 
in $\pi_1(X)$ to the vertex group at the vertex $\hat{\phi}(X_i)$ of $\Lambda_{JSJ}$. 
In particular, the number of factors $X_i$ and the conjugacy classes of subgroups 
$\pi_1(X_i)$ depend only on $\pi_1(X)$.

(3) The cardinality of $\{N_j\}$ depends only on $\pi_1(X)$. 

(4) For any $N_j$, there is an edge $e_j$ of the graph of $\Lambda_{JSJ}$ such that 
$\pi_1(N_j)$ is conjugate in $\pi_1(X)$ to the edge group at $e_j$, and vice versa. In 
particular, the set of conjugacy classes
of subgroups $\pi_1(N_j)$ depends only on $\pi_1(X)$. 
\end{proposition}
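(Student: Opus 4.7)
The heart of the proposition is part (1), which establishes the Rips-Sela equivalence of the Bass-Serre trees of $\Lambda$ and $\Lambda_{JSJ}$; parts (2)--(4) then follow from this equivalence combined with the reducedness of both Z-splittings. For (1), my plan is to upgrade the $\pi_1(X)$-equivariant vertex bijection $\phi\colon VT\to VT_{JSJ}$ of Lemma 3.4 into an equivariant simplicial map by means of a subdivision. For each edge $e=[w_1,w_2]$ of $T$, let $\gamma_e$ denote the unique reduced edge-path in the tree $T_{JSJ}$ from $\phi(w_1)$ to $\phi(w_2)$. Form $T'$ by subdividing each such $e$ into $|\gamma_e|$ edges, and define $h_1\colon T'\to T_{JSJ}$ as the simplicial map sending the subdivided copy of $e$ onto $\gamma_e$ in order. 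The $\pi_1(X)$-equivariance of $\phi$ together with uniqueness of reduced paths in a tree makes $h_1$ well-defined and $\pi_1(X)$-equivariant. Construct $h_2\colon T'_{JSJ}\to T$ symmetrically from $\phi^{-1}$. After a common equivariant subdivision that makes both compositions simplicial, $h_2\circ h_1$ and $h_1\circ h_2$ fix every vertex and carry each edge onto a path with the same endpoints; since $T$ and $T_{JSJ}$ are trees, these paths are contractible rel endpoints by backtracking, and selecting such contractions along an orbit transversal extends equivariantly to the required $\pi_1(X)$-homotopies rel vertices.

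For (2), the equivariant bijection $\phi$ descends to the bijection $\hat\phi\colon V\Lambda\to V\Lambda_{JSJ}$ between the quotient vertex sets, and the identity $G_w=G_{\phi(w)}$ from Lemma 3.4 passes to the stated conjugacy of vertex groups under the Bass-Serre reconstruction. For (3), the descended maps between the subdivided quotient graphs $\Gamma',\Gamma'_{JSJ}$ are homotopy equivalences; since subdivision preserves the first Betti number of a finite graph, $b_1(\Gamma)=b_1(\Gamma_{JSJ})$, and combined with $|V\Gamma|=|V\Gamma_{JSJ}|$ from (2) and the identity $|E\Gamma|=|V\Gamma|-1+b_1(\Gamma)$ for connected graphs, this forces $|E\Gamma|=|E\Gamma_{JSJ}|$, which is the cardinality of $\{N_j\}$. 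For (4), given an edge $e=[w_1,w_2]\in ET$, the stabilizer $G_e=G_{w_1}\cap G_{w_2}=G_{\phi(w_1)}\cap G_{\phi(w_2)}$ fixes $\gamma_e$ edgewise without inversion, giving containments $G_e\subseteq G_{e'}$ for each edge $e'$ on $\gamma_e$; swapping the roles of $T$ and $T_{JSJ}$ via $h_2$ yields reverse containments, and reducedness of both $\Lambda$ and $\Lambda_{JSJ}$ should then force these inclusions to be equalities, producing the desired matching of edge orbits up to conjugacy.

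The main obstacle I anticipate lies in the final step of part (4): rigorously showing that the chain of inclusions of infinite cyclic edge groups (all of which contain the infinite cyclic $z(\pi_1(X))$ by Lemma 3.1(i)) must collapse to equalities. This requires delicate use of reducedness on both sides, and amounts essentially to showing that after further equivariant subdivision one may assume $\phi$ is a graph isomorphism, so that $\gamma_e$ has length one for every $e$. Once this rigidity of the vertex bijection is in hand, the edge-orbit bijection and its compatibility with edge-group conjugacy classes are immediate; and for (1), together with the vertex-group matching of Lemma 3.4, this is precisely the equivalence relation of Rips-Sela's Theorem 7.1, yielding Theorem 1.4(1).
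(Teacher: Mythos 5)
Your treatment of parts (1)--(3) matches the paper's proof: subdividing each edge $e$ of $T$ according to the length of the reduced path $\gamma_e$ from $\phi(\iota e)$ to $\phi(\tau e)$, invoking uniqueness of reduced paths for equivariance, and using the fact that trees are contractible for the relative homotopies; then descending $\phi$ to $\hat\phi$ for (2) and comparing Euler characteristics (equivalently, Betti numbers plus vertex counts) of the homotopy-equivalent quotient graphs for (3). No issues there.

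For part (4), however, there is a genuine gap, which you yourself flag. You obtain $G_e\subseteq G_{e'}$ for every edge $e'$ lying on $\gamma_e=h_1(e)$, and then assert that ``swapping roles'' plus reducedness ``should'' force equality -- and your proposed repair is to show that $\phi$ can be upgraded to a $\pi_1(X)$-equivariant graph isomorphism, so that $\gamma_e$ always has length one. That is a strictly stronger statement than what is needed, it is not what the paper proves, and there is no reason to expect it to hold: the conclusion of (4) is only a bijection between edge \emph{orbits} together with a matching of edge-stabilizer conjugacy classes, which does not imply incidence-preserving rigidity of the trees. The mechanism the paper actually uses is the one you already set up in (1) but did not exploit: since $h_2\circ h_1$ fixes $\iota e$ and $\tau e$ and $T$ is a tree, the reduced form of the path $h_2(h_1(e))$ is $e$ itself, so $e$ must appear (with orientation) somewhere along $h_2(h_1(e))$. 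This lets you pick a \emph{specific} edge $e'$ on $h_1(e)$ such that $e$ lies on $h_2(e')$. For that particular $e'$ you get both inclusions at once: $G_e\subseteq G_{e'}$ because $e'$ lies on $h_1(e)$, and $G_{e'}\subseteq G_e$ because $e$ lies on $h_2(e')$, hence $G_e=G_{e'}$, with no need to collapse the whole chain or to invoke reducedness at this step. Replacing your final paragraph by this ``$e$ appears in $h_2\circ h_1(e)$'' argument closes the gap.
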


\begin{proof}
Fixing a choice of $\phi$ in Lemma 3.4, we shall define the subdivision $T^\prime$ of $T$ and the
simplicial map $h_1: T^\prime\rightarrow T$ as follows. For any edge $e\in ET$, there is a unique
reduced path in $T_{JSJ}$ which starts from $\phi(\iota e)$ and ends at $\phi(\tau e)$. There is a
unique subdivision of $e$ such that $\phi$ can be extended to a simplicial map over $e$. Doing this 
to every edge of $T$, we obtained the subdivision $T^\prime$ and the simplicial map $h_1$. The whole construction is clearly $\pi_1(X)$-equivariant because $\phi$ is $\pi_1(X)$-equivariant and 
reduced paths with fixed ends in a tree are unique. The subdivision $T^\prime_{JSJ}$ and the simplicial map $h_2$ are constructed similarly with $\phi$ replaced by $\phi^{-1}$. One can further subdivide $T^\prime$ (still denoted by $T^\prime$ for simplicity) so that $h_1$ can be regarded 
as a simplicial map to the subdivision $T^\prime_{JSJ}$ of $T_{JSJ}$. With this understood, 
$h_2\circ h_1: T^\prime\rightarrow T$ is $\pi_1(X)$-homotopic to the identity map 
relative to the set of vertices $VT$ because (i) it is identity on $VT$, 
and (ii) $T$ is a tree. The statement about $h_1\circ h_2$ follows similarly. 
This finishes the proof of part (1). 

Part (2) is a direct consequence of Lemma 3.4.  For part (3), recall that the set of edges of 
$\Lambda$ is identified with the set $\{N_j\}$. With this understood, observe that the underlying graphs of $\Lambda$ and $\Lambda_{JSJ}$, which are given by $T/\pi_1(X)$ and 
$T_{JSJ}/\pi_1(X)$ respectively, are homotopy equivalent, so that they have the same Euler characteristics. This shows that the Euler characteristic of $\Lambda$, i.e., the number of vertices minus the number of edges of $\Lambda$, depends only on $\pi_1(X)$. It follows that the 
cardinality of $\{N_j\}$ depends only on $\pi_1(X)$.

Finally, we give a proof for part (4). For any $N_j$, we choose an edge $e$ of $T$ whose
$\pi_1(X)$-orbit corresponds to $N_j$. As we have shown in the proof of part (1), 
$h_2\circ h_1(e)$ is a path in $T$ which has the same initial and terminal points as $e$.
Since $T$ is a tree, the loop formed by $h_2\circ h_1(e)$ and $e^{-1}$ must be reduced,
which implies that $e$ lies in the image of $h_2\circ h_1(e)$. Let $e^\prime$ be an edge
of $T_{JSJ}$ lying in the path $h_1(e)$ such that $e$ is contained in the path $h_2(e^\prime)$.
Then by the construction of $h_1,h_2$ in part (1), we have 
$G_e\subset G_{e^\prime}\subset G_e$,
which implies that $G_e=G_{e^\prime}$. We name $e_j$ to be the edge of $\Lambda_{JSJ}$ 
which corresponds to the $\pi_1(X)$-orbit of $e^\prime$. Then it follows that $\pi_1(N_j)$ is 
conjugate to the edge group of $\Lambda_{JSJ}$ at $e_j$. Part (4) follows easily. 
This completes the proof of Proposition 3.5.

\end{proof}

\subsection{Proof of Theorem 1.5(2)}
Before turning to the proof of Theorem 1.5(2), we first give a geometric interpretation 
of the conjugacy classes of subgroups $\pi_1(N_j)$ in $\pi_1(X)$. We begin by observing that the submanifolds $N_j$ fall into two different types as follows. Let $\Gamma$ be the subgroup of $\pi_1(X)$ generated by the homotopy class of a regular fiber of $\pi: X\rightarrow Y$. Then $N_j$ falls into two cases 
according to (i) $\Gamma=\pi_1(N_j)$, or (ii) $\Gamma$ is a proper subgroup of $\pi_1(N_j)$.
It is clear that case (i) corresponds to the case where $\Sigma_j$ is an ordinary $2$-sphere. 

With the preceding understood, we have

\begin{lemma}
(1) Suppose $\Gamma$ is a proper subgroup of $\pi_1(N_j)$ for some $j$. Then for any $N_k$, if $g^{-1}\pi_1(N_j)g\subset \pi_1(N_k)$ for some $g\in \pi_1(X)$, then $g^{-1}\pi_1(N_j)g=\pi_1(N_k)$.
In particular, if $\pi_1(N_j)=z(\pi_1(X))$, then $\pi_1(N_k)=z(\pi_1(X))$ for any $k$.

(2) Let $N_j,N_k$ be given which are over $\Sigma_j$, $\Sigma_k$ respectively. Suppose
there are components $\gamma_j$, $\gamma_k$ of the singular set of $Y$ such that
$\Sigma_j\cap \gamma_j\neq \emptyset$, $\Sigma_k\cap \gamma_k\neq \emptyset$, and
suppose that $\pi_1(N_j)$, $\pi_1(N_k)$ are conjugate in $\pi_1(X)$. Then $\gamma_j$,
$\gamma_k$ are equivalent in the following sense: either $\gamma_j=\gamma_k$, or
there are components of the singular set of $Y$, $\gamma_0, \gamma_1, \cdots, \gamma_N$, 
and spherical $2$-suborbifolds $\Sigma_1,\cdots,\Sigma_N\in \{\Sigma_j\}$, such that
$$
\gamma_{\alpha-1} \cap \Sigma_\alpha\cap \gamma_\alpha\neq \emptyset, \;\;
\alpha=1,2,\cdots, N.
$$
\end{lemma}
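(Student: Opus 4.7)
The plan is to analyze both parts inside the Bass--Serre tree $T$ of the $Z$-splitting $\Lambda$ associated to the fiber-sum decomposition, exploiting the fact that every edge group $\pi_1(N_j)$ is infinite cyclic and contains the central regular-fiber class $h\in z(\pi_1(X))$. Write $\pi_1(N_j)=\langle a_j\rangle$ with $h=a_j^{m_j}$, where $m_j\geq 1$ is the multiplicity of $y_{j,1}$ (equivalently of $y_{j,2}$); the hypothesis of part (1) that $\Gamma\subsetneq \pi_1(N_j)$ is exactly $m_j\geq 2$.

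For part (1), the inclusion $g^{-1}\pi_1(N_j)g\subset\pi_1(N_k)$ gives $g^{-1}a_j g=a_k^t$ for some positive integer $t$, and centrality of $h=g^{-1}hg$ forces $tm_j=m_k$; the task is to show $t=1$. I plan to argue by contradiction: if $t\geq 2$, the edges $e'=g^{-1}\cdot \tilde N_j$ and $\tilde N_k$ of $T$ are distinct with $G_{e'}\subsetneq G_{\tilde N_k}$, so $G_{e'}$ fixes the nontrivial reduced tree-path between them and in particular fixes an intermediate vertex $v$, whose stabilizer $G_v$ is conjugate to some vertex group $\pi_1(X_i)$. Reducing modulo $\langle h\rangle$ inside $\pi_1^{orb}(Y_i)=G_v/\langle h\rangle$, the cyclic subgroup $\bar G_{e'}\cong\Z_{m_j}$ would sit properly inside $\bar G_{\tilde N_k}\cong\Z_{m_k}$; but each of these subgroups is (a conjugate of) the isotropy subgroup of a singular point of the irreducible $3$-orbifold $Y_i$, and such isotropy subgroups are maximal among finite cyclic subgroups of $\pi_1^{orb}(Y_i)$, yielding the desired contradiction. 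The ``in particular'' clause is then immediate: if $\pi_1(N_j)=z(\pi_1(X))$, then by Lemma~3.1(i) $\pi_1(N_j)\subset \pi_1(N_k)$ for every $k$, so applying the main statement with $g=1$ gives equality.

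For part (2), the conjugacy $\pi_1(N_j)=g\pi_1(N_k)g^{-1}$ together with part (1) forces $m_j=m_k$, and makes $\tilde N_j$ and $g\cdot \tilde N_k$ two edges of $T$ sharing the stabilizer $\pi_1(N_j)$. This common stabilizer therefore pointwise fixes the reduced path between them, and part (1), applied to each intermediate edge group (which automatically contains $\pi_1(N_j)$), shows that every edge on this path has stabilizer exactly $\pi_1(N_j)$, hence corresponds up to the $\pi_1(X)$-action to some $N_{\ell_\alpha}\in\{N_j\}$. Projecting the path down to $Y$ yields a sequence $\Sigma_1,\ldots,\Sigma_N$ of decomposition $2$-suborbifolds traversing pieces $Y_{i_0},\ldots,Y_{i_N}$; inside each $Y_{i_\alpha}$, the common edge-group image in $\pi_1^{orb}(Y_{i_\alpha})$ is the meridian class of a unique singular circle $\gamma_\alpha$ of multiplicity $m_j$, and the two spherical $2$-suborbifolds adjacent to $v_\alpha$ in $T$ must each meet this $\gamma_\alpha$, producing the required chain with $\gamma_0=\gamma_j$ and $\gamma_N=\gamma_k$.

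The main obstacle I anticipate is the maximality claim invoked in part (1): to make it rigorous I expect to pass to the orbifold universal cover $\widetilde{Y_i}$, which is a manifold by the orbifold Geometrization Theorem already used elsewhere in the paper, and use that the fixed-point set of a finite-order deck transformation is an embedded submanifold whose pointwise stabilizer in $\pi_1^{orb}(Y_i)$ is constant along connected components; a strict inclusion of such point stabilizers would force a corresponding strict inclusion of connected fixed-point loci in $\widetilde{Y_i}$, which is impossible.
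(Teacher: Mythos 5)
Your proposal takes a genuinely different route from the paper. The paper works directly inside the $3$-orbifold $Y$: for part (1) it invokes the Equivariant Loop Theorem to produce an embedded disc (hence a sphere $\Sigma$) meeting $\gamma_j$ and $\gamma_k$ each once and no other singular circle, and then uses pseudo-goodness (a sphere with exactly two singular points is a football, hence the multiplicities agree); for part (2) it explicitly isotopes and surgers this $\Sigma$ to make it disjoint from the decomposition spheres $\{\Sigma_j\}$, reading off the chain of singular circles from the surgeries. You instead work entirely in the Bass--Serre tree and reduce everything to the statement that edge stabilizers along the path joining $g^{-1}\cdot\tilde N_j$ to $\tilde N_k$ are all equal. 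This is an attractive algebraicization and could work, but there are two points that need strengthening before it is a proof.

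First, the reduction to a single ``intermediate vertex $v$'' is too coarse. If $v$ is not an endpoint of $e'=g^{-1}\cdot\tilde N_j$, then $\bar G_{e'}$ is not visibly a meridian/boundary subgroup of $\pi_1^{orb}(Y_i)=G_v/\langle h\rangle$ --- it is merely a finite cyclic subgroup, possibly only \emph{conjugate} to such a meridian, and nestedness is not preserved under conjugation. The correct bookkeeping is to walk along the path edge by edge: if $e_\alpha$, $e_{\alpha+1}$ are consecutive edges sharing vertex $v_\alpha$, then $\bar G_{e_\alpha}$ and $\bar G_{e_{\alpha+1}}$ \emph{are} honest isotropy subgroups of singular points in $Y_{i_\alpha}$, and one knows $G_{e'}\subset G_{e_\alpha}$ for all $\alpha$; one then shows $G_{e_\alpha}=G_{e_{\alpha+1}}$ step by step. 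This is implicit in your part (2) sketch but should be made the spine of part (1) as well.

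Second, the key geometric input --- that in a pseudo-good irreducible $3$-orbifold two distinct singular circles cannot have strictly nested isotropy subgroups --- is stated but not really proved. It is true, and the route you indicate (pass to the universal cover, which is a manifold, and compare fixed-point loci) does work, but the crucial fact one needs is that for a non-trivial finite cyclic group $C$ acting smoothly and orientation-preservingly on $\s^3$ or on $\R^3$ with non-empty fixed set, $\mathrm{Fix}(C)$ is a single unknotted circle or line. This is a Smith-conjecture / equivariant-geometrization statement, not an elementary one; and you additionally need to justify that the universal cover of each $Y_i$ is $\s^3$ or $\R^3$ and that the deck group acts by isometries or at least geometrically (again geometrization). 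The paper's route through the Equivariant Loop Theorem and pseudo-goodness avoids this detour: once one has a sphere hitting exactly two singular circles, pseudo-goodness alone forces the multiplicities to be equal, with no analysis of nested stabilizers needed. So your argument is a viable alternative, but it should be recognized as trading explicit sphere surgery for a different (and not lighter) piece of geometrization input, and both the path bookkeeping and the nested-stabilizer lemma need to be written out carefully.
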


\begin{proof}
For part (1), let $N_j,N_k$ be Seifert fibered over $\Sigma_j$, $\Sigma_k$ under
$\pi:X\rightarrow Y$. Since $\Gamma$
is a proper subgroup of $\pi_1(N_j)$ and $g^{-1}\pi_1(N_j)g\subset \pi_1(N_k)$ for some $g\in \pi_1(X)$, 
$\Gamma$ is also a proper subgroup of $\pi_1(N_k)$. Consequently, there are
components $\gamma_j$, $\gamma_k$ of the singular set of $Y$, such that 
$\Sigma_j\cap \gamma_j\neq \emptyset$, $\Sigma_k\cap \gamma_k\neq \emptyset$.
If $\gamma_j=\gamma_k$, one clearly has $g^{-1}\pi_1(N_j)g=\pi_1(N_k)$ as claimed.

Suppose $\gamma_j\neq\gamma_k$. We denote by $Y_0$ the $3$-orbifold obtained from $Y$ by
removing a regular neighborhood of all singular circles of $Y$ except $\gamma_k$. Note that $Y_0$
is a good $3$-orbifold as $Y$ is good. We let $\hat{Y}_0$ be a $3$-manifold cover of $Y_0$. 
We shall apply the Equivariant Loop Theorem (cf. e.g. \cite{BMP}, Theorem 3.19) to $\hat{Y}_0$
as follows. Denote by $F$ a component of $\partial \hat{Y}_0$ which contains the pre-image of
a meridian of $\gamma_j$. Then observe that the assumption $g^{-1}\pi_1(N_j)g\subset \pi_1(N_k)$ for some $g\in \pi_1(X)$ implies that $F$ is not $\pi_1$-injective. Hence by the Equivariant Loop Theorem,
there is an equivariant compression $2$-disc $\hat{D}$ in $\hat{Y}_0$ with $\partial \hat{D}\subset F$. 
The group action on $\hat{D}$ contains exactly one fixed point, which implies that the image of $\hat{D}$
under the covering map $\hat{Y}_0\rightarrow Y_0$ is an embedded $2$-disc $D$ in $|Y_0|$ intersecting
$\gamma_k$ at exactly one point. Furthermore, it follows easily that $\partial D$ must be a meridian
of $\gamma_j$. Closing up $D$ in $|Y|$, we obtain an embedded $2$-sphere $\Sigma$, which intersects each of $\gamma_j$, $\gamma_k$ at exactly one point and intersects no other singular circles. 
Since $Y$ contains no bad $2$-suborbifolds, it follows that $\gamma_j$, $\gamma_k$ must have the
same multiplicity, which implies that $g^{-1}\pi_1(N_j)g=\pi_1(N_k)$ as claimed. If 
$\pi_1(N_j)=z(\pi_1(X))$, then $\pi_1(N_j)\subset \pi_1(N_k)$ for any $k$ by Lemma 3.1(i),
which implies that $\pi_1(N_k)=z(\pi_1(X))$ for any $k$. This finishes off the proof of part (1). 

Next we prove part (2). The idea is to show that up to replacing one or both of $\gamma_j$,
$\gamma_k$ by some singular circles that are equivalent in the sense described in 
part (2) of the lemma, the embedded $2$-sphere $\Sigma$ which we constructed in the previous 
paragraph can be modified so that it lies in
the complement of the spherical $2$-suborbifolds $\{\Sigma_j\}$. To this end, we first perturb
$\Sigma$ so that it intersects each element of $\{\Sigma_j\}$ transversely and the intersection
occurs in the complement of the singular set of $Y$. Now we fix our attention on a 
$\Sigma^\prime\in\{\Sigma_j\}$ such that $\Sigma\cap \Sigma^\prime\neq \emptyset$. Let 
$l\in \Sigma\cap\Sigma^\prime$ be a circle (if there is any) which bounds a disc 
$D\subset \Sigma^\prime$ such that (i) $D$ contains no singular points, (ii) $D$ contains no intersection points with $\Sigma$. Let $D_1,D_2$ be the two discs into which $l$ divides 
$\Sigma$. Then both $D_1\cup D$, $D_2\cup D$ are embedded $2$-spheres in $Y$. Since $Y$
contains no bad $2$-suborbifolds, it follows easily that exactly one of $D_1$ and $D_2$, 
say $D_1$, contains no singular points. With this understood, we shall modify $\Sigma$
by replacing $D_1$ with $D$ and slightly perturbing it by an isotopy so that the new surface 
does not intersect $\Sigma^\prime$ in a neighborhood of $D$. In order to keep the notation simple, we shall still denote the resulting embedded $2$-sphere by $\Sigma$. It is easily seen that the 
above procedure has the effect of removing the component $l$ from $\Sigma\cap \Sigma^\prime$, and moreover,  it does not create new intersection points of $\Sigma$ with any element of 
$\{\Sigma_j\}$.  By repeating this procedure, we may assume now that the intersection of
$\Sigma$ with any element $\Sigma^\prime\in \{\Sigma_j\}$ is either empty, or consists of a 
union of circles each of which divides $\Sigma^\prime$ into two discs, each containing exactly
one singular point. 

One can further reduce the number of components of $\Sigma\cap\Sigma^\prime$ to at most
one. To see this, let $l,l^\prime$ be a pair of components of $\Sigma\cap\Sigma^\prime$ 
such that $l,l^\prime$ bounds an annulus $A^\prime\subset\Sigma^\prime$ and $l$ bounds 
a disc $D^\prime\subset \Sigma^\prime$ where $A^\prime, D^\prime$ do not contain any 
intersection points with $\Sigma$ (note that if the number of components of 
$\Sigma\cap\Sigma^\prime$ is greater than $1$, such a pair always exists). Then the annulus
$A\subset\Sigma$ bounded by $l,l^\prime$ does not contain any singular points, because 
otherwise, either $l$ or $l^\prime$, say $l$, will bound a disc $D\subset \Sigma$ containing no singular points, and furthermore, $D$ and a disc in $\Sigma^\prime$ bounded by $l$ form
an embedded $2$-sphere in $Y$ containing exactly one singular point, contradicting
the fact that $Y$ is pseudo-good. With this understood, we modify $\Sigma$ by replacing
the annulus $A$ with $A^\prime$, and as before, after applying a small isotopy the pair of
components $l,l^\prime$ are removed and no new intersection points are created. By repeating
this procedure, we may assume that for each $\Sigma^\prime\in\{\Sigma_j\}$, 
the intersection $\Sigma\cap\Sigma^\prime$ consists of at most one component. 

Now we are at the final stage of modifying $\Sigma$. Let $l$ be a circle of intersection of
$\Sigma$ with a $\Sigma^\prime\in \{\Sigma_j\}$ such that $l$ bounds a disc $D\subset \Sigma$ 
which does not intersect with any other elements of $\{\Sigma_j\}$. (Such $l$ always exists, or
$\Sigma$ lies in the complement of $\{\Sigma_j\}$.) Let $D^\prime\subset \Sigma^\prime$ be
a disc bounded by $l$. Then $D\cup D^\prime$ is an embedded $2$-sphere which can be
perturbed so that it lies in the complement of $\{\Sigma_j\}$. Call it $\hat{\Sigma}$, and suppose 
that $\hat{\Sigma}$ lies in $Y_i$, which is an irreducible $3$-orbifold. Furthermore, without loss of 
generality we assume $D$ contains a singular point in $\gamma_j$, and we denote by 
$\gamma_j^\prime$ the singular circle which intersects with $D^\prime$. We claim
that $\gamma_j$, $\gamma_j^\prime$ are equivalent in the sense described in part (2) of
the lemma. To see this, note that $\hat{\Sigma}$ bounds a discal $3$-orbifold in $Y_i$ by
the irreducibility of $Y_i$. In particular, there is an arc $\gamma$ lying in the singular set
of $Y_i$ which connects the two singular points on $\hat{\Sigma}$. If $\gamma$ does not 
intersect any elements of $\{\Sigma_j\}$, then $\gamma_j=\gamma_j^\prime$, hence
are equivalent. Suppose $\Sigma_1, \cdots, \Sigma_N$ are the elements of $\{\Sigma_j\}$ 
which intersect with $\gamma$. Then there are sub-arcs $I_1,\cdots,I_N$ of $\gamma$, where
$I_\alpha$ is contained in the discal $3$-orbifold in $Y_i$ bounded by $\Sigma_\alpha$,
$1\leq \alpha\leq N$. Clearly there are singular circles 
$\gamma_0,\gamma_1,\cdots,\gamma_N$ such that the end points of $I_\alpha$ lie in 
$\gamma_{\alpha-1}$, $\gamma_\alpha$ respectively.  It follows easily that $\gamma_j$, 
$\gamma_j^\prime$ are equivalent through $\gamma_0,\cdots,\gamma_N$ and $\Sigma_1,
\cdots,\Sigma_N$. With this understood, we replace $\gamma_j$ by $\gamma_j^\prime$,
and we modify $\Sigma$ by replacing $D$ by $D^\prime$. The new embedded $2$-sphere
can be perturbed slightly so that it does not intersect $\Sigma^\prime$ and no new intersection
points with elements of $\{\Sigma_j\}$ were created. Furthermore, it intersects with each of the singular circles $\gamma_k,\gamma_j^\prime$ in exactly one point and contains no
other singular points. By repeating this procedure, we obtain an embedded $2$-sphere,
which is still denoted by $\Sigma$, such that (i) $\Sigma$ is in the complement of the elements
of $\{\Sigma_j\}$, and (ii) $\Sigma$ contains exactly two singular points lying on some singular
components $\hat{\gamma}_j,\hat{\gamma}_k$, which are equivalent to $\gamma_j$,
$\gamma_k$ respectively. As we have shown earlier, $\hat{\gamma}_j,\hat{\gamma}_k$ are
equivalent, which implies that $\gamma_j$, $\gamma_k$ are equivalent. 
This finishes the proof of the lemma.

\end{proof}

In summary, the conjugacy classes of subgroups $\pi_1(N_j)$ (which are the conjugacy classes
of the edge groups of $\Lambda$) can be classified as follows: (i) there is a distinguished 
conjugacy class, i.e., the class of those $\pi_1(N_j)=\Gamma$; this conjugacy class can be 
characterized by the fact that the corresponding $\Sigma_j$ are ordinary $2$-spheres; (ii) for
any other conjugacy class where $\pi_1(N_j)$ contains $\Gamma$ as a proper subgroup, there
is an associated equivalence class of singular circles as described in Lemma 3.6(2), which
is characterized by the fact that $\pi_1(N_j)$ belongs to the conjugacy class if and only if the
corresponding $\Sigma_j$ intersects with a singular circle belonging to the equivalence
class. With this understood, we shall show in the next lemma that by modifying the embeddings
of $N_j$ via fiber-preserving isotopies (with respect to $\pi:X\rightarrow Y$) if necessary, one can bring the underlying graph of the Z-splitting $\Lambda$ into a certain normal form. We should 
point out that modifying the embeddings of $N_j$ via fiber-preserving isotopies does not 
change the conjugacy classes of the edge groups of the Z-splitting.

\begin{lemma}
For any given vertex $v$ of $\Lambda$, and any conjugacy class of edge groups of $\Lambda$
which are contained in the vertex group $G(v)$ up to conjugacy, one can modify
the embeddings of those $N_j$ via fiber-preserving isotopies, where $\pi_1(N_j)$ belongs to the given conjugacy class of edge groups, such that the Z-splitting of $\pi_1(X)$ associated to the
new fiber-sum decomposition of $X$ has the following property: for any edge $e$, if $G(e)$ 
belongs to the given conjugacy class of edge groups, then $e$ is incident to $v$. 
\end{lemma}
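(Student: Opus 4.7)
The plan is to translate the problem into a geometric statement about the base $3$-orbifold. Since the $\s^1$-action on $X$ is the restriction of the principal $\s^1$-bundle $\pi: X\rightarrow Y$, a fiber-preserving isotopy of $N_j$ in $X$ corresponds to an honest isotopy of $\Sigma_j$ in $Y$, so it suffices to modify the spherical $2$-suborbifolds $\Sigma_j$ in $Y$ (for those $j$ whose $\pi_1(N_j)$ is in the prescribed conjugacy class) so that each such $\Sigma_j$ becomes a boundary component of the closure of $Y_{i_v}$, where $Y_{i_v}$ is the irreducible piece of the spherical decomposition corresponding to the vertex $v$.

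By the analysis following Lemma 3.6, the prescribed conjugacy class of edge groups corresponds either to (a) the distinguished class $\Gamma$, in which case the $\Sigma_j$'s are nonsingular $2$-spheres, or to (b) an equivalence class $[\gamma]$ of singular circles of $Y$, with the $\Sigma_j$'s being footballs meeting singular circles in $[\gamma]$. The hypothesis that the class lies in $G(v)=\pi_1(X_{i_v})$ (up to conjugacy) forces, in case (b), at least one singular circle in $[\gamma]$ to pass through $Y_{i_v}$, while in case (a) it is automatic because $\Gamma\subset z(\pi_1(X))$ lies in every vertex group.

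I would then induct on the number $n$ of spheres $\Sigma_j$ in the given conjugacy class which are not boundary components of the closure of $Y_{i_v}$. The base case $n=0$ is trivial. For the inductive step, pick such a $\Sigma_j$. In case (b), use the equivalence chain of Lemma 3.6(2) to produce an arc $\alpha$ running along the singular set from a singular point of $\Sigma_j$ through spheres $\Sigma_{j_1},\ldots,\Sigma_{j_k}$ (each in the same conjugacy class) to a point in $Y_{i_v}$; in case (a), choose instead an arc $\alpha$ in the complement of the singular set, crossing only spheres of the prescribed class, where a finger-move may be needed to detour around spheres belonging to other classes. Then perform an isotopy of $\Sigma_j$ inside a tubular neighborhood of $\alpha$, pushing it past the $\Sigma_{j_l}$'s one at a time, until $\Sigma_j$ sits in a collar of $\partial\overline{Y_{i_v}}$. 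This single step reduces $n$ by one.

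The delicate point I expect will be checking that after each elementary slide of $\Sigma_j$ past an $\Sigma_{j_l}$, the resulting system of spheres still determines a fiber-sum decomposition in the sense of Definition 1.3, i.e.\ every piece remains irreducible and the properness condition (iv) on $\pi_1(F_{j,1})$ is maintained. The thin shell created between $\Sigma_j$ and $\Sigma_{j_l}$ after the slide is, upon capping off its boundary spherical $2$-suborbifolds, a discal $3$-orbifold, which can be absorbed into one of the two adjacent pieces by a further isotopy. The crucial input making this work is that $\Sigma_j$ and $\Sigma_{j_l}$ lie in the same conjugacy class of edge groups, so that the corresponding footballs (or ordinary $2$-spheres) have matching multiplicities at their singular points and can be made fiberwise parallel. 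Irreducibility of the resulting larger piece then follows from the irreducibility of the original $Y_i$'s together with the fact that $Y$ contains no bad $2$-suborbifolds.
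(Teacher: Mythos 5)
Your geometric translation to isotopies of $\Sigma_j$ in the base $3$-orbifold, the split into the distinguished class versus an equivalence class of singular circles, and the use of Lemma~3.6(2) to find a chain back to $Y_{i_v}$ are all on target, and the overall plan is close in spirit to the paper's. However, you correctly flag the verification that each slide still yields a valid fiber-sum decomposition as the delicate point, and the argument you sketch for it does not hold up. The claim that the thin shell between $\Sigma_j$ and $\Sigma_{j_l}$, after capping off \emph{its} boundary spherical $2$-suborbifolds, is a discal $3$-orbifold is wrong: capping off both footballs of $\Sigma\times I$ with $\Sigma\cong\s^2/\Z_m$ produces the spherical $3$-orbifold $\s^3/\Z_m$, not a discal one, and the proposed ``absorption into an adjacent piece by a further isotopy'' would amount to deleting one of the decomposing spheres, which is not what is wanted. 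Moreover the notion of ``sliding $\Sigma_j$ past $\Sigma_{j_l}$'' is not an isotopy of $\Sigma_j$ in $Y$ (they must cross), so one has to be careful about what operation on the fiber-sum data it actually encodes.

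The paper sidesteps all of this by reformulating the move entirely in terms of the connect-sum points of Definition~1.3: cut $Y$ open along $\Sigma_j$ only and fill in the discal neighborhoods of $y_{j,1},y_{j,2}$; then move $y_{j,1}$ by an isotopy \emph{along the singular circle} $\gamma'$ of the cut-open orbifold until it lies in $I_0=Y_0\cap\gamma_0$. Because the operation is an isotopy of a marked point rather than of a surface, it is manifest that the irreducible pieces $Y_i$ (and hence the factors $X_i$) are unchanged as abstract $\s^1$-orbifolds; only the incidence data of the graph changes, so there is nothing to re-verify about irreducibility or condition~(iv). The paper then organizes the induction along the equivalence chain of singular circles $\gamma_0\to\gamma_1'\to\cdots$ produced by case~(ii) of this move, which handles the reachability issue more transparently than your induction on the count of misplaced spheres. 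Finally, your extra ``finger move'' machinery for the distinguished class is unnecessary: when $\pi_1(N_j)=\Gamma$, the points $y_{j,1},y_{j,2}$ are regular, so they can be isotoped anywhere in the complement of the singular set in the cut-open orbifold without any restriction on which spheres the path crosses. I would suggest recasting your inductive step in the language of moving the connect-sum point along the singular set, which both removes the ``thin shell'' difficulty and makes the preservation of Definition~1.3 automatic.
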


\begin{proof}
First of all, we observe that modifying the embeddings of $N_j$ via fiber-preserving isotopies corresponds to moving one of the points $y_{j,1},y_{j,2}$ (cf. Definition 1.3) via isotopies, and moreover, for any $Y_i$, the edge which corresponds to $N_j$ is incident to the vertex corresponding to $X_i$ if and only if one of the points $y_{j,1},y_{j,2}$ lies in $Y_i$. 

Now with the vertex $v$ and the conjugacy class of edge groups given as in the lemma, 
we denote by $X_0$ the irreducible $\s^1$-four-manifold corresponding to $v$, 
and denote by $Y_0$ the corresponding irreducible $3$-orbifold. We first note that the case 
where the given conjugacy class of edge groups is the distinguished one, i.e., where 
$\pi_1(N_j)=\Gamma$, is trivial, because in this case
$\Sigma_j$ is an ordinary $2$-sphere and hence the points $y_{j,1},y_{j,2}$ are both lying in the complement of the singular set. For any other conjugacy class of edge groups, there is an 
associated equivalence class of singular circles as described in Lemma 3.6(2). 
Since the edge groups belonging to the given conjugacy class are contained in the vertex group $G(v)=\pi_1(X_0)$ up to conjugacy, 
there must be a singular circle belonging to the equivalence class which has nonempty intersection with the irreducible $3$-orbifold $Y_0$. We pick one such singular circle and denote it by $\gamma_0$, and we set
$I_0\equiv Y_0\cap \gamma_0\neq \emptyset$. Now consider any $N_j$ such that $\pi_1(N_j)$ belongs to the given conjugacy class of edge groups and $\Sigma_j\cap \gamma_0\neq
\emptyset$. There are two possibilities: (i) $\Sigma_j$ intersects $\gamma_0$ at two points;
(ii) $\Sigma_j$ intersects $\gamma_0$ at only one point. Consider case (i) first.  If we cut $Y$ 
open along $\Sigma_j$ and then fill in the $3$-discal neighborhood of $y_{j,1},y_{j,2}$, the 
singular circle $\gamma_0$ is turned into two components, one of which, denoted by 
$\gamma^\prime$, contains $I_0$. Without loss of generality, assume $y_{j,1}$ is contained 
in $\gamma^\prime$. Then by moving $y_{j,1}$ along $\gamma^\prime$ via isotopy if 
necessary, we may arrange such that $y_{j,1}\in I_0$. Now consider case (ii).
Let $\gamma_1$ be the singular circle which contains the other singular point on 
$\Sigma_j$. Then when we cut $Y$ open along $\Sigma_j$ and fill in the $3$-discal 
neighborhoods of $y_{j,1},y_{j,2}$, the two components $\gamma_0$,$\gamma_1$ 
are turned into one component, denoted by $\gamma^\prime$. In this case, one can always 
arrange so that $y_{j,1}\in I_0$, by moving $y_{j,1}$ via isotopy along $\gamma^\prime$. 
Note that after moving $y_{j,1}$ via isotopy and then performing the connected sum operation 
to get back to $Y$, the singular circles $\gamma_0,\gamma_1$ are turned into 
$\gamma_0^\prime$, $\gamma_1^\prime$, both of which have nonempty intersection with 
$Y_0$. With this last property understood, observe that we can now perform the operation 
described above to any $N_j$ such that $\Sigma_j\cap \gamma_1^\prime\neq \emptyset$. 
The lemma follows by an induction process. 

\end{proof}

We remark that applying Lemma 3.7 to a Z-splitting $\Lambda$ does not change the sets 
$V\Lambda$ and $E\Lambda$; it only changes the incident function. From the construction of 
Bass-Serre trees (cf. \cite{DD}), it follows particularly that neither the action of $\pi_1(X)$ on the vertex set of the Bass-Serre tree $T$ of $\Lambda$ changes, nor does the 
$\pi_1(X)$-equivariant bijection $\phi$ in Lemma 3.4. 

\vspace{3mm}

\noindent{\bf Proof of Theorem 1.5(2)}

\vspace{3mm}

First of all, we shall reformulate the problem as follows. We denote the group $\pi_1(X^\prime)$
by $G$ and identify $\pi_1(X)$ with $G$ via the given isomorphism $\alpha: \pi_1(X)
\rightarrow \pi_1(X^\prime)$. With this understood, let $\Lambda,\Lambda^\prime$ be the 
Z-splittings of $G$ associated to the given fiber-sum
decompositions of $X,X^\prime$ respectively. We shall prove that after modifying the 
embeddings of $N_j$, $N_j^\prime$ via fiber-preserving isotopies if necessary, 
$\Lambda,\Lambda^\prime$ may be arranged to be isomorphic as Z-splittings of $G$.
Note that the assumption that $N_j,N_j^\prime$ are null-homologous is equivalent to that 
the underlying graph of $\Lambda,\Lambda^\prime$ is a tree. We shall denote by 
$T,T^\prime$ the Bass-Serre tree of $\Lambda,\Lambda^\prime$ respectively. By Lemma 3.4, 
there exists a $G$-equivariant bijection $\phi$ from $VT$ onto $VT^\prime$, which induces a bijection $\hat{\phi}:V\Lambda\rightarrow V\Lambda^\prime$ and a family of isomorphisms 
of vertex groups $\rho_v: G(v) \rightarrow G({v^\prime})$ given by conjugation 
by elements of $G$, where $v\in V\Lambda$, $v^\prime=\hat{\phi}(v)\in V\Lambda^\prime$.

First consider the special case where $\pi_1(N_j)=z(G)=\pi_1(N_j^\prime)$ for all $N_j,N_j^\prime$. 
We fix a vertex $v\in V\Lambda$ and let $v^\prime=\hat{\phi}(v)\in V\Lambda^\prime$ be the 
corresponding vertex. Then we 
apply Lemma 3.7 to $\Lambda$, $\Lambda^\prime$ so that for the resulting new Z-splittings, 
which are still denoted by $\Lambda$, $\Lambda^\prime$ for simplicity, every 
edge $e\in E\Lambda$, $e^\prime\in E\Lambda^\prime$ is incident to $v,v^\prime$ respectively. 
With this understood, there is
an isomorphism of the underlying graphs of $\Lambda$ and $\Lambda^\prime$, extending 
$\hat{\phi}:V\Lambda\rightarrow V\Lambda^\prime$. Since by assumption all the edge groups of 
$\Lambda$ and $\Lambda^\prime$ are given by the center $z(G)$, it follows easily that the family of
isomorphisms $\rho_v$ can be extended to an isomorphism of the Z-splittings  
$\Lambda$ and $\Lambda^\prime$. This finishes the proof for the special case
where $\pi_1(N_j)=z(G)=\pi_1(N_j^\prime)$ for all $N_j,N_j^\prime$.

Suppose $\pi_1(N_j)=z(G)$ for all $N_j$ does not hold. Then by Lemma 3.6(1), the 
condition that $\pi_1$ of a regular fiber of $\pi: X\rightarrow Y$ is a proper subgroup of $\pi_1(N_j)$ 
for some $N_j$ is equivalent to the more convenient condition that $\pi_1(N_j)\neq z(G)$, 
as the latter is formulated without reference to $\pi: X\rightarrow Y$. On the other hand, 
by Proposition 3.5(4), $\pi_1(N_j^\prime)=z(G)$ for all $N_j^\prime$ 
also does not hold. Accordingly, one can divide the set of edges $E\Lambda$ 
(resp. $E\Lambda^\prime$) into two groups by the following rules:

\begin{itemize}
\item [{(I)}] $e\in E\Lambda$ (resp. $e^\prime\in E\Lambda^\prime$) belongs to (I) if and only if 
$G(e)\neq z(G)$ (resp. $G({e^\prime})\neq z(G)$);
\item [{(II)}] $e\in E\Lambda$ (resp. $e^\prime\in E\Lambda^\prime$) belongs to (II) if and only if 
$G(e)=z(G)$ (resp. $G({e^\prime})=z(G)$).
\end{itemize}

Pick a vertex $v\in V\Lambda$, and without loss of generality, assume that there is an edge $e$
belonging to (I) such that $G(e)$ is conjugate to a subgroup of $G(v)$. We denote the set of such
edges by $E_v$. Then by Lemma 3.7, we can assume that any $e\in E_v$ is incident to $v$. 
Furthermore, we can assume (again with the help of Lemma 3.7) that any $e\in E\Lambda$ belonging to (II) is not incident to $v$ by the fact that $E_v\neq \emptyset$. With this understood, 
we denote by $\Gamma_v$ the minimal subgraph containing $v$ and $E_v$ and by $G_{\Gamma_v}$ the corresponding subgraph of groups supported by $\Gamma_v$. Finally, we 
let $v^\prime=\hat{\phi}(v)\in V\Lambda^\prime$ be the corresponding vertex in the Z-splitting 
$\Lambda^\prime$. We make the same arrangement as above for the vertex $v^\prime$ with the corresponding notations in which $v$ is replaced by $v^\prime$.

Our next goal is to construct an isomorphism between the subgraphs of groups $G_{\Gamma_v}$ and $G_{\Gamma_{v^\prime}}$, extending the given isomorphism $\rho_v: G(v)\rightarrow 
G(v^\prime)$. To this end, we pick a fundamental $G$-transversal for $G_{\Gamma_v}$ as
follows. Let $\tilde{v}$ be a vertex of the Bass-Serre tree $T$ whose $G$-orbit is $v$. For each
$e\in E_v$, we choose an edge $\tilde{e}\in ET$ incident to $\tilde{v}$, whose $G$-orbit is $e$.
We let $\Gamma_{\tilde{v}}$ be the minimal subgraph of $T$ containing $\tilde{v}$ and
$\tilde{e}$, $\forall e\in E_v$. Then it is clear that $\Gamma_{\tilde{v}}$ is a fundamental $G$-transversal for $G_{\Gamma_v}$. With this understood, we shall construct a fundamental $G$-transversal for $G_{\Gamma_{v^\prime}}$ as follows. 

We set $\tilde{v}^\prime=\phi(\tilde{v})$, where $\phi: VT\rightarrow VT^\prime$ is the 
$G$-equivariant bijection coming from Lemma 3.4, which induces 
$\hat{\phi}: V\Lambda\rightarrow V\Lambda^\prime$.
For any edge $\tilde{e}\in \Gamma_{\tilde{v}}$, we denote by
$\tilde{w}$ the vertex other than $\tilde{v}$ to which $\tilde{e}$ is incident to, and set 
$\tilde{w}^\prime=\phi(\tilde{w})$ correspondingly. Then as in the proof of Proposition 3.5,
there exists a unique reduced path in $T^\prime$ connecting $\tilde{v}^\prime$ to $\tilde{w}^\prime$:
$$
v_0=\tilde{v}^\prime, e_1^{\epsilon_1}, v_1, e_2^{\epsilon_2}, \cdots, e_n^{\epsilon_n}, v_n=\tilde{w}^\prime, 
$$
such that $G_{\tilde{e}}\subset G_{e_i}$ for all $i$ and that there exists a $j$ with 
$G_{e_j}=G_{\tilde{e}}$.  Let $\hat{e}_i\in E\Lambda^\prime$ be the $G$-orbit of $e_i$. 
Then since the edge $e\in E_v$ belongs to (I), it follows that $\hat{e}_j\in E\Lambda^\prime$ also belongs to (I) because $G_{e_j}=G_{\tilde{e}}$.
Now with $G_{e_j}=G_{\tilde{e}}\subset G_{e_i}$, it follows from Lemma 3.6(1) that 
$G_{e_j}=G_{e_i}$ for all $i$, which implies that the edge groups $G(\hat{e}_i)$ belong to 
the same conjugacy class in $G$.
It follows that the vertices $v_k$, where $k$ is even, must be in the same $G$-orbit, 
and that $n$ must be odd. 
In particular, $v_{n-1}$ and $v_0=\tilde{v}^\prime$ are in the same $G$-orbit. We fix a choice of 
$g_{\tilde{e}}\in G$ such that $g_{\tilde{e}}v_{n-1}=\tilde{v}^\prime$, and set $\tilde{e}^\prime=e_n$, and
let $w\in V\Lambda$, $w^\prime\in V\Lambda^\prime$ be the $G$-orbit of $\tilde{w}$, $\tilde{w}^\prime$
respectively. Then the $G$-orbit $e^\prime\in E\Lambda^\prime$ of $\tilde{e}^\prime$ is incident to the
vertices $v^\prime$ and $w^\prime$. It follows that $e^\prime, w^\prime$ are part of the subgraph 
$\Gamma_{v^\prime}$, and $v\mapsto v^\prime$, $e\mapsto e^\prime$ and $w\mapsto w^\prime$
define an isomorphism between $\Gamma_v$ and $\Gamma_{v^\prime}$.

Suppose $\rho_v: G(v)\rightarrow G(v^\prime)$ is given by 
$h\mapsto g_{\tilde{v}}h g_{\tilde{v}}^{-1}$ for some $g_{\tilde{v}}\in G$, where $h\in G_{\tilde{v}}$.
Then the subset $\{ g_{\tilde{v}} \tilde{v}^\prime, g_{\tilde{v}} g_{\tilde{e}} \tilde{e}^\prime, g_{\tilde{v}} g_{\tilde{e}} \tilde{w}^\prime|e\in E_v, w\in \Gamma_v\}$ is a fundamental  $G$-transversal for 
$G_{\Gamma_{v^\prime}}$. Moreover, there is an isomorphism 
$\{\rho_v, \rho_e,\rho_w|e\in E_v, w\in \Gamma_v\}$ between the subgraphs of groups
$G_{\Gamma_v}$ and $G_{\Gamma_{v^\prime}}$, extending the given isomorphism $\rho_v: G(v)\rightarrow G(v^\prime)$, where 
$\rho_e: G_{\tilde{e}}\rightarrow G_{g_{\tilde{v}} g_{\tilde{e}} \tilde{e}^\prime}$, 
$\rho_w: G_{\tilde{w}}\rightarrow G_{g_{\tilde{v}} g_{\tilde{e}} \tilde{w}^\prime}$
are given by conjugation of $g_{\tilde{v}} g_{\tilde{e}}\in G$. 

Finally, by repeating the above construction, we obtain a disjoint union of subgraphs of groups 
$G_{\Gamma_k}$ of the Z-splitting $\Lambda$, a disjoint union of subgraphs of groups 
$G_{\Gamma_k^\prime}$ of the Z-splitting $\Lambda^\prime$, and a collection of isomorphisms 
$\rho_k: G_{\Gamma_k}\rightarrow G_{\Gamma_k^\prime}$, such that for any edges $e\in E\Lambda
\setminus \{\Gamma_k\}$, $e^\prime\in E\Lambda^\prime\setminus \{\Gamma_k^\prime\}$, 
$G(e)=z(G)=G(e^\prime)$. It follows easily that the isomorphisms $\rho_k$ can be uniquely extended
to an isomorphism of Z-splittings between $\Lambda$ and $\Lambda^\prime$. 
This finishes the proof of Theorem 1.5(2). 

\section{Irreducible $\s^1$-four-manifolds}

This section is devoted to a proof of Theorem 1.6.  The proof involves a smooth classification of 
fixed-point free, smooth $\s^1$-four-manifolds whose $\pi_1$ has a center of rank greater than $1$ 
(cf. Theorem 4.3), which is given at the end of the section. 

The following lemma shows that a finitely generated group with infinite center is either single-ended or double-ended.

\begin{lemma}
Let $G$ be a finitely generated group with infinite $z(G)$ and suppose $G$ is not single-ended. Then 
$G$ is isomorphic to $A\ast_A \alpha$ where $A$ is a finite group. In particular, $G$ is double-ended.
\end{lemma}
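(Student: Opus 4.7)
The plan is to combine Stallings' classical theorem on ends of finitely generated groups with the paper's Lemma 2.1. Since $z(G)$ is infinite, $G$ itself is infinite; together with the hypothesis that $G$ is not single-ended, this means $G$ has at least two ends. Stallings' theorem then furnishes a nontrivial splitting of $G$ over a finite subgroup, namely either $G = A \ast_C B$ with $A \neq C \neq B$ and $|C| < \infty$, or $G = A \ast_C \alpha$ as an HNN extension with $|C| < \infty$.

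The amalgamated product case is ruled out immediately by Lemma 2.1(1): it would force $z(G) \subset C$, contradicting the infiniteness of $z(G)$. Hence $G$ must split as an HNN extension $A \ast_C \alpha$ over a finite subgroup $C$.

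Next I pick any element $x \in z(G) \setminus C$, which exists because $z(G)$ is infinite while $C$ is finite, and apply Lemma 2.1(2) to $x$. The first alternative of that lemma ($x \in C$) is ruled out by the choice of $x$, so the second alternative must hold: $C = A = \alpha(C)$ and $G$ is isomorphic to $A \ast_A \alpha'$ for some automorphism $\alpha'$ of $A$ of finite order. Since $A = C$ is finite, this gives the desired form. Finally, to see that $G$ is double-ended, note that $A \ast_A \alpha' \cong A \rtimes_{\alpha'} \Z$, and the infinite cyclic subgroup generated by $t^l$ (where $l$ is the order of $\alpha'$) has finite index in $G$; thus $G$ is virtually $\Z$ and therefore has exactly two ends.

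I do not expect any step here to present a serious obstacle: once Stallings' theorem localizes the problem to amalgamated or HNN splittings over a finite subgroup, both cases are handled directly by the two parts of Lemma 2.1, and the final conclusion is a one-line observation about virtually cyclic groups.
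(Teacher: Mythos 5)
Your proof is correct and follows essentially the same path as the paper's: invoke Stallings' End Theorem to get a splitting over a finite subgroup, kill the amalgam case with Lemma~2.1(1), and use Lemma~2.1(2) to force $A=C=\alpha(C)$ in the HNN case. The only difference is that you spell out the final ``double-ended'' conclusion (via $A\ast_A\alpha'\cong A\rtimes_{\alpha'}\Z$ being virtually cyclic), whereas the paper simply asserts it; this is a fine, harmless addition.
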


\begin{proof}
Let $e(G)$ denote the number of ends of $G$. Then $e(G)\geq 1$ because $G$ is infinite. 
On the other hand, by Stallings End Theorem
(cf. e.g. Scott-Wall \cite{SW}), if $e(G)\geq 2$, then $G$ splits over a finite subgroup, i.e.,
either $G=A\ast_C B$ with $A\neq C\neq B$, or $G=A\ast_C\alpha$, where in both cases
$C$ is a finite group. By Lemma 2.1, the assumption that $z(G)$ is infinite implies
that the first case can not occur, and in the second case, $C=A=\alpha(C)$. In particular,
$A$ is a finite group.  

\end{proof}

\begin{lemma}
Let $\pi: X\rightarrow Y$ be the orbit map of an injective $\s^1$-action. Then $\pi_1(X)$ is 
double-ended if and only if $\pi_1^{orb}(Y)$ is finite. 
\end{lemma}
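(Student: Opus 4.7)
The plan is to extract from the principal $\s^1$-bundle $\pi: X \to Y$ over the orbifold $Y$ the short exact sequence
$$
1 \to \langle h \rangle \to \pi_1(X) \to \pi_1^{orb}(Y) \to 1,
$$
where $h$ denotes the homotopy class of a principal orbit. First I would invoke the long exact sequence of (orbifold) homotopy groups associated to an $\s^1$-orbifold-fibration, which yields
$$
\pi_2^{orb}(Y) \to \pi_1(\s^1) \to \pi_1(X) \to \pi_1^{orb}(Y) \to 1.
$$
By definition, injectivity of the $\s^1$-action is the statement that $h$ has infinite order, equivalently that $\pi_1(\s^1) \to \pi_1(X)$ is injective (so the connecting map $\pi_2^{orb}(Y) \to \pi_1(\s^1)$ vanishes), producing the displayed short exact sequence.

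With this in hand, the ``$\Leftarrow$'' direction is immediate: if $\pi_1^{orb}(Y)$ is finite, the sequence exhibits $\pi_1(X)$ as a virtually infinite cyclic group, hence as a group with exactly two ends.

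For the converse, I would appeal to Lemma 4.1: if $\pi_1(X)$ is double-ended, then $\pi_1(X) \cong A \ast_A \alpha$ for some finite group $A$, which in particular contains an infinite cyclic subgroup $N$ of finite index. A short group-theoretic argument then shows that any infinite subgroup of such a virtually $\Z$ group must itself be of finite index: if $H$ is infinite, $H \cap N$ has finite index in $H$, while being infinite it is a finite-index subgroup of $N \cong \Z$, so $H$ has finite index in $\pi_1(X)$. Applying this to $\langle h \rangle$ yields that $\pi_1^{orb}(Y) = \pi_1(X)/\langle h \rangle$ is finite.

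The only real subtlety is in the first step, namely confirming that the long exact sequence of homotopy groups is available in this orbifold setting. This is standard in the orbifold literature already cited in Section 2 (e.g.\ the exact sequence used in the proof of Lemma 2.3), but it deserves explicit mention. Once this is granted, the remainder of the argument is entirely formal.
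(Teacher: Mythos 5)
Your proof is correct and follows essentially the same route as the paper. Both proofs appeal to Lemma 4.1 to write $\pi_1(X)\cong A\ast_A\alpha$ with $A$ finite and then show $\langle h\rangle$ has finite index; the only difference is that you observe, cleanly and in slightly greater generality, that \emph{every} infinite subgroup of a virtually-$\Z$ group has finite index, whereas the paper instead pins down the specific infinite cyclic subgroup $H=\langle t\rangle$ and remarks that $\Gamma\cap H$ has finite index in $H$ because $\alpha$ has finite order. Your version sidesteps the need to unwind that last step and is a touch tidier, but it is not a genuinely different argument.
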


\begin{proof}
It suffices to show that if $\pi_1(X)$ is double-ended, then $\pi_1^{orb}(Y)$ is finite; 
the other direction is trivial, cf. e.g. Scott-Wall \cite{SW}. To see this,  note that 
$\pi_1(X)=A\ast_A\alpha$ for a finite group $A$ by Lemma 4.1, where we recall that 
$A\ast_A\alpha$ is generated by elements of $A$ and a letter $t$ with additional relations 
$tat^{-1}=\alpha(a)$, $a\in A$. If we let $H$ be the cyclic subgroup generated by $t$, 
then $H$ has finite index in $\pi_1(X)$. On the other hand, if we let $\Gamma$ be the subgroup 
generated by the homotopy class of a regular fiber of $\pi$, then $\Gamma\cap H$ has 
finite index in $H$ because $\alpha$ is of finite order. Consequently $\Gamma\cap H$ has finite 
index in $\pi_1(X)$. This implies that the index of $\Gamma$ in $\pi_1(X)$ is also finite, 
which means exactly that $\pi_1^{orb}(Y)$ is finite. Hence the lemma. 
\end{proof}

\noindent{\bf Proof of Theorem 1.6}

\vspace{3mm}

{\bf Part (1)}. The proof for this part is based on the rigidity of 
{\it injective Seifert fibered space construction}, which we shall briefly review first, 
see Lee-Raymond \cite{LR0} for more details. 
Suppose we are given a group $\pi$ together with
a short exact sequence $1\rightarrow \Gamma\rightarrow \pi\rightarrow Q\rightarrow 1$, where 
$\Gamma=\Z^k$. Let $W$ be a simply connected smooth manifold and consider the trivial principal
$\R^k$-bundle $\R^k\times W$ over $W$. Let $\psi$ be a smooth, free and properly discontinuous action of $\pi$ on $\R^k\times W$ via bundle morphisms, such that the restriction $\psi|_\Gamma$ 
is given by translations via an embedding $\epsilon: \Gamma=\Z^k\rightarrow \R^k$ as a uniform lattice.  Such an action $\psi$ induces a smooth action of $Q$ on $W$, which is denoted by $\rho$. The quotient space $E\equiv \R^k\times W/\psi(\pi)$ is a Seifert fibered space over the orbifold 
$W/\rho(Q)$, with regular fiber $T^k=\R^k/\epsilon(\Gamma)$ which is a $k$-dimensional torus. 
Conversely, a Seifert fibered space with a regular fiber $T^k$ must arise from such a construction
if the inclusion of a regular fiber induces an injective map on $\pi_1$ (such Seifert fibered spaces
are called injective). In this case the short exact sequence $1\rightarrow \Gamma\rightarrow \pi\rightarrow Q\rightarrow 1$ is part of the homotopy exact sequence associated to the corresponding fibration, with $\pi$ being the $\pi_1$ of the Seifert fibered space, $\Gamma=\Z^k$ being the $\pi_1$ of a 
regular fiber, and $Q$ being the $\pi_1^{orb}$ of the base orbifold. 

Given two such actions $\psi_1,\psi_2$ of $\pi$, with induced embeddings $\epsilon_1, \epsilon_2:
\Gamma\rightarrow \R^k$ and induced actions $\rho_1,\rho_2$ of $Q$ on $W$, the aforementioned 
rigidity theorem asserts that if $\rho_1,\rho_2$ are conjugate by a diffeomorphism $h:W\rightarrow W$,
then $\psi_1,\psi_2$ are conjugate by $(\lambda,g,h)$, where $\lambda\in C^\infty (W,\R^k)$,
$g\in GL(k,\R)$, and 
$$
(\lambda, g,h)\cdot (v,w)=(g(v)+\lambda(h(w)),h(w)), \;\;(v,w)\in \R^k\times W.
$$
Note that in particular, the corresponding Seifert fibered spaces 
$E_1=\R^k\times W/\psi_1(\pi)$ and $E_2=\R^k\times W/\psi_2(\pi)$ are diffeomorphic via 
a fiber-preserving diffeomorphism induced by $(\lambda,g,h)$. See \cite{LR0}, p. 381.

Now let $E_1,E_2$ be two injective Seifert fibered spaces and let $\alpha: \pi_1(E_1)
\rightarrow \pi_1(E_2)$ be an isomorphism. Furthermore, we assume that the universal 
covers of $E_1,E_2$ are diffeomorphic, say given by $\R^k\times W$, and that the isomorphism
$\alpha: \pi_1(E_1)\rightarrow \pi_1(E_2)$ respects the homotopy exact sequences associated 
to the corresponding fibrations on $E_1$ and $E_2$. Note that the latter is always true when 
there is a certain uniqueness of the short exact sequence 
$1\rightarrow \Gamma\rightarrow \pi\rightarrow Q\rightarrow 1$, e.g.,  when $\Gamma=z(\pi)$.
With this understood, we denote the group $\pi_1(E_2)$ by $\pi$ and identify $\pi_1(E)$ with 
$\pi$ via $\alpha$. Then $E_1$, $E_2$ may be regarded as arising from the injective Seifert 
fibered space construction for some actions $\psi_1,\psi_2$ of $\pi$ on $\R^k\times W$. 
Let $\rho_1,\rho_2$ be the induced actions of $Q$ on $W$. Then the rigidity theorem mentioned above implies that there is a fiber-preserving diffeomorphism $\phi: E_1\rightarrow E_2$ such
that $\phi_\ast=\alpha: \pi_1(E_1)\rightarrow \pi_1(E_2)$, if $\rho_1,\rho_2$ are conjugate by a diffeomorphism $h:W\rightarrow W$. (Roughly speaking, the above rigidity theorem allows us to 
show that if the diffeomorphism classification of the base orbifolds are determined by
the fundamental groups, then so are the fiber-preserving diffeomorphism classification of the 
corresponding Seifert fibered spaces.)

With the preceding understood, we shall now give a proof for part (1).  Consider first the case
where $\text{rank }z(\pi_1(X))>1$. A smooth classification of such fixed-point free, smooth 
$\s^1$-four-manifolds is given in Theorem 4.3, which shows that it suffices to consider the case 
where $\text{rank }z(\pi_1(X))=2$ and $\pi_2(X)=0$. Moreover, it also shows that in this case, 
$X$, $X^\prime$ arise from the above injective Seifert fibered space construction with 
$k=2$ and $W=\R^2$. (Note that the uniqueness of the short exact sequence follows from 
the fact that $\Gamma=z(\pi)$, cf. Lemma 2.2 (a)). With this understood, the existence of 
$\phi:X\rightarrow X^\prime$ with $\phi_\ast=\alpha$ follows from the fact that for orientable 
$2$-orbifolds with infinite fundamental group,  any isomorphism 
of $\pi_1^{orb}$ may be realized by a diffeomorphism of the $2$-orbifolds (e.g. see \cite{Mac}).

It remains to consider the case where $\text{rank }z(\pi_1(X))=1$. In this case $X$ is an injective
Seifert fibered space over a $3$-orbifold $Y$ with regular fiber $\s^1$, where $Y$ is an irreducible
$3$-orbifold with infinite fundamental group. As $Y$ is good, the Geometrization Theorem implies
that $Y=\tilde{Y}/G$ for some aspherical $3$-manifold $\tilde{Y}$ (cf. \cite{MM, BLP}). (Note that
$G$ may be trivial here.) Furthermore, by the Geometrization Theorem, $\tilde{Y}$ admits a
geometric decomposition (see e.g. Kleiner-Lott \cite{KL}). In particular,  $\tilde{Y}$ is either Haken, 
or Seifert fibered, or hyperbolic, and the universal cover of $\tilde{Y}$ is diffeomorphic to $\R^3$. 
With this understood, we see that $X$ arises from the injective Seifert fibered space construction 
with $k=1$ and $W=\R^3$. (Note that the condition $\Gamma=z(\pi)$ is satisfied, cf.  Lemma 2.3, which gives the required uniqueness for the short exact sequence 
$1\rightarrow \Gamma\rightarrow \pi\rightarrow Q\rightarrow 1$.) It remains to show that for 
irreducible $3$-orbifolds with infinite fundamental group, any isomorphism of $\pi_1^{orb}$ may be realized by a diffeomorphism of the $3$-orbifolds. This was verified by McCullough and Miller 
(see the proof of Corollary 5.3 in \cite{MM}) when $\tilde{Y}$ is either Haken or Seifert fibered. 
For the remaining case, the $3$-orbifolds are hyperbolic, and in this case, Mostow Rigidity implies that any isomorphism of $\pi_1^{orb}$ may be realized by an isometry of the $3$-orbifolds.
This finishes off the proof for part (1). 

\vspace{3mm}

{\bf Part (2)}. Let $\pi: X\rightarrow Y$ be the orbit map of the $\s^1$-action on $X$.
By Lemma 4.2, this is the case precisely when $Y$ has finite fundamental group. By the Geometrization Theorem, $Y$ is a spherical $3$-orbifold, i.e., there is a finite subgroup $G$ of $SO(4)$ such that $Y=\s^3/G$. Note that the Euler class of  $\pi: X\rightarrow Y$ is torsion, so that there is a $3$-manifold $\hat{Y}$ and a periodic diffeomorphism 
$f$ such that $Y=\hat{Y}/\langle f\rangle$ and $X$ is the mapping torus of $f$. Moreover, 
by the Geometrization Theorem, $\hat{Y}$ is an elliptic $3$-manifold. Similar conclusions
hold for $X^\prime$, i.e., $X^\prime$ is the mapping torus of a periodic diffeomorphism 
$f^\prime$ of an elliptic $3$-manifold $\hat{Y}^\prime$. 

Note that the mapping torus description of $X$, $X^\prime$ implies that $\pi_1(X)$, 
$\pi_1(X^\prime)$ are given by HNN extensions $\pi_1(\hat{Y})\ast_{\pi_1(\hat{Y})} f_\ast$
and $\pi_1(\hat{Y}^\prime)\ast_{\pi_1(\hat{Y}^\prime)} f^\prime_\ast$ respectively. This gives 
rise to short exact sequences 
$$
1\rightarrow\pi_1(\hat{Y})\stackrel{i}{\rightarrow}\pi_1(X) \stackrel{p}{\rightarrow} \Z\rightarrow 1
\mbox{ and }
1\rightarrow\pi_1(\hat{Y}^\prime)\stackrel{i^\prime}{\rightarrow}\pi_1(X^\prime) \stackrel{p^\prime}{\rightarrow} \Z\rightarrow 1.
$$
With this understood, given any isomorphism $\alpha: \pi_1(X)\rightarrow \pi_1(X^\prime)$,
we observe that the homomorphism $p^\prime\circ \alpha\circ i: \pi_1(\hat{Y})\rightarrow \Z$ 
is trivial because $\pi_1(\hat{Y})$ is finite. This implies that $\alpha\circ i: \pi_1(\hat{Y})\rightarrow 
\pi_1(X^\prime)$ lies in the image of $i^\prime: \pi_1(\hat{Y}^\prime)\rightarrow \pi_1(X^\prime)$.
It follows easily from this consideration that $\alpha: \pi_1(X)\rightarrow \pi_1(X^\prime)$ induces 
an isomorphism $\hat{\alpha}: \pi_1(\hat{Y})\rightarrow \pi_1(\hat{Y}^\prime)$ such that 
$f^\prime_\ast= \hat{\alpha}\circ f_\ast\circ \hat{\alpha}^{-1}$ as an element of 
$\text{Out }(\pi_1(\hat{Y}^\prime))$. Suppose $\hat{\alpha}$ 
can be realized by a diffeomorphism $h: \hat{Y}\rightarrow \hat{Y}^\prime$, e.g. 
when $\hat{Y}, \hat{Y}^\prime$ are not lens spaces. Identifying $\hat{Y}$ with
$\hat{Y}^\prime$ via $h$, $X$ may be regarded as the mapping torus of the periodic 
diffeomorphism $g=h\circ f\circ h^{-1}: \hat{Y}^\prime\rightarrow \hat{Y}^\prime$. 
Now observe that $g_\ast=f^\prime_\ast$ as an element of $\text{Out }(\pi_1(\hat{Y}^\prime))$, 
which implies that $g$ and $f^\prime$ are homotopic, hence isotopic 
(cf. \cite{A, Rub, B, HR, BR, BO}). 
The existence of $\phi:X\rightarrow X^\prime$ with $\phi_\ast=\alpha$ follows easily from these considerations. This finishes the proof of part (2). 

We end this section with the smooth classification theorem alluded to earlier. The proof of the theorem employed a key lemma, Lemma 5.2, whose proof will be given in the next section. 

\begin{theorem}
Suppose that $X$ is a fixed-point free, smooth $\s^1$-four-manifold with $\text{rank }z(\pi_1(X))>1$. 
Then $X$ belongs to one of the following cases:
\begin{itemize}
\item [{(1)}] If $\text{rank }z(\pi_1(X))>2$, then $X$ is diffeomorphic to the $4$-torus $T^4$.
\item [{(2)}] If $\text{rank }z(\pi_1(X))=2$ and $\pi_2(X)\neq 0$, then $X$ is diffeomorphic to 
$T^2\times \s^2$.
\item [{(3)}] If $\text{rank }z(\pi_1(X))=2$ and $\pi_2(X)=0$, then $X$ is diffeomorphic to 
$\s^1\times N^3/G$, where $N^3$ is an irreducible Seifert $3$-manifold with infinite fundamental group, and $G$ is a finite cyclic group acting on $\s^1\times N^3$ preserving the product structure and orientation on each factor, and the Seifert fibration on $N^3$.
\end{itemize}
\end{theorem}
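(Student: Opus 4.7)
The plan is to reduce $X$ to a Seifert-type $T^2$-structure and then classify case by case. First, the $\s^1$-action on $X$ must be injective: otherwise Theorem 1.6 would force $X$ to be the mapping torus of a periodic diffeomorphism of an elliptic $3$-manifold, in which case $\pi_1(X)$ has a finite-index cyclic subgroup and so $\text{rank}\, z(\pi_1(X)) \le 1$, contradicting the hypothesis. Hence the orbit map $\pi: X \to Y$ is a principal $\s^1$-bundle over a closed orientable $3$-orbifold, and the exact sequence $1 \to \Z \to \pi_1(X) \to \pi_1^{orb}(Y) \to 1$ shows that $z(\pi_1^{orb}(Y))$ is infinite. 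Lemma 5.2 then yields that $Y$ is Seifert fibered, and that $Y$ is the mapping torus of a periodic diffeomorphism of a spherical $2$-orbifold whenever $\pi_2(X) = \pi_2^{orb}(Y) \neq 0$.

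Let $pr: Y \to B$ denote the Seifert fibration provided by Lemma 5.2. The rank hypothesis supplies a central element of $\pi_1(X)$ whose image in $\pi_1^{orb}(Y)$ has infinite order beyond the principal-orbit class, and after possibly modifying the Seifert fibration on $Y$, I would arrange that the homotopy class of a regular fiber of $pr$ lies in the image of $z(\pi_1(X))$ under $\pi_\ast$. Lemma 2.6 then extends $\pi: X \to Y$ to a principal $T^2$-bundle $\Pi: X \to B$ over an orientable $2$-orbifold $B$, after which the remaining analysis divides into three cases.

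For case (1), $\text{rank}\, z(\pi_1(X)) > 2$: the rank count transferred through $\Pi$ forces $z(\pi_1^{orb}(B))$ to be infinite, and Lemma 2.2(a) leaves only a nonsingular $2$-torus (the football, spindle, and $(2,2,2)$-turnover all have finite $\pi_1^{orb}$, which would cap the center rank at $2$); hence $X$ is a principal $T^2$-bundle over $T^2$, and since $\pi_1(X)$ is virtually $\Z^4$ with center of rank $\ge 3$, the injective Seifert fibered space construction recalled in the proof of Theorem 1.5 identifies $X \cong T^4$. For case (2), $\text{rank} = 2$ with $\pi_2(X) \neq 0$: Lemma 5.2 forces $B$ to be spherical; the condition $\pi_2(X) \neq 0$ combined with analyzing the pullback of $\Pi$ to the universal cover $\s^2$ of $B$ forces the Euler class of the resulting principal $T^2$-bundle over $\s^2$ to vanish, and a check on admissible deck-group actions (which embed into translations of $T^2$ and into $SO(3)$) identifies $X \cong T^2 \times \s^2$. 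For case (3), $\text{rank} = 2$ with $\pi_2(X) = 0$: the base $B$ is aspherical, so $X$ is a principal $T^2$-bundle over an aspherical orientable $2$-orbifold; passing to a suitable finite cover where the orbifold structure is trivialized and applying the Lee--Raymond injective Seifert fibered space construction with $k = 2$ and $W = \R^2$ realizes $X$ as $(\s^1 \times N^3)/G$ with $N^3$ an irreducible Seifert $3$-manifold of infinite $\pi_1$ and $G$ a finite cyclic deck group preserving the required structures.

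The main obstacle I anticipate is the $T^2$-extension step: arranging the actual regular Seifert fiber of $pr$ (rather than merely a power of it) to represent a class in $\pi_\ast(z(\pi_1(X)))$ so that Lemma 2.6 applies will rely on a careful study of the map $z(\pi_1(X)) \to z(\pi_1^{orb}(Y))$ together with the flexibility in choosing a Seifert fibration on $Y$; once this is in place, the remaining classification in each case is essentially a bookkeeping exercise with principal torus bundles and the Lee--Raymond rigidity used in Theorem 1.5.
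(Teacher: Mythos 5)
Your overall strategy matches the paper's: invoke Lemma 5.2 to get a Seifert fibration $pr: Y\to B$ on the orbit space and Lemma 2.6 to promote $\pi: X\to Y$ to a principal $T^2$-bundle $\Pi: X\to B$, then split into the three cases. You are right to flag the subtlety in applying Lemma 2.6, which the paper passes over silently: one must see that the actual fiber class $c$ of $pr$, not merely a power, lies in $\pi_\ast(z(\pi_1(X)))$. In fact, if $c^k\in\pi_\ast(z(\pi_1(X)))$ for some $k\neq 0$ then so does $c$ (the obstruction cocycle $\pi_1^{orb}(B)\to\Z$ is killed by any power of $\tilde c$ being central), and when $z(\pi_1^{orb}(Y))$ has rank $\geq 2$ one can show $Y\cong T^3$, where the Seifert fibration may be chosen freely; so your proposed ``flexibility'' fix does close this point, though it is more than a one-liner.

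The genuine gap is in case (3). The paper's crucial device — absent from your argument — is the decomposition of the $T^2$-bundle $\Pi$ into two $\s^1$-bundles $V_1, V_2$ over $B$ coming from a choice of basis of $\pi_1(T^2)$, together with the $SL(2,\Z)$ change-of-basis computation $e(V_1')=a\,e(V_1)+b\,e(V_2)$, $e(V_2')=c\,e(V_1)+d\,e(V_2)$ that always permits arranging $e(V_1)=0$. This single trick is what makes all three cases fall out: in (1) it forces $V_1$ trivial so $X\cong\s^1\times V_2$; in (2) it shows $\Sigma$ is $\s^2$ or a football when Lemma 5.2 is applied to $V_2$; in (3), after pulling back to the orientable surface cover $\tilde B$, it gives $\tilde V_1 = \s^1\times\tilde B$, which is exactly where the $\s^1$-factor in $\s^1\times N^3$ comes from, with $N^3=\tilde V_2/\Gamma_1$ and $G=\Gamma/\Gamma_1$. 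Your appeal to the Lee--Raymond construction with $k=2$, $W=\R^2$ realizes $X$ as $(\R^2\times\R^2)/\psi(\pi)$, but the claim that this ``realizes $X$ as $(\s^1\times N^3)/G$'' is precisely the statement that one of the two coordinates of the Euler class vanishes — which is what needs to be proved, not assumed. (Minor: in case (1) your premise that $\pi_1(X)$ is ``virtually $\Z^4$'' is not available a priori for a $T^2$-bundle over $T^2$ with nonzero Euler class; the correct route is that rank $z\geq 3$ forces the Euler class to vanish, whence $\pi_1(X)=\Z^4$, recovering what you assert.)
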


\begin{proof}
Let $\pi:X\rightarrow Y$ be the orbit map of the $\s^1$-action. Note that 
$\pi_\ast:\pi_1(X)\rightarrow \pi_1^{orb}(Y)$ is surjective, so that $\pi_\ast (z(\pi_1(X))$
is contained in $z(\pi^{orb}(Y))$. It follows easily from $\text{rank }z(\pi_1(X))>1$
that $z(\pi^{orb}(Y))$ is infinite. By Lemma 5.2, $Y$ is Seifert fibered, and furthermore, 
by Lemma 2.6, $\pi: X\rightarrow Y$ extends to a principal $T^2$-bundle over a $2$-orbifold 
$B$, which will be denoted by $\Pi:X\rightarrow B$. We remark that $B$ is an orientable, 
closed $2$-orbifold.

We begin by describing a decomposition of the principal $T^2$-bundle into a pair of principal 
$\s^1$-bundles over $B$.  More concretely, given any basis $(e_1,e_2)$ of $\pi_1(T^2)$, we 
let $\theta_i:T^2\rightarrow \s^1$, $i=1,2$, be the projections to the first and the second
factor of the decomposition $T^2=\s^1\times \s^1$ that is determined by the basis $(e_1,e_2)$.
This gives rise to a pair of principal $\s^1$-bundles over $B$, denoted by $V_1$, $V_2$, which
are induced by $\theta_1$ and $\theta_2$ respectively. Note that one can recover the principal
$T^2$-bundle $\Pi: X\rightarrow B$ as the pull-back bundle of $V_1\times V_2\rightarrow B\times B$ 
via the diagonal map $B\rightarrow B\times B$. Moreover, with a change of basis,
one can always arrange $V_1$ to have vanishing Euler number. Indeed, under the change 
of basis
$$
e_1 =ae_1^\prime+ce_2^\prime,\;\;\; e_2=be_1^\prime+de_2^\prime, 
$$
where $ad-bc=1$, the corresponding principal $\s^1$-bundles $V_1^\prime, V_2^\prime$ 
associated to the basis $(e_1^\prime,e_2^\prime)$ have Euler numbers
$$
e(V_1^\prime)=a\cdot e(V_1)+b\cdot e(V_2), \;\; e(V_2^\prime)=c\cdot e(V_1)+d\cdot e(V_2). 
$$
If both of $e(V_1)$ and $e(V_2)$ are nonzero, one can choose a unique pair of integers
(up to a sign), $(a,b)$, such that $e(V_1^\prime)=0$. Note that up to a sign, $e(V_2^\prime)$ 
is independent of the choices of $c$ and $d$. This said, we shall assume in what follows that
$e(V_1)=0$.

With these preparations, we now consider case (1) where $\text{rank }z(\pi_1(X))>2$. 
It is clear that $z(\pi_1^{orb}(B))$ is nontrivial and infinite. By Lemma 2.2(a), $B$ must be a 
nonsingular torus. As $e(V_1)=0$ and $B$ is nonsingular, $V_1$ is trivial, which implies that 
$X=\s^1\times V_2$. Finally, the assumption that $\text{rank }z(\pi_1(X))>2$ implies that $V_2$ 
must also be trivial. Hence $X$ is diffeomorphic to the $4$-torus $T^4$.

Consider case (2) where $\text{rank }z(\pi_1(X))=2$ and $\pi_2(X)\neq 0$.  Note that 
$X$ is a principal $\s^1$-bundle over $V_2$ which is the pull-back of the principal $\s^1$-bundle
$V_1\rightarrow B$ via the map $V_2\rightarrow B$. The homotopy exact sequence associated
to the fibration $X\rightarrow V_2$ (cf. Haefliger \cite{Hae2}) implies that $z(\pi_1^{orb}(V_2))$ 
is infinite and 
$\pi_2^{orb}(V_2)\neq 0$. By Lemma 5.2, $V_2$ is the mapping torus of a periodic diffeomorphism 
of a $2$-orbifold $\Sigma$ where $\pi_1^{orb}(\Sigma)$ is finite. Now observe that $e(V_1)=0$ 
implies that $\Sigma$ must be either $\s^2$ or a football. It follows 
easily that $X$ is diffeomorphic to $T^2\times \s^2$, which finishes the proof for case (2).

For case (3) where $\text{rank }z(\pi_1(X))=2$ and $\pi_2(X)=0$, we first observe that 
$\pi_1^{orb}(B)$
is infinite and therefore $B$ is good.  Let $B=\tilde{B}/\Gamma$, where $\tilde{B}$ is a closed 
orientable surface and $\Gamma$ is a finite group acting on $\tilde{B}$. We let
$\tilde{X}$, $\tilde{V}_1$, $\tilde{V}_2$ be the pull-backs of $X\rightarrow B$, $V_1\rightarrow B$, $V_2\rightarrow B$ to $\tilde{B}$ via $\tilde{B}\rightarrow B=\tilde{B}/\Gamma$. Then $\Gamma$ 
acts freely on $\tilde{X}$, giving $X=\tilde{X}/\Gamma$, and $\tilde{V}_1=\s^1\times \tilde{B}$. 
Let $\Gamma_1$ be the subgroup of $\Gamma$ which acts trivially on the $\s^1$-factor in 
$\tilde{V}_1=\s^1\times \tilde{B}$. Then $\Gamma_1$ acts freely on $\tilde{V}_2$. Denote by $N^3$ the quotient $\tilde{V}_2/\Gamma_1$, which is clearly an irreducible Seifert $3$-manifold with 
infinite fundamental group. With this understood, note that $\tilde{X}/\Gamma_1=\s^1\times N^3$, 
so that if we set $G=\Gamma/\Gamma_1$, then  $X=\s^1\times N^3/G$ where the action of $G$ 
preserves the product structure and the orientation of each factor, as well as the Seifert fibration 
on $N^3$, as claimed. This finishes the proof of Theorem 4.3.

\end{proof}

\begin{remark} 
Theorem 2.1 in \cite{C2} asserts that if a $4$-manifold with $b_2^{+}\geq 1$ has nontrivial
Seiberg-Witten invariant, then the homotopy class of the principal orbits of any smooth,
fixed-point free $\s^1$-action on the manifold must be of infinite order; in particular, the center
of the fundamental group must be infinite. As a corollary of Theorem 4.3(2), the converse of the above statement is not true. More concretely, consider a ruled surface $X$
which is a nontrivial $\s^2$-bundle over $T^2$. Note that $X$ satisfies $b_2^{+}\geq 1$, has
nontrivial Seiberg-Witten invariant, and $z(\pi_1(X))$ is infinite. However, by Theorem 4.3(2) $X$ does not admit any smooth, fixed-point free $\s^1$-action. It is also interesting to note that
a double cover of $X$, which is diffeomorphic to $\s^2\times T^2$, admits a smooth,
fixed-point free $\s^1$-action. We remark that for a closed aspherical manifold, such a correlation between the existence of circle actions and the nontriviality of the center of the fundamental group is part of a conjectured rigidity of aspherical manifolds going back to work of Borel. See \cite{CWY} for some recent progress and more detailed discussions.

\end{remark}

\section{Injectivity of $\s^1$-actions when $\pi_1$ has infinite center}

The main purpose of this section is to show that a smooth fixed-point free $\s^1$-four-manifold whose 
fundamental group has infinite center is injective, hence admits a fiber-sum decomposition. 
A key role is played by Lemma 5.2, whose proof requires the use of the Geometrization Theorem 
in various forms. 

We begin with the following observation. 

\begin{lemma}
Let $Y$ be a $3$-orbifold with a singular set consisting of a union of circles. Then
there is a good $3$-orbifold $Y_0$ such that $Y$, $Y_0$ have the same 
underlying space, and $\pi_1^{orb}(Y_0)=\pi_1^{orb}(Y)$.
\end{lemma}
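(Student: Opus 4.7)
The plan is to construct $Y_0$ by keeping $|Y_0|=|Y|$ and the same collection of singular circles, but replacing each multiplicity $m_i$ of $\gamma_i$ by the order $n_i$ in $\pi_1^{orb}(Y)$ of its meridian $\mu_i\in\pi_1(|Y|\setminus\Gamma)$, where $\Gamma=\sqcup_i\gamma_i$ is the original singular set; if $n_i=1$, I would drop $\gamma_i$ from the singular set entirely. Since $\mu_i^{m_i}=1$ in $\pi_1^{orb}(Y)$ by the orbifold relation, one has $n_i\mid m_i$, so this specifies a well-defined orbifold structure with singular set a union of circles.

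The first step is to verify $\pi_1^{orb}(Y_0)=\pi_1^{orb}(Y)$. Writing $G=\pi_1(|Y|\setminus\Gamma)$, a Van Kampen computation gives $\pi_1^{orb}(Y)=G/\langle\langle\mu_i^{m_i}\rangle\rangle$ and $\pi_1^{orb}(Y_0)=G/\langle\langle\mu_i:n_i=1;\ \mu_i^{n_i}:n_i>1\rangle\rangle$, where the extra relations $\mu_i=1$ arise from filling in the $\gamma_i$ whose multiplicity has collapsed to $1$. Checking that the two normal closures coincide is then a direct algebraic matter: one inclusion uses $n_i\mid m_i$, and the other is exactly the definition of $n_i$ as the order of $\mu_i$ in $\pi_1^{orb}(Y)$.

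The main step is to show $Y_0$ is pseudo-good. Since the orientable bad $2$-orbifolds are precisely teardrops and spindles with distinct multiplicities, I would argue by contradiction, assuming some embedded sphere $\Sigma\subset|Y|$ realizes one of these with respect to the new singular set $\Gamma_0=\{\gamma_i:n_i>1\}$. After a transverse perturbation $\Sigma$ meets $\Gamma$ in finitely many points, some lying on $\Gamma_0$ and the rest on $\Gamma\setminus\Gamma_0$; removing small disks around each such point produces a planar surface $P\subset|Y|\setminus\Gamma$, and the standard $\pi_1$-relation for a planar surface gives an equation in $G$ expressing a product of conjugates of the relevant $\mu_i$ as trivial. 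Projecting this relation to $\pi_1^{orb}(Y)$, every factor coming from a point on $\Gamma\setminus\Gamma_0$ vanishes (since $n_i=1$ means $\mu_i=1$ in $\pi_1^{orb}(Y)$), so only the factors at the intersections with $\Gamma_0$ survive. In the teardrop case, the surviving relation forces $\mu_i=1$ in $\pi_1^{orb}(Y)$ for some $i$ with $n_i>1$, contradicting the definition of $n_i$. In the spindle case, it forces $\mu_i$ to be conjugate to $\mu_j^{\pm 1}$ for distinct $i,j\in\Gamma_0$, so $n_i=n_j$, contradicting the distinct-multiplicity hypothesis.

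The main obstacle I anticipate is the careful bookkeeping of base points and conjugating paths needed to extract the planar-surface relation from $\Sigma$ and to verify that killing the meridians on $\Gamma\setminus\Gamma_0$ leaves precisely the clean relations described above. Conceptually, the proof rests on the observation that the only way a sphere can create a bad $2$-suborbifold in $Y_0$ is to furnish an algebraic relation already forcing certain meridian orders to be strictly smaller than the declared multiplicities, which the definition of $n_i$ was designed to rule out.
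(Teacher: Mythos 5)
Your proof is correct, but it takes a genuinely different route from the paper's. The paper proceeds iteratively: whenever a bad $2$-suborbifold is found, it either deletes a singular circle (teardrop case, using that the teardrop's complementary disk shows $\mu_{\gamma_i}=1$ already in $\pi_1(|Y|\setminus\Sigma Y)$) or, in the spindle case, replaces both multiplicities $m_i,m_j$ by $\gcd(m_i,m_j)$, checking that the normal closure of $\{\mu_{\gamma_i}^{m_i},\mu_{\gamma_j}^{m_j}\}$ is unchanged; a termination argument (strictly fewer circles or strictly smaller multiplicities at each step) then finishes the proof. Your construction is a single-shot one: declare the new multiplicity of $\gamma_i$ to be the order $n_i$ of $\mu_i$ in $\pi_1^{orb}(Y)$, verify directly that this does not change the normal closure, and rule out bad $2$-suborbifolds in $Y_0$ by pushing the planar-surface relation for a transverse sphere down to $\pi_1^{orb}(Y)$. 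The algebraic verification is exactly as you indicate (one inclusion from $n_i\mid m_i$, the other from $\mu_i^{n_i}\in N$ by definition of $n_i$), and in the pseudo-goodness argument the surviving teardrop factor gives $n_{i(p)}=1$, while the surviving spindle factors make the two meridians conjugate, forcing equal orders — both contradictions. Your approach is conceptually cleaner (no termination argument, the answer is defined once and for all), at the cost of invoking the standard planar-surface $\pi_1$-relation; the paper's approach is more elementary at each step but needs the reduction loop. Both are sound.
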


\begin{proof}
Denote by $|Y|$ the underlying $3$-manifold of $Y$ and by $\Sigma Y$ the singular set of $Y$,
consisting of components $\gamma_1,\cdots, \gamma_n$. Then $\pi_1^{orb}(Y)$ admits the 
following presentation
$$
\pi_1^{orb}(Y)=\pi_1(|Y|\setminus \Sigma Y)/N.
$$
Here $N$ is the normal subgroup generated by the elements $\mu_{\gamma_i}^{m_i}$,
$i=1,2,\cdots,n$, where $\mu_{\gamma_i}$ is the meridian around $\gamma_i$ and $m_i$ is the multiplicity of $\gamma_i$ (cf. \cite{BMP}, Proposition 2.7). 

With this understood, for any bad $2$-suborbifold $C$ in $Y$, one has the following two
possibilities: 

\begin{itemize}
\item [{(i)}] there is exactly one $\gamma_i$ such that $C\cap \gamma_i\neq \emptyset$;
\item [{(ii)}] there are $\gamma_i,\gamma_j$, $i\neq j$, $m_i\neq m_j$, such that 
$C\cap \gamma_i\neq \emptyset$, $C\cap \gamma_j\neq \emptyset$.
\end{itemize}

In case (i), the existence of such a $C$ implies that $\mu_{\gamma_i}=1$ in $\pi_1(|Y|\setminus
\Sigma Y)$, hence $\pi_1^{orb}(Y)$ is unchanged after removing $\gamma_i$ from $\Sigma Y$.
In the resulting $3$-orbifold, $C$ is no longer a bad $2$-suborbifold. 

In case (ii), let $m=\text{gcd }(m_i,m_j)$. We change $Y$ to a new $3$-orbifold by replacing 
the multiplicities of $\gamma_i$, $\gamma_j$ with $m$. (In case of $m=1$, this simply means 
that $\gamma_i,\gamma_j$ are both removed from $\Sigma Y$.) Note that the existence of $C$
implies that the normal subgroup generated by $\mu_{\gamma_i}^{m_i}$ and 
$\mu_{\gamma_j}^{m_j}$ is the same as that generated by $\mu_{\gamma_i}^{m}$ and 
$\mu_{\gamma_j}^{m}$. It follows that $\pi_1^{orb}(Y)$ remains unchanged in this process.
Since there are only finitely many singular circles and during the process either the 
number of singular circles is decreased or the multiplicity of a singular circle is
decreased, this process must terminate in finitely many steps. At the end, we obtain a good $3$-orbifold $Y_0$ such that $|Y_0|=|Y|$ and $\pi_1^{orb}(Y_0)=\pi_1^{orb}(Y)$. Hence the lemma.

\end{proof}

A more conceptual view which was suggested by the referee goes as follows: introducing a notion of
complexity for $3$-orbifolds, say by the sum of the multiplicities of the singular circles, then the orbifold
$Y_0$ in Lemma 5.1 is characterized as the one with the minimal complexity among the $3$-orbifolds 
which have the same underlying space and the same fundamental group of the orbifold $Y$. 

In the following lemma, for the definition of $\pi_2^{orb}(Y)$ we refer to \cite{Hae1, Hae2, C0}.

\begin{lemma}
Let $Y$ be an orientable $3$-orbifold, not necessarily good, with a singular set
consisting of a union of circles. If $z(\pi_1^{orb}(Y))$ is infinite, then $Y$ is Seifert fibered.
Moreover, if $\pi_2^{orb}(Y)\neq 0$, then $Y$ is the mapping torus of a periodic diffeomorphism 
of a $2$-orbifold with finite fundamental group.
\end{lemma}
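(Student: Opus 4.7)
The plan is first to reduce to the pseudo-good case via Lemma 5.1, which furnishes a pseudo-good $3$-orbifold $Y_0$ with $|Y_0|=|Y|$ and $\pi_1^{orb}(Y_0)=\pi_1^{orb}(Y)$. Since the passage from $Y$ to $Y_0$ only drops multiplicities along, or removes outright, certain singular circles, any Seifert fibration of $|Y_0|$ whose fibers contain the affected circles descends simultaneously to both $Y$ and $Y_0$; a mapping-torus decomposition transfers similarly. So it suffices to prove the lemma with $Y$ pseudo-good.

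I will then apply the reduced spherical decomposition of Lemma 2.4: disjoint spherical $2$-suborbifolds $\{\Sigma_j\}$ split $Y$ into irreducible pieces $\{Y_i\}$, yielding a graph-of-groups presentation of $\pi_1^{orb}(Y)$ with vertex groups $\pi_1^{orb}(Y_i)$ and edge groups $\pi_1^{orb}(\Sigma_j)$, the latter being finite since each $\Sigma_j$ is spherical. By Lemma 2.1, an infinite-order central element of $\pi_1^{orb}(Y)$ cannot lie in any finite edge group; moreover, in an amalgamated product $A\ast_C B$ with $A\neq C\neq B$ such an element must lie in $C$, which is impossible, and in an HNN extension $A\ast_C\alpha$ with infinite-order central element outside $C$, Lemma 2.1(2) forces $C=A=\alpha(C)$ finite. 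Hence the only possibilities consistent with $z(\pi_1^{orb}(Y))$ being infinite are (i)~the decomposition is trivial and $Y$ is irreducible, or (ii)~the graph of groups degenerates to a single HNN loop whose vertex and edge groups both equal the finite group $\pi_1^{orb}(\Sigma_j)$; case (ii) is geometrically the mapping torus over $\s^1$ of a periodic diffeomorphism of the spherical $2$-orbifold $\Sigma_j$, whose $\pi_1^{orb}$ is finite by construction.

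For case (i) I will invoke the orbifold Seifert Fiber Space Conjecture: an irreducible orientable $3$-orbifold with infinite $\pi_1^{orb}$ containing a normal infinite cyclic subgroup is Seifert fibered. The torsion-free part of $z(\pi_1^{orb}(Y))$ (nontrivial by Lemma 2.3 once it has been established a posteriori, or directly by the fact that infinite center plus finite edge groups forces the center to be essentially torsion-free) supplies such a subgroup. To establish the conjecture in the orbifold setting I plan to pass to a finite manifold cover $\tilde Y$ of $Y$, whose existence follows from the virtual-good and virtual-torsion-free consequences of the Geometrization Theorem (cf.~\cite{BLP, MM}); the preimage of the infinite central cyclic subgroup is again normal and infinite cyclic in $\pi_1(\tilde Y)$, so Gabai's (and Casson--Jungreis') resolution of the Seifert Fiber Space Conjecture identifies $\tilde Y$ as Seifert fibered. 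Meeks--Scott's equivariant analysis of finite group actions on Seifert $3$-manifolds then ensures the Seifert fibration on $\tilde Y$ can be made deck-group-equivariant, whence it descends to a Seifert fibration of $Y$.

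Finally, the moreover clause follows because Theorem 3.23 of \cite{BMP} gives $\pi_2^{orb}(Y)=0$ for irreducible $Y$, so $\pi_2^{orb}(Y)\neq 0$ forces case (ii) above, which is exactly the mapping-torus description claimed. The main obstacle is case (i): threading through the equivariant geometrization machinery so that the Seifert fibration on $\tilde Y$ genuinely descends to an orbifold Seifert fibration of $Y$ (rather than degenerating at the singular locus) requires careful control of the deck-group action near the singular circles, and it is here that the full strength of Meeks--Scott, Gabai, and the Orbifold Theorem of Boileau--Leeb--Porti is needed.
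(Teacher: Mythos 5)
The reduction to the pseudo-good case at the very start is not justified, and this is precisely where the paper's proof does serious work. You assert that ``any Seifert fibration of $|Y_0|$ whose fibers contain the affected circles descends simultaneously to both $Y$ and $Y_0$,'' but nothing in the remainder of the argument establishes that the Seifert fibration (or mapping-torus structure) you ultimately produce on $Y_0$ can be chosen so that the affected singular circles are fibers (resp.\ so that the structure respects them); in general, a Seifert fibration of $Y_0$ will simply fail to be an orbifold fibration of $Y$ along those circles once the higher multiplicities are restored. In the paper's proof the issue is addressed head-on: $Y\neq Y_0$ is shown to occur only in the branch $\pi_2(Y')\neq 0$, where $Y_0$ is a homologically trivial finite quotient of $\s^1\times\s^2$, hence a mapping torus of a periodic diffeomorphism of a spherical $2$-orbifold $\Sigma$; the paper then carries out an explicit analysis of $Y$ itself (cutting along meridional discs, invoking Geometrization of the resulting simply connected or discal pieces) to show $Y\cong\s^1\times(\text{a bad }2\text{-orbifold})$, from which both the Seifert and mapping-torus structures on $Y$ are manifest. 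Your proposal has no substitute for this step.

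Two further places where the proposal outruns what it has established. First, routing the pseudo-good case through Lemma~2.4 (reduced spherical decomposition and the associated graph of groups) is a genuinely different organization from the paper's, which passes directly to a very good manifold cover $Y'$ and never invokes spherical decomposition here; your dichotomy---either the decomposition is trivial and $Y_0$ is irreducible, or it is a single HNN loop with vertex group $=$ edge group finite---is correct but is asserted rather than proven. One must rule out graphs with two or more edges and loops over proper edge subgroups, which requires a short collapsing argument, not just a single application of Lemma~2.1 to an elementary splitting. Second, the inference from the algebraic conclusion ``$\pi_1^{orb}(Y_0)\cong A\ast_A\alpha$ with $A$ finite'' to the geometric conclusion ``$Y_0$ is a mapping torus of a periodic diffeomorphism of $\Sigma_j$'' is not automatic: one needs to show that the piece of $Y_0$ obtained by cutting along $\Sigma_j$ is an orbifold product $\Sigma_j\times I$. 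The paper avoids both gaps by appealing directly to Lemma~2.5's classification of finite orientation-preserving actions on $\s^1\times\s^2$, which delivers the mapping-torus statement geometrically.
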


\begin{proof}
Let $Y_0$ be the good $3$-orbifold associated to $Y$ from Lemma 5.1, which is clearly
orientable. Then there is an orientable $3$-manifold $Y^\prime$ equipped with a finite group action 
of $G$, such that $Y_0=Y^\prime/G$ (cf. \cite{BLP, MM}). Since $\pi_1^{orb}(Y_0)=\pi_1^{orb}(Y)$, 
$z(\pi_1^{orb}(Y_0))$ is also infinite, and consequently, $z(\pi_1(Y^\prime))$,
which contains $\pi_1(Y^\prime)\cap z(\pi_1^{orb}(Y_0))$, is infinite. 
As an abelian subgroup of a $3$-manifold group, $z(\pi_1(Y^\prime))$ must contain 
an infinite cyclic subgroup $H$ (cf. \cite{Hem}, Theorem 9.14), which is clearly normal 
in $\pi_1(Y^\prime)$. 

Consider first the case where $\pi_2(Y^\prime)=0$. By work of Gabai (cf. \cite{Gabai}, and independently, Casson-Jungreis \cite{CJ}), $Y^\prime$ is Seifert fibered, with $H$ 
being generated by a regular fiber of the Seifert fibration. Since $H\subset z(\pi_1^{orb}(Y_0))$, 
it must be invariant under the action of $G$. By a theorem of Meeks and Scott (cf. \cite{MS}, 
Theorem 2.2), $G$ preserves the Seifert fibration on $Y^\prime$, which implies
that $Y_0$ is Seifert fibered. Since we assume $\pi_2(Y^\prime)=0$, $Y_0$ does not contain
any essential spherical $2$-suborbifold. From the proof of Lemma 5.1, we see that $Y$ contains
no bad $2$-suborbifold, and in this case, $Y=Y_0$. This proves that $Y$ is Seifert fibered.
Note that in this case, 
$$
\pi_2^{orb}(Y)=\pi_2^{orb}(Y_0)=\pi_2(Y^\prime)=0.
$$

Suppose $\pi_2(Y^\prime)\neq 0$. Since $z(\pi_1(Y^\prime))$ is nontrivial, $Y^\prime$ must be
prime (here we use Lemma 2.1 and the resolution of the Poincar\'{e} conjecture \cite{P}), and consequently, $Y^\prime=\s^1\times \s^2$. Note that $G$ must act on $Y^\prime=\s^1\times\s^2$ homologically trivially because the fundamental group of $Y_0=Y^\prime/G$ is infinite. 
By Lemma 2.5, $Y_0=Y^\prime/G$ is the mapping torus of a periodic diffeomorphism of some
spherical $2$-orbifold; in particular, $Y_0$ is Seifert fibered. If $Y$ is good, then 
$Y=Y_0$, and the lemma follows in this case. Note that in this case,
$$
\pi_2^{orb}(Y)=\pi_2^{orb}(Y_0)=\pi_2(Y^\prime)\neq 0.
$$

It remains to consider the case where $Y$ is not good. Recall that in the proof of 
Lemma 5.1, $Y_0$ is obtained from $Y$ by performing a sequence of operations in each 
of which either a singular circle is removed or its multiplicity is decreased. 
Since $Y_0$  is the mapping torus of a periodic diffeomorphism $f$ of some spherical 
$2$-orbifold $\Sigma$, it follows easily that $\Sigma$ is either $\s^2$ or a football. 
Moreover, if $\Sigma$ is a football, $f$ must be isotopic to the identity map, and therefore
$Y_0$ is diffeomorphic to $\s^1\times \Sigma$.  It follows readily that $Y$ is the product 
of $\s^1$ with a bad $2$-orbifold $B$. Note that in this case,
$$
\pi_2^{orb}(Y)=\pi_2^{orb}(B)\neq 0
$$
since a bad $2$-orbifold has nontrivial $\pi_2^{orb}$. 

Suppose $\Sigma=\s^2$, and therefore $Y_0=\s^1\times\s^2$. Note that $Y$ can have 
at most two singular circles. Assume first that $Y$ has only one singular circle, 
which is denoted by $\gamma$.
It suffices to show that $(|Y|,\gamma)$ and $(\s^1\times \s^2, \s^1\times \{pt\})$ are
diffeomorphic. To see this, let $W=Y\setminus Nd(\gamma)$ and let $\mu$
denote a meridian of $\gamma$. Then $\pi_1^{orb}(Y)=\pi_1(W)/\langle \mu^m\rangle$
where $m$ denotes the multiplicity of $\gamma$. Since $\mu$ bounds a disc in $W$, and
$\pi_1^{orb}(Y)=\pi_1(Y_0)=\Z$, it follows that $\pi_1(W)=\Z$. Cutting $W$ open along
the disc bounded by $\mu$, we obtain a $3$-manifold $W_0$ with $\partial W_0=\s^2$
and $\pi_1(W_0)$ trivial. By the Geometrization Theorem, $W_0$ is a $3$-ball, which implies
easily that $(|Y|,\gamma)$ is diffeomorphic to $(\s^1\times \s^2, \s^1\times \{pt\})$. 
This shows that $Y$ is the product of $\s^1$ with a teardrop.  Note that $\pi_2^{orb}(Y)\neq 0$
as we argued before.

Finally, suppose $Y$ has two components, denoted by $\gamma_1,\gamma_2$, which
have multiplicities $m_1, m_2$ respectively. From the construction of $Y_0$ in Lemma 5.1,
it follows easily that $m_1,m_2$ are relatively prime. With this understood, it suffices to 
show that $(|Y|,\gamma_1,\gamma_2)$ is diffeomorphic to 
$(\s^1\times \s^2, \s^1\times \{pt\},\s^1\times \{pt\})$. First, as we argued in the previous case,  
$(|Y|,\gamma_1)$ is diffeomorphic to $(\s^1\times \s^2, \s^1\times \{pt\})$, so that if we let 
$W=Y\setminus Nd(\gamma_1)$, then $|W|=\s^1\times D^2$.
It remains to show that $(|W|,\gamma_2)$ is diffeomorphic to $(\s^1\times D^2, \s^1\times \{pt\})$. 
To see this, note that the meridians $\mu_1,\mu_2$ of $\gamma_1,\gamma_2$ bound an 
annulus in $W\setminus Nd(\gamma_2)$. Consequently, 
$$
\Z=\pi_1^{orb}(Y)=\pi_1(W\setminus Nd(\gamma_2))/\langle \mu_1^{m_1},\mu_2^{m_2}\rangle=
\pi_1(W\setminus Nd(\gamma_2))/\langle \mu_2\rangle,
$$
which implies the short exact sequence 
$$
1\rightarrow \Z_{m_2}\rightarrow \pi_1^{orb}(W)=\pi_1(W\setminus Nd(\gamma_2))/\langle \mu_2^{m_2}\rangle\rightarrow \Z\rightarrow 1.
$$
Now if we cut $W$ open along a copy of $\{pt\}\times D^2$ in $|W|=\s^1\times D^2$, 
we obtain a $3$-orbifold $W_0$ with $\partial W_0=\s^2/\Z_{m_2}$. Moreover, it follows from the above short exact sequence that $\pi_1^{orb}(W_0)=\Z_{m_2}$. Then the Geometrization 
Theorem implies that $W_0$ is discal, from which it follows that 
$(|Y|,\gamma_1,\gamma_2)$ is diffeomorphic to 
$(\s^1\times \s^2, \s^1\times \{pt\},\s^1\times \{pt\})$, and consequently, $Y$ is the product
of $\s^1$ with a bad $2$-orbifold. Moreover, $\pi_2^{orb}(Y)\neq 0$.
This finishes the proof of the lemma.

\end{proof}

\noindent{\bf Proof of Theorem 1.4}

\vspace{3mm}

Let $\pi: X\rightarrow Y$ be the orbit map of the fixed-point free $\s^1$-action. 
Suppose the $\s^1$-action is not injective. Then the homotopy class of a regular fiber of 
$\pi$ is finite, and since $z(\pi_1(X))$ is infinite, the image of $z(\pi_1(X))$ under 
$\pi_\ast: \pi_1(X)\rightarrow \pi_1^{orb}(Y)$, clearly contained in 
$z(\pi_1^{orb}(Y))$, must also be infinite. By Lemma 5.2, either $Y$ is irreducible, or
$Y$ is the mapping torus of a periodic diffeomorphism of a $2$-orbifold with finite 
fundamental group. Since we assume that the homotopy class of a regular fiber of $\pi$ 
is finite, $Y$ can not be irreducible. Then it follows easily that $X$ is the mapping torus
of a periodic diffeomorphism of some elliptic $3$-manifold. 

To see that $X$ admits a fiber-sum decomposition, it suffices to consider the case where
the $\s^1$-action is injective. We note first that the fact that the homotopy class 
of a regular fiber of $\pi$ has infinite order implies that the orbit space $Y$ of the $\s^1$-action
does not contain any bad $2$-suborbifolds. In other words, $Y$ must be good. By Lemma 2.4, 
$Y$ admits a reduced spherical decomposition. More precisely, there is a system of finitely 
many spherical $2$-suborbifolds $\Sigma_j\subset Y$, such that after capping off the 
boundary of each component of $Y\setminus \cup_j \Sigma_j$, one obtains a collection 
of $3$-orbifolds $Y_i$,  where each $Y_i$ is irreducible. Furthermore, each $\Sigma_j$ 
must be either an ordinary $2$-sphere or a football, and the pre-image 
$N_j\equiv \pi^{-1}(\Sigma_j)$ must be diffeomorphic to $\s^1\times \s^2$, because the 
homotopy class of a regular fiber of $\pi$ has infinite order. Finally, observe that the restriction
of $\pi$ on each $N_j$ may be uniquely extended to a Seifert-type $\s^1$-fibration on 
$\s^1\times B^3$, so that correspondingly, we obtain the irreducible $\s^1$-four-manifolds 
$X_i$ and the orbit maps $\pi_i: X_i\rightarrow Y_i$. It follows easily that $X$ is 
fiber-sum-decomposed into $X_i$ along $N_j$. We remark that the requirement that the spherical decomposition of $Y$ be reduced ensures that Definition 1.3(iv) is satisfied. This finishes 
off the proof of Theorem 1.4.

\vspace{3mm}

\noindent{\bf Proof of Corollary 1.7}

\vspace{3mm}

By Theorem 1.4, it suffices to consider the case where the $\s^1$-action is injective.
Let $\pi:X\rightarrow Y$ be the corresponding orbit map. We observe that 
$Y$ does not contain any bad $2$-suborbifolds, hence there exist a $3$-manifold $\tilde{Y}$ 
and a finite group $G$ such that $Y=\tilde{Y}/G$ (cf. \cite{BLP,MM}). On the other hand, by the 
homotopy exact sequence associated to $\pi:X\rightarrow Y$ (cf. Haefliger \cite{Hae2}), it follows easily that $\pi_\ast: \pi_2(X)\rightarrow \pi_2^{orb}(Y)$ is an isomorphism.  
Let $\tilde{\pi}: \tilde{X}\rightarrow \tilde{Y}$
be the pull-back fibration via the projection $\tilde{Y}\rightarrow Y$. Then $\tilde{X}$ is a finite
regular cover of $X$. It suffices to show that there exist no
embedded $2$-spheres with odd self-intersection in $\tilde{X}$. 

Suppose to the contrary, there is an embedded $2$-sphere $C$ in $\tilde{X}$ with 
$C^2\equiv 1\pmod{2}$. Consider the projection of $C$ into $\tilde{Y}$ under $\tilde{\pi}$. 
Clearly $[C]\in \pi_2(\tilde{X})$ is nonzero. On the other hand, 
$\pi_\ast: \pi_2(X)\rightarrow \pi_2^{orb}(Y)$ is an isomorphism, so that 
$\tilde{\pi}_\ast: \pi_2(\tilde{X})\rightarrow \pi_2(\tilde{Y})$ is also an isomorphism. 
Consequently, $\tilde{\pi}|_{C}: \s^2\rightarrow \tilde{Y}$ is homotopically nontrivial.
By the Sphere Theorem (cf. \cite{Hem}, Theorem 4.11), there is an embedded $2$-sphere 
$\Sigma$ in a neighborhood of $\tilde{\pi}(C)$, whose class is clearly homologous 
to $\tilde{\pi}_\ast [C]$.  Observe that the Euler class of $\tilde{\pi}:\tilde{X}\rightarrow \tilde{Y}$ evaluates to $0$ on $\Sigma$. 
This is because the pull-back of the Euler class of $\tilde{\pi}$ to $\tilde{X}$ is zero so that the Euler 
class of $\tilde{\pi}$ evaluates trivially on the class of $\tilde{\pi}(C)$. This implies that the restriction 
of $\tilde{\pi}$ to $\Sigma$ is trivial, and in particular, $\Sigma$ has a section $\Sigma^\prime$ in 
$\tilde{X}$. Consequently, we obtain an equation of homology classes
$$
C=\Sigma^\prime+ \sum_i T_i
$$
where $T_i=\tilde{\pi}^{-1}(\gamma_i)$ for some loops $\gamma_i\subset \tilde{Y}$ (cf. \cite{Bald},
Theorem 9). Since $\Sigma^\prime$, $T_i$ all have self-intersection $0$, this implies 
$C^2\equiv 0 \pmod{2}$ which is a contradiction. This finishes the proof of Corollary 1.7.

\section{Theorems 1.1 and 1.2}

This section is devoted to the proofs of Theorems 1.1 and 1.2. We remark that while Theorem 1.1 follows 
readily from Theorems 1.5 and 1.6, the proof of Theorem 1.2 requires some additional care in the case
when all the irreducible $\s^1$-four-manifolds in the fiber-sum decomposition are a mapping torus of
a periodic diffeomorphism of a lens space. Furthermore, the case when the fundamental group 
of the $4$-manifold is isomorphic
to the fundamental group of a Klein bottle needs to be dealt with separately. In all these considerations,
the following lemma describing certain isotopies of periodic diffeomorphisms of $\s^3$ or a lens space
played a key role. 

Let $Y=\s^3/G$ where $G$ is a cyclic subgroup of $SO(4)$ of order $n$ given by 
$$
\lambda \cdot (z_1,z_2)=(\lambda^{p}z_1,\lambda^{q}z_2), 
$$
where $\lambda=\exp(2\pi i/n)$ is a $n$-th root of unity and $\gcd(n,p,q)=1$. 
Set $u=\gcd(n,p)$, $v=\gcd(n,q)$. Then $\gcd(u,v)=1$ so that $uv$ 
is a divisor of $n=|\pi_1^{orb}(Y)|$, and $Y$ has at most two singular circles 
of multiplicities $u,v$, given by $z_2=0$ and $z_1=0$ respectively. 

Suppose $H$ is a subgroup of $G$ of order $\hat{n}$ generated by $\lambda^{n/\hat{n}}$, which
acts freely on $\s^3$. Note that this condition is equivalent to $\gcd(\hat{n},p)=1$ and 
$\gcd(\hat{n},q)=1$; in particular, $\hat{n},u,v$ are pairwise co-prime so that $\hat{n}\leq n/uv$. 
We set $\hat{Y}=\s^3/H$, which is either $\s^3$ or a lens space. With this understood, let 
$f:\hat{Y}\rightarrow \hat{Y}$ be a periodic diffeomorphism such that $Y=\hat{Y}/\langle f\rangle$.

\begin{lemma}
For any singular circle $\gamma$ of $Y$, say the one defined by $z_2=0$ which has multiplicity 
$u$, we let $\hat{\gamma}$ be the pre-image of $\gamma$ in $\hat{Y}$. Then there exist a periodic diffeomorphism $f^\prime: \hat{Y}\rightarrow \hat{Y}$ and an isotopy $f_t: \hat{Y}\rightarrow \hat{Y}$ between $f$ and $f^\prime$, such that
\begin{itemize}
\item the restriction of $f_t$ on $\hat{\gamma}$ is independent of $t$; in particular, $f=f^\prime$ on
$\hat{\gamma}$;
\item $f^\prime$ is free on $\hat{\gamma}$ so that the image of $\hat{\gamma}$ in $Y^\prime=
\hat{Y}/\langle f^\prime \rangle$ is not a singular circle;
\item when $\hat{Y}=\s^3$, one can arrange $f^\prime$ such that $Y^\prime$ is the lens space 
$L(n/u,1)$. 
\end{itemize}
\end{lemma}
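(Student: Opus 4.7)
The plan is to carry out the construction upstairs on $\s^3$ inside the maximal torus $T = U(1) \times U(1) \subset U(2)$, and then descend. Since $H$ and $f$ are both induced by elements of $T$ (namely $\mathrm{diag}(\lambda^{np/\hat n}, \lambda^{nq/\hat n})$ and $\mathrm{diag}(\lambda^p, \lambda^q)$) and $T$ is abelian, any $T$-valued family automatically commutes with $H$ and hence descends to an isotopy of $\hat Y = \s^3/H$.

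The candidate isotopy is
\[
\tilde f_t(z_1, z_2) = \bigl(\lambda^p z_1,\; \lambda^{(1-t)q + ts} z_2\bigr), \qquad t \in [0,1],
\]
for an integer $s$ to be chosen. On $\{z_2 = 0\}$ this restricts to $z_1 \mapsto \lambda^p z_1$ independently of $t$, so after descending to $\hat Y$ the family $f_t$ restricts to a constant map on $\hat\gamma$, securing the first bullet. The second bullet reduces to forcing $f' := f_1$ to have order exactly $m/u$ on $\hat Y$, where $m = n/\hat n$; indeed, $f'|_{\hat\gamma} = f|_{\hat\gamma}$ already has order $m/u$, so once the global order of $f'$ also equals $m/u$, $(f')^\ell = \mathrm{id}$ whenever $(f')^\ell|_{\hat\gamma} = \mathrm{id}$, which gives freeness on $\hat\gamma$ for free.

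The number-theoretic core of the argument takes place in $T/H$. A direct check shows that $\tilde f_1^{m/u}$ lies in $H$ if and only if there exists $j$ with $\lambda^{pm/u} = \lambda^{jnp/\hat n}$ and $\lambda^{sm/u} = \lambda^{jnq/\hat n}$. The first equation has the unique solution $j \equiv j_0 := u^{-1} \pmod{\hat n}$, which exists because the pairwise coprimality of $u,v,\hat n$ noted in the setup forces $\gcd(u, \hat n) = 1$. Substituting back, the second equation reduces to the congruence $s \equiv j_0 qu \pmod{u\hat n}$, which is always solvable. A parallel coset count, using $\gcd(p, n) = u$ and $\gcd(u, \hat n) = 1$, shows that $\tilde f_1^\ell \notin H$ for every $\ell$ with $0 < \ell < m/u$, so the order of $f'$ on $\hat Y$ is exactly $m/u$.

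For the third bullet, the case $\hat Y = \s^3$ corresponds to $\hat n = 1$, where the congruence degenerates to $u \mid s$. The choice $s = p$ then gives $\tilde f_1 = \mathrm{diag}(\lambda^p, \lambda^p)$, the Hopf action by the primitive $(n/u)$-th root of unity $\lambda^p$, whose quotient is by definition the lens space $L(n/u, 1)$. I expect the main obstacle to be the bookkeeping in the previous paragraph: in particular, matching both coordinates of $\tilde f_1^{m/u}$ to the same element $h^{j_0} \in H$ (rather than independently matching them to possibly different elements), and verifying that no smaller power of $\tilde f_1$ already lands in $H$. Everything else, namely the smoothness of the isotopy, its $H$-equivariance, and the constancy on $\hat\gamma$, is transparent because the entire construction stays inside the abelian group $T$.
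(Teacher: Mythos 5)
Your proposal is correct and is essentially the same argument as the paper's. Both constructions do the work inside the maximal torus $T \subset U(2)$: keep the $z_1$-coordinate of the isotopy constant in $t$ (so the family is automatically constant on $\hat\gamma$), and linearly interpolate the angle in the $z_2$-coordinate to an endpoint whose descent to $\hat Y$ has order exactly $m/u$. The paper packages the endpoint as $\delta^k\cdot$ for a freshly defined linear subgroup $G'\subset SO(4)$ of order $n/u$, namely $\delta\cdot(z_1,z_2)=(\delta^{p'}z_1,\delta^{qu'}z_2)$ with $p'=p/u$ and $u'\equiv u^{-1}\pmod{\hat n}$, which makes the freeness of $\langle f'\rangle$ on $\hat\gamma$ and the identification $Y'=\s^3/G'$ immediate; you instead solve directly for $s$ with $s\equiv j_0qu\pmod{u\hat n}$ and verify the coset counts by hand. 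Unwinding the paper's $\delta$-powers back to $\lambda$-powers shows the two endpoints coincide, so this is a difference of bookkeeping rather than of method. One small point worth fixing: you tacitly take $f$ to be represented upstairs by $\lambda\cdot$ itself, whereas the paper allows $F=\lambda^k\cdot$ for any $k$ with $\gcd(n,k)=1$; this is handled at no cost either by raising your $\tilde f_t$ to the $k$-th power or by inserting $k$ into both exponents, since the generated cyclic subgroup (hence the quotient and the freeness statement) is unchanged.
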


\begin{proof}
We first consider the case where $\hat{n}>1$. Set $p^\prime=p/u$, and let $u^\prime$ be the unique
integer satisfying $uu^\prime\equiv 1 \pmod{\hat{n}}$ and $0<u^\prime<\hat{n}$, and consider the following
action of a cyclic subgroup $G^\prime\subset SO(4)$ of order $n^\prime=n/u$, given by
$$
\delta\cdot (z_1,z_2)=(\delta^{p^\prime} z_1,\delta^{qu^\prime}z_2),
$$
where $\delta=\exp (2\pi i/n^\prime)$ is a $n^\prime$-th root of unity. Note that since 
$\lambda^{n/\hat{n}}\cdot (z_1,z_2)=\delta^{n^\prime u/\hat{n}}\cdot (z_1,z_2)$, 
$H=\langle \lambda^{n/\hat{n}}\rangle=\langle \delta^{n^\prime/\hat{n}}\rangle$ 
is also a subgroup of $G^\prime$. 

There is a $k$ with $\gcd(n,k)=1$ such that $f: \hat{Y}\rightarrow \hat{Y}$ is represented 
by the $H$-equivariant map $F: (z_1,z_2)\mapsto \lambda^k\cdot (z_1,z_2)$. We shall consider the 
$H$-equivariant map $F^\prime: (z_1,z_2)\mapsto \delta^k\cdot (z_1,z_2)$, which has the 
following properties: (i) $F=F^\prime$ on $\{(z_1,0)||z_1|=1\}$, (ii) there is a $H$-equivariant 
isotopy $F_{t}$ between $F$ and $F^\prime$ which is constant in $t$ on $\{(z_1,0)||z_1|=1\}$. 
For instance, $F_t: (z_1,z_2)\mapsto (\delta^{kp^\prime}z_1, \theta_t z_2)$, where 
$\theta_t=\exp (2tkqu^\prime\pi i /n^\prime+2(1-t)kq\pi i/n)$, $0\leq t\leq 1$. Let $f^\prime, f_t$ be the
descendant of $F^\prime, F_t$ to $\hat{Y}$ respectively. Then clearly $f_t$ is an isotopy between
$f$ and $f^\prime$, which is constant on $\hat{\gamma}=\{(z_1,0)||z_1|=1\}/H$, and $f^\prime$ is 
free on $\hat{\gamma}$ so that the image of $\hat{\gamma}$ in 
$Y^\prime=\hat{Y}/\langle f^\prime \rangle$ is not a singular circle. This finishes the proof for the case
where $\hat{n}>1$.

Now suppose $\hat{n}=1$, which means that $H$ is trivial. Then instead, we consider the following 
action of a cyclic subgroup $G^\prime\subset SO(4)$ of order $n^\prime=n/u$, given by
$$
\delta\cdot (z_1,z_2)=(\delta^{p^\prime} z_1,\delta^{p^\prime}z_2).
$$
The rest of the argument is the same, with $H\subset G^\prime$ trivially true. 
Note that in this case, $Y^\prime=\s^3/\langle f^\prime\rangle=\s^3/G^\prime=L(n/u,1)$. This finishes
the proof of Lemma 6.1.

\end{proof} 

As an immediate corollary of Lemma 6.1, we obtain the following classification of 
fixed-point free smooth $\s^1$-four-manifolds whose fundamental group is isomorphic to the fundamental group of a Klein bottle. 

\begin{theorem}
Let $X$ be a fixed-point free smooth $\s^1$-four-manifold such that $\pi_1(X)$ is isomorphic 
to the fundamental group of a Klein bottle. Then $X$ is diffeomorphic to the quotient of 
$T^2\times\s^2$ by the involution $\tau$, where 
$$
\tau: (x,y,z)\mapsto (-x,\bar{y},-z) \mbox{ for } x,y \in \s^1\subset\C \mbox{ and } z\in\s^2\subset \R^3.
$$
\end{theorem}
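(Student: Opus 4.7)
The plan is to first apply Theorem 1.6 to reduce to the case of an injective $\s^1$-action. If instead $X$ were the mapping torus of a periodic diffeomorphism of an elliptic $3$-manifold $\hat Y$, then $\pi_1(X)$ would fit into a short exact sequence $1\to\pi_1(\hat Y)\to\pi_1(X)\to\Z\to 1$ with $\pi_1(\hat Y)$ finite, and torsion-freeness of the Klein bottle group $K$ would force $\pi_1(\hat Y)=1$ and $\pi_1(X)=\Z\neq K$, a contradiction. Hence the $\s^1$-action on $X$ is injective.

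Next, since $z(K)=\langle a^2\rangle\cong\Z$, the fiber class $h$ lies in $\langle a^2\rangle$, so $h=a^{2k}$ for some nonzero $k$. Passing to the double cover $\tilde X$ corresponding to the orientable index-$2$ subgroup $\langle a^2,b\rangle\cong\Z^2$, the $\s^1$-action lifts freely and injectively with rank-$2$ center, so Theorem 4.3 forces $\tilde X\cong T^2\times\s^2$ (the cases $T^4$ and $\s^1\times N^3/G$ are inconsistent with $\pi_1(\tilde X)=\Z^2$, since no irreducible closed $3$-manifold has $\pi_1=\Z$). Lemma 2.6 then extends $\tilde\pi:\tilde X\to\tilde Y$ to a principal $T^2$-bundle over a $2$-orbifold $B$; since $\tilde X\cong T^2\times\s^2$, one has $B=\s^2$ and, after a change of basis in $T^2$, the $\s^1$-action is rotation in one circle factor, so $\tilde Y\cong \s^1\times\s^2$ and $k=\pm 1$.

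The deck involution descends to a free, orientation-preserving involution $\bar\tau$ on $\tilde Y=\s^1\times\s^2$, and $Y=\tilde Y/\bar\tau$ has $\pi_1^{orb}(Y)=K/\langle a^2\rangle=D_\infty$. If $\bar\tau$ were homologically trivial, Lemma 2.5 would present $Y$ as a mapping torus of a periodic diffeomorphism of a spherical $2$-orbifold, giving a surjection $D_\infty\twoheadrightarrow\Z$; but $D_\infty^{ab}=\Z_2\oplus\Z_2$, so no such surjection exists. So $\bar\tau$ is homologically nontrivial, and Lemma 2.5 with $m=1$ identifies $Y$ with the genuine $3$-manifold $\R\P^3\#\R\P^3$. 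Thus $\pi:X\to Y$ is a principal $\s^1$-bundle over $\R\P^3\#\R\P^3$, classified by $H^2(Y;\Z)=\Z_2\oplus\Z_2$.

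A van Kampen decomposition of $X$ along the $\s^1\times\s^2$ preimage of the separating $2$-sphere writes $\pi_1(X)$ as the amalgam of the $\pi_1$'s of the two factors $X^\pm$, each an $\s^1$-bundle over $\R\P^3\setminus B^3$ (either trivial with $\pi_1=\Z\times\Z_2$, or nontrivial with $\pi_1=\Z$ and fiber class twice the generator). A direct computation in the four cases shows $\pi_1(X)\cong K$ precisely when both summands are nontrivial, so $X$ is unique up to diffeomorphism. Finally, one checks that the standard model realizes it: $\tau(x,y,z)=(-x,\bar y,-z)$ is free and orientation-preserving on $T^2\times\s^2$, the $\s^1$-action $(\lambda,(x,y,z))\mapsto(\lambda x,y,z)$ commutes with $\tau$ and descends to a fixed-point free $\s^1$-action on $(T^2\times\s^2)/\tau$, and the induced involution $(y,z)\mapsto(\bar y,-z)$ on the orbit space $\s^1\times\s^2$ produces the quotient $\R\P^3\#\R\P^3$. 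I expect the main obstacle to be the clean identification of $Y$ via Lemma 2.5 together with the case-by-case $\pi_1$-computation in the van Kampen step; the rest is bookkeeping with the Euler class.
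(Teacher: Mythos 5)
Your plan takes a genuinely different route than the paper: rather than passing to the $\Z_{2m}$-cover of the orbit space and using Lemma 6.1 to trade away singular circles, you pass to the orientation double cover $\tilde X$ of the $4$-manifold, apply Theorem 4.3 to identify $\tilde X\cong T^2\times\s^2$, and then attempt a van Kampen computation over $\R\P^3\#\R\P^3$. Up through the identification $\tilde X\cong T^2\times\s^2$ the argument is fine, and the van Kampen computation at the end is correct as far as it goes.

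However, there is a genuine gap: you assert that the deck involution descends to a \emph{free} involution $\bar\tau$ on $\tilde Y=\s^1\times\s^2$, and from there jump to ``$Y=\R\P^3\#\R\P^3$.'' This is not automatic. The deck $\Z_2$ acts freely on $\tilde X$, but a $\Z_2$-invariant free $\s^1$-orbit in $\tilde X$ (on which $\Z_2$ acts by the half-turn) projects to a fixed point of $\bar\tau$, and there is nothing in what you've written that excludes this. Indeed, there exist orientation-preserving, homologically nontrivial involutions of $\s^1\times\s^2$ with fixed circles (e.g. $(y,z)\mapsto(\bar y,\sigma(z))$ with $\sigma$ a reflection of $\s^2$). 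In exactly this situation Lemma 2.5 with $m=1$ gives \emph{three} options for $Y$, namely $\R\P^3\#\R\P^3$, $\R\P^3\#_1\widetilde{\R\P^3}_1$, and $\widetilde{\R\P^3}_1\#_1\widetilde{\R\P^3}_1$, the last two being orbifolds with singular circles of multiplicity $2$. Your van Kampen step, which treats $X^\pm$ as honest $\s^1$-bundles over $\R\P^3\setminus B^3$, does not cover these cases: over $\widetilde{\R\P^3}_1\setminus B^3$ one has a Seifert-type fibration with an exceptional $T^2$, and a routine check (central extension $1\to\Z\to\pi_1(X_i)\to\Z_2\to1$, with the torsion option killed by torsion-freeness of $K$, giving $\pi_1(X_i)\cong\Z$) shows that $\pi_1(X)\cong K$ is \emph{also} consistent with those orbit spaces, so $\pi_1$ alone does not see the obstruction you were counting on. This is precisely the issue the paper resolves by invoking Lemma 6.1 to isotope the Seifert fibration on each factor so that the singular circle disappears and the modified orbit space becomes $\R\P^3$; some such mechanism is needed, and your proposal does not supply one.

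A secondary point worth noting: the claim ``$B=\s^2$, and after a change of basis the $\s^1$-action is rotation in one circle factor, so $\tilde Y\cong\s^1\times\s^2$ and $k=\pm1$'' conflates the abstract diffeomorphism $\tilde X\cong T^2\times\s^2$ with the statement that the principal $T^2$-bundle $\Pi:\tilde X\to B$ produced by Lemma 2.6 is the product bundle over $\s^2$; you should verify that $\pi_1(\tilde X)=\Z^2$ together with the homotopy exact sequence actually forces $B=\s^2$ and trivial Euler data before concluding that the fiber class is primitive. This is fixable, but as written it is an unjustified shortcut.
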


\begin{proof}

As $\pi_1(X)$ is isomorphic to the fundamental group of a Klein bottle, it has the following
presentation: $\pi_1(X)=\{c,t| tct^{-1}=c^{-1}\}$. Clearly the center $z(\pi_1(X))$ is the infinite
cyclic subgroup generated by $t^2$. By Theorem 1.4, the $\s^1$-action is injective. We let
$\pi:X\rightarrow Y$ be the corresponding orbit map. Let $m>0$ be the multiplicity of the 
homotopy class of a regular fiber of $\pi$ in $z(\pi_1(X))$. Then 
$$\pi_1^{orb}(Y)=\{c,t|tct^{-1}=c^{-1}, t^{2m}=1\}.$$ 

Let $\hat{Y}$ be the regular covering of $Y$ corresponding to the infinite normal cyclic subgroup
generated by $c$. Since $\hat{Y}$ is good and its fundamental group is torsion-free,
$\hat{Y}$ must be a $3$-manifold, and clearly, $\hat{Y}=\s^1\times \s^2$. The corresponding
group of deck transformations on $\hat{Y}$ is cyclic of order $2m$, generated by $t$ which 
sends $c\in \pi_1(\hat{Y})$ to $-c$. By Lemma 2.5, $Y$ is diffeomorphic to either 
$\R\P^3_m \#_{m} \R\P^3_m$, or $\R\P^3_m \#_{m} \widetilde{\R\P^3}_m$, 
or $\widetilde{\R\P^3}_m\#_{m} \widetilde{\R\P^3}_m$. Consequently, $X$ is fiber-sum-decomposed 
into $X_1,X_2$ along $N$, with $\pi_i:X_i\rightarrow Y_i$, $i=1,2$, where $Y_1,Y_2$ is either 
$\R\P^3_m$ or $\widetilde{\R\P^3}_m$, and $\pi: N\rightarrow \Sigma$ where $\Sigma$ intersects
the singular circle of multiplicity $m$ in $Y$. 

There are $\hat{Y}_i$ and periodic diffeomorphisms $f_i$ such that $Y_i=\hat{Y}_i/\langle
f_i\rangle$ and $X_i$ is the mapping torus of $f_i$, where $i=1,2$. 
We apply Lemma 6.1 to $Y_i$, $\hat{Y}_i$ and $f_i$, with $\gamma$ being the singular 
circle of multiplicity $m$. We claim that in either case, i.e., $Y_i=\R\P^3_m$ or 
$\widetilde{\R\P^3}_m$, $\hat{Y}_i$ must be $\s^3$, i.e., $\hat{n}=1$. For the case where
$Y_i=\widetilde{\R\P^3}_m$, it follows from the fact that $\widetilde{\R\P^3}_m$
has two singular circles with multiplicities $2,m$ respectively, so that $\hat{n}\leq n/uv=2m/2m=1$.
For the case where $Y_i=\R\P^3_m$, a similar argument shows that $\hat{n}\leq 2$. 
Continuing using the notations in Lemma 6.1, we have, in this case, $p=m$, $q=1$, 
$n^\prime=2$, and $f_i^\prime$ is given by multiplication by $\delta$. If $\hat{n}=2$ and 
therefore $\hat{Y}_i=\R\P^3$, $f_i^\prime$ is the identity map on $\hat{Y}_i$.
Consequently, as the mapping torus of $f_i^\prime$, $X_i$ is diffeomorphic to
$\s^1\times \hat{Y}_i$, and $\pi_1(X)$ contains a torsion subgroup of $\Z_2$ coming 
from $\pi_1(\hat{Y}_i)$. But this contradicts the fact that $\pi_1(X)$ is isomorphic to the 
$\pi_1$ of a Klein bottle. Hence $\hat{Y}_i=\s^3$ in both cases. We conclude by observing 
that each $X_i$ is the mapping torus of the antipodal map on $\s^3$. We denote by
$\pi_i^\prime: X_i\rightarrow \R\P^3$ the corresponding Seifert-type $\s^1$-fibration. 

Finally, by the property in Lemma 6.1 that the restriction of $f_t$ on $\hat{\gamma}$ is independent 
of $t$, it is easily seen that the Seifert-type $\s^1$-fibrations $\pi_i:X_i\rightarrow Y_i$ and 
$\pi_i^\prime: X_i\rightarrow \R\P^3$ are identical on
the mapping torus of $f=f^\prime: \hat{\gamma}\rightarrow \hat{\gamma}$. It follows easily
that $X$ is also fiber-sum-decomposed into $X_1,X_2$ along $N$, with $\pi_i^\prime: 
X_i\rightarrow \R\P^3$ on each factor $X_i$. Theorem 6.2 follows easily. 

\end{proof}

Theorem 1.1 follows immediately from the following theorem. 

\begin{theorem}
Let $G$ be a finitely presented group such that (i) $\text{rank }z(G)=1$, (ii)
$G$ is single-ended and is not isomorphic to the $\pi_1$ of 
a Klein bottle, (iii) any canonical JSJ decomposition of $G$ contains a vertex 
subgroup which is not isomorphic to an HNN extension of a finite cyclic group.
Let $S_G$ be the set of equivariant diffeomorphism classes of orientable, fixed-point free,
smooth $\s^1$-four-manifolds $X$ such that $\pi_1(X)=G$. Then there exists a constant
$C>0$ depending only on $G$, such that $\# S_G<C$.
\end{theorem}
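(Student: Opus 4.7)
The plan is to convert the equivariant classification of fixed-point free $\s^1$-actions on $X$ into counting fiber-sum decompositions, and then control the resulting data using Theorems 1.4--1.6, Proposition 3.5, and the hypotheses on $G$. First I would show that every fixed-point free $\s^1$-action on $X$ is injective. Since $z(G)$ has rank $1$ and is in particular infinite, Theorem 1.6 applies: the only non-injective possibility is that $X$ is the mapping torus of a periodic diffeomorphism of an elliptic $3$-manifold, but then Lemma 4.2 would force $G$ to be double-ended, contradicting hypothesis (ii). Hence the $\s^1$-action is injective and, by the proof of Theorem 1.6, yields a fiber-sum decomposition of $X$ into irreducible factors $\{X_i\}_{i \in I}$ glued along submanifolds $\{N_j\}_{j \in J}$ with each $N_j \cong \s^1 \times \s^2$.

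Second, I would bound, in terms of $G$ alone, the combinatorial skeleton and the topology of each piece. Theorem 1.4(1) identifies the Z-splitting of $G$ induced by the decomposition with a canonical JSJ decomposition, and Proposition 3.5 then pins down the cardinalities of $I$ and $J$, the conjugacy classes of the vertex groups $\pi_1(X_i)$, and the set of conjugacy classes of the edge groups $\pi_1(N_j)$. For each factor whose $\pi_1(X_i)$ is single-ended, Theorem 1.5(1) furnishes a diffeomorphism realizing any isomorphism of fundamental groups, so $X_i$ is determined up to diffeomorphism. For each factor whose $\pi_1(X_i)$ is double-ended, Theorem 1.5(2) exhibits $X_i$ as a mapping torus of a periodic diffeomorphism of an elliptic $3$-manifold; the realizing diffeomorphism exists unless that elliptic $3$-manifold is a lens space, which is exactly the case $\pi_1(X_i) \cong A \ast_A \alpha$ with $A$ finite cyclic. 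Even in the lens-space case, the number of distinct $X_i$ with a given $\pi_1$ is finite, so the contribution of each vertex is $G$-bounded. Moreover, the gluings $\phi_j$ are unique up to isotopy by Remark (1) following Definition 1.3, so they add no further freedom.

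The main subtlety, which I expect to be the hardest step, is to pass from a bound on these local pieces to a bound on equivariant equivalence classes of $\s^1$-actions on the fixed manifold $X$. Here hypothesis (iii) is decisive: the \emph{anchor} vertex subgroup it provides is precisely one for which Theorem 1.5 gives a rigidly determined factor. Using this rigid factor as a base point and invoking Theorem 1.4(2) (handling the null-homologous condition via the fiber-preserving isotopies of the $N_j$ supplied by Lemma 3.7), one can align any two fiber-sum decompositions of $X$ along the anchor vertex and then extend outward along the graph of groups; at each subsequent edge, the uniqueness of the gluing $\phi_j$ together with the finite multiplicity of each factor give a bounded branching, so the total count of equivalence classes is $G$-bounded. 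Without hypothesis (iii), every vertex could in principle be a lens-space mapping torus, allowing unbounded combinations of distinct $\s^1$-manifolds with the same fundamental group; condition (iii) is what rules out this pathology and makes the enumeration terminate after a $G$-bounded number of steps.
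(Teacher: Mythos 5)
Your proposal correctly identifies the outermost architecture: injectivity via Theorem 1.6 and Lemma 4.2, passage to a fiber-sum decomposition, and bounding the combinatorics of the induced Z-splitting via Theorem 1.4(1) and Proposition 3.5. But the heart of the paper's argument — and the actual reason hypothesis (iii) is present — is not addressed.

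The data that need to be bounded are not (abstract) diffeomorphism classes of the factors $X_i$ but \emph{fiber-preserving} diffeomorphism classes of the Seifert-type $\s^1$-fibrations $\pi_i : X_i \to Y_i$. Theorem 1.5 controls the former, not the latter. In the double-ended case (c), $X_i$ is a mapping torus of a periodic diffeomorphism of an elliptic $3$-manifold, and the mapping-torus structure (hence the orbit map $\pi_i$) depends on the multiplicity of the regular fiber class $h$ in $z(\pi_1(X_i))$, which is a priori unbounded even once the diffeomorphism type of $X_i$ is fixed. Controlling this integer is the crux: the paper reduces to bounding the multiplicity of $h$ in $z(\pi_1(X))$, observes that this divides the multiplicity of $h$ in any edge group $\pi_1(N_j)$, and then shows it is $G$-bounded as long as \emph{some} $\Sigma_j$ is an ordinary sphere, \emph{some} $Y_i$ has infinite $\pi_1^{orb}$, or, in the remaining all-footballs all-spherical case, some $Y_i$ has $\pi_1(\hat Y_i)$ non-abelian — which is precisely what hypothesis (iii) guarantees, so that Lemma 6.4 can bound the singular-circle multiplicities of that $Y_i$. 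Your proposal never raises the fiber-multiplicity issue at all.

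Relatedly, your reading of hypothesis (iii) as supplying an ``anchor vertex for which Theorem 1.5 gives a rigidly determined factor'' is not correct: a vertex group that is not an HNN extension of a finite cyclic group may still be double-ended (an HNN extension of a finite non-cyclic group), in which case Theorem 1.5(1) does not apply and the factor is not ``rigid'' in your sense; what matters is that its $\pi_1(\hat Y_i)$ is then non-abelian, so Lemma 6.4 applies. Finally, your invocation of Theorem 1.4(2) is not available here: that result requires the $N_j$, $N'_{j'}$ to be null-homologous (equivalently, the underlying graph of the JSJ splitting to be a tree), which is not part of the hypotheses of the statement, and the fiber-preserving isotopies of Lemma 3.7 only reorganize the incidence structure — they do not change whether $N_j$ is null-homologous. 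The paper's proof of this statement does not use Theorem 1.4(2) at all; it is a direct counting of the data (graph isomorphism type, embedding isotopy classes, fiber-preserving diffeomorphism classes of factors), each bounded separately.
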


\begin{proof}
Let $X$ be an orientable, fixed-point free, smooth $\s^1$-four-manifolds $X$ such that 
$\pi_1(X)=G$. Since $G$ is single-ended, it follows easily from Theorem 1.4 that any fixed-point free
$\s^1$-action on $X$ must be injective. Thus any fixed-point free $\s^1$-action on $X$ is associated with 
a canonical fiber-sum decomposition. 
Suppose $X$ is decomposed into factors $X_i$ along $N_j$. For convenience we shall fix an orientation
of $X$, which is the one induced from the fiber-sum decomposition. Then the following data completely determine the oriented equivariant diffeomorphism class of $X$:

\begin{itemize}
\item [{(i)}] The isomorphism class of the underlying graph of $\Lambda$.
\item [{(ii)}] For each pair of $i,j$ such that $N_j\subset X_i$, the fiber-preserving isotopy 
class of embeddings of $N_j$ in $X_i$ for each fixed  oriented, fiber-preserving
diffeomorphism class of $X_i$.
\item [{(iii)}] For each $i$, the oriented, fiber-preserving diffeomorphism class of $X_i$.
\end{itemize}

These data are subject to the following constraints: the cardinalities of $\{X_i\}$ and $\{N_j\}$ 
and the conjugacy classes of subgroups $\pi_1(X_i)$, $\pi_1(N_j)$ in $G$ are determined by 
$G$ (cf. Proposition 3.5). With this understood, our aim is to show that the number of objects in 
each of (i), (ii), and (iii) is bounded by a constant depending only on $G$. 

The number of objects in (i) is clearly bounded by a constant depending only on $G$, since 
the cardinalities of $\{X_i\}$ and $\{N_j\}$ are fixed by $G$. For the objects in (ii) and (iii), where
an index $i$ is being fixed, we shall discuss separately according to the following three cases, 
(a) $\text{rank }z(\pi_1(X_i))>1$, (b) $\pi_1(X_i)$ is single-ended with $\text{rank }z(\pi_1(X_i))=1$,
(c) $\pi_1(X_i)$ is double-ended. 

Note that the number of objects in (ii) is bounded by the number of singular circles of
$Y_i$ plus one, so we need to show that for each $i$, the number of singular circles of
$Y_i$ is bounded by a constant depending only on $G$. With this understood, consider 
case (a) where $X_i$ is a Seifert-type $T^2$-fibration over a $2$-orbifold $B_i$ with infinite 
$\pi_1^{orb}$. As shown in the proof of Theorem 1.6(1), $B_i$ is uniquely determined by 
$\pi_1(X_i)$,  hence by $G$. On the other hand, $Y_i$ is Seifert fibered over $B_i$, 
so that the number of singular circles of $Y_i$ is bounded by the number of singular 
points of $B_i$, which depends only on $G$. In case (b), $Y_i$ is uniquely determined by 
$\pi_1(X_i)$ as shown in the proof of 
Theorem 1.6(1), hence the number of singular circles of $Y_i$ depends only on $G$.
In case (c),  $\pi_1^{orb}(Y_i)$ is finite. The Geometrization Theorem implies that $Y_i$ is 
spherical. Since the singular set of $Y_i$ consists of a union of embedded circles, 
the work of Dunbar in \cite{Dun} shows that $Y_i=\s^3/G_i$ where $G_i$ is a subgroup of
$SO(4)$ which preserves a Hopf fibration. It follows easily that the number of singular 
components of $Y_i$ is universally bounded (say by $4$). This shows that the number of 
objects in (ii) is bounded by a constant depending only on $G$. 

Finally, we examine the boundedness of the number of objects in (iii). In case (a), the 
diffeomorphism class of $X_i$ is uniquely determined by $\pi_1(X_i)$ (cf. Theorem 1.6(1)),
however, the Seifert-type $\s^1$-fibration $\pi_i: X_i\rightarrow Y_i$ has infinitely many choices, one 
for each primitive element of $z(\pi_1(X_i))$. With this understood, note that by assumption
$z(G)$ has rank $1$, so that there is only one possible choice for the regular fiber class of 
$\pi_i$ in $z(\pi_1(X_i))$. This shows that $\pi_i:X_i\rightarrow Y_i$ is uniquely determined by 
$G$ in this case. In case (b), both $X_i$ and $\pi_i$ are uniquely determined by $\pi_1(X_i)$ 
as shown in the proof of Theorem 1.6(1), hence are also determined by $G$.  

Lastly, we consider case (c). By Theorem 1.6(2), $X_i$ is the mapping torus of a periodic 
diffeomorphism $f_i:\hat{Y}_i\rightarrow \hat{Y}_i$ of an elliptic $3$-manifold. 
It follows from the proof of Theorem 1.6(2) that the number of diffeomorphism 
classes of $X_i$ is bounded by a constant depending only on $\pi_1(X_i)$. In order to 
bound the number of fiber-preserving diffeomorphism classes, we shall employ the rigidity
theorem of injective Seifert fibered space construction as in the proof of Theorem 1.6(1),
with $k=1$ and $W=\s^3$. With this understood, it is clear that it suffices to show that the
number of possible short exact sequences 
$1\rightarrow \Gamma\rightarrow \pi\rightarrow Q\rightarrow 1$ involved in the argument 
is bounded by a constant depending only on $G$. Equivalently, we will show that the 
multiplicity of the homotopy class of a regular fiber of $\pi_i$ in $z(\pi_1(X_i))$ is bounded 
by a constant depending only on $G$.

Denote by $h$ the homotopy class of a regular fiber. Since the conjugacy 
classes of the subgroups $\pi_1(X_i)$ in $G$ depend only on $G$, it follows easily that 
it suffices to bound the multiplicity of $h$ in $z(\pi_1(X))$. With this understood, we observe 
that since for each $j$, $z(\pi_1(X))\subset \pi_1(N_j)$, the multiplicity of $h$ in $z(\pi_1(X))$ 
is bounded by the multiplicity of $h$ in $\pi_1(N_j)$ for every $j$, which equals $1$ if 
$\Sigma_j$ is an ordinary $2$-sphere, and equals the multiplicity of the singular circle 
of $Y$ that $\Sigma_j$ intersects otherwise. In particular, if one of the $\Sigma_j$ is an ordinary 
$2$-sphere, or one of the $Y_i$ has infinite fundamental group, we are done for (iii).
(Note that since $G$ is single-ended, there is at least one $N_j$ if case (c) is valid.)

Suppose $\pi_1^{orb}(Y_i)$ is finite for each $i$ and $\Sigma_j$ is a football for each $j$. 
Again, since the singular set of $Y_i$ consists of a union of embedded circles, the work of 
Dunbar in \cite{Dun} shows that $Y_i=\s^3/G_i$ for a finite subgroup $G_i$ of $SO(4)$ 
which preserves a Hopf fibration. It follows that $X_i$ is the mapping torus of a periodic 
diffeomorphism $f_i:\hat{Y}_i\rightarrow \hat{Y}_i$, where $\hat{Y}_i$ has a Seifert fibration
induced from the Hopf fibration and $f_i$ preserves the Seifert fibration on $\hat{Y}_i$.
By the assumption (iii), there is a $Y_i$ such that $\pi_1(\hat{Y}_i)$ is non-abelian.
With the following lemma (Lemma 6.4), we finish the proof by observing that $\pi_1(\hat{Y}_i)$ 
is completely determined by $\pi_1(X_i)$ which depends only on $G$.

\end{proof}

\begin{lemma}
Let $\hat{Y}$ be an elliptic $3$-manifold with non-abelian fundamental group, and let $\pi: \hat{Y}
\rightarrow B$ be the unique Seifert fibration on $\hat{Y}$. Suppose $f:\hat{Y}\rightarrow \hat{Y}$ 
is an orientation-preserving periodic diffeomorphism which preserves $\pi$. Then the multiplicity 
of any singular circle of the $3$-orbifold $Y=\hat{Y}/\langle f\rangle$ is bounded by a constant 
depending only on the multiplicities of the singular points of $B$.
\end{lemma}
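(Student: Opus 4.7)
The plan is to realize $Y$ as $\s^3/\tilde{G}$ for a finite subgroup $\tilde{G}\subset SO(4)$ preserving a Hopf fibration, and then to bound the orders of cyclic point-stabilizers of $\tilde{G}$ via the classification of finite subgroups of $SO(3)$.

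First I would lift the action of $\langle f\rangle$ on $\hat{Y}=\s^3/H$ to an action of a finite group $\tilde{G}$ on $\s^3$, fitting into an extension $1\to H\to \tilde{G}\to \langle f\rangle\to 1$; by equivariant Geometrization (cf.\ \cite{DL}) one may arrange $\tilde{G}\subset SO(4)$, so that $Y=\s^3/\tilde{G}$. Since $f$ preserves the Seifert fibration $\pi$ on $\hat{Y}$, which is the quotient of the Hopf fibration on $\s^3$, the group $\tilde{G}$ preserves the Hopf fibration, i.e.\ $\tilde{G}\subset U(2)\subset SO(4)$. Let $\tilde{G}_0$, $H_0$ denote the images of $\tilde{G}$, $H$ under the projection $U(2)\to PU(2)=SO(3)$; then $B=\s^2/H_0$ and $B':=\s^2/\tilde{G}_0=B/\bar{G}$, where $\bar{G}:=\tilde{G}_0/H_0=\langle\bar{f}\rangle$ is cyclic. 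Because $\pi_1(\hat{Y})$ is non-abelian, $H_0$ is itself non-cyclic, so $H_0\in\{D_{2n_0},T,O,I\}$ with $B$ having signature $(2,2,n_0)$, $(2,3,3)$, $(2,3,4)$, or $(2,3,5)$ respectively.

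The kernel of $U(2)\to SO(3)$ is the scalar subgroup $\{e^{i\theta}I\}$, which acts on $\s^3$ as the free Hopf circle action; in particular no non-identity element of this kernel fixes a point of $\s^3$. Hence for every $x\in\s^3$ with image $\bar{x}\in\s^2$, the restriction $\mathrm{Stab}_{\tilde{G}}(x)\to \mathrm{Stab}_{\tilde{G}_0}(\bar{x})$ of the projection is injective. The multiplicity of the singular circle of $Y$ through the image of $x$ equals $|\mathrm{Stab}_{\tilde{G}}(x)|$, so it is bounded by $|\mathrm{Stab}_{\tilde{G}_0}(\bar{x})|$, i.e.\ by the multiplicity of the corresponding singular point of $B'$.

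It then remains to bound the multiplicities of singular points of $B'$ in terms of those of $B$. Since $H$ is normal in $\tilde{G}$, we have $H_0\lhd \tilde{G}_0$ with cyclic quotient, and the classification of finite subgroups of $SO(3)$ together with normalizer computations restricts $\tilde{G}_0$ to an explicit short list. When $H_0\in\{T,O,I\}$, the identities $N_{SO(3)}(T)=O$, $N_{SO(3)}(O)=O$, $N_{SO(3)}(I)=I$ force $\tilde{G}_0\in\{T,O,I\}$, giving point-stabilizers of order at most $5$. When $H_0=D_{2n_0}$ with $n_0\ge 3$, normality of $D_{2n_0}$ in a dihedral supergroup $D_{2m_0}$ forces $m_0/n_0\mid 2$ (the key computation being that a generating rotation of $D_{2m_0}$ shifts the reflection axes by $2\pi/m_0$, which must be a multiple of the angular step $\pi/n_0$ between reflection axes of $D_{2n_0}$), so $\tilde{G}_0\in\{D_{2n_0},D_{4n_0}\}$ with point-stabilizers of order at most $2n_0$. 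The sporadic case $H_0=V_4$ allows $\tilde{G}_0\in\{V_4,D_8,T\}$, again with bounded stabilizers. In every case the multiplicity of a singular circle of $Y$ is at most twice the maximum multiplicity of a singular point of $B$. The main obstacle is precisely this last step: combining the normality of $H_0$ in $\tilde{G}_0$, the cyclic quotient requirement, and the classification of finite subgroups of $SO(3)$ to force $\tilde{G}_0$ into a bounded list and yield a bound in terms of the invariants of $B$ alone.
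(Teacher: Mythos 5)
Your approach is genuinely different from the paper's. The paper works directly and quite elementarily: it fixes a generator $f_\gamma$ of the isotropy subgroup of the singular circle $\gamma$, looks at the induced map $\bar f_\gamma$ on the turnover base $B$, and splits into three short cases (trivial; orientation-preserving and nontrivial; orientation-reversing), reading off the bound immediately in each. You instead lift $\langle f\rangle$ to a finite group $\tilde G\subset SO(4)$ preserving the Hopf fibration, project to $SO(3)$, observe that point-stabilizers in $\s^3$ inject into point-stabilizers on $\s^2$ because the kernel (the scalar circle) acts freely, and then bound the possible $\tilde G_0$ via the classification of finite subgroups of $SO(3)$ and normality/normalizer computations. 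That injectivity observation is correct, and the normalizer computations for $T$, $O$, $I$ and for $D_{2n_0}\lhd D_{2m_0}$ (including the $V_4$ sporadic case) are also correct. This route trades the paper's direct case check for a cleaner conceptual picture (stabilizers downstairs dominate stabilizers upstairs), at the cost of appealing to equivariant Geometrization and the $SO(3)$ classification.

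There is, however, a gap. You assert ``$\tilde G$ preserves the Hopf fibration, i.e.\ $\tilde G\subset U(2)$,'' and then work with $\tilde G_0\subset SO(3)$. But the full subgroup of $SO(4)$ preserving the Hopf fibration is an index-two extension of $U(2)$ (roughly $(\mathrm{Pin}(2)\times S^3)/\Z_2$), and the hypothesis on $f$ is only that it is orientation-preserving on $\hat Y$ and preserves $\pi$; it is allowed to reverse orientation simultaneously on the fibers and on the base $B$. In that case $\tilde G\not\subset U(2)$ and $\tilde G_0$ contains orientation-reversing elements of $O(3)$: the quotient $B'$ acquires mirror boundary, and your classification and normalizer step, carried out in $SO(3)$, does not directly apply. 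The paper's proof handles exactly this possibility as its last case, where $\bar f_\gamma$ is a reflection in a great circle of $B$ and the multiplicity is forced to be $2$. Your argument can be patched --- for instance by noting that $\tilde G_0\cap SO(3)$ still contains $H_0$ normally with cyclic quotient, applying your bound there, and observing that adjoining the orientation-reversing coset can at most double the stabilizer --- but as written the proof is incomplete on this point.
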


\begin{proof} 
For any singular circle $\gamma$ in $Y$, the multiplicity of $\gamma$ equals the order of its 
isotropy subgroup. Let $f_\gamma$ be a generator of the isotropy subgroup, which is given 
by $f^k$ for some $k$. Since $f:\hat{Y}\rightarrow \hat{Y}$ preserves $\pi:\hat{Y}\rightarrow B$, 
so does $f_\gamma$, and there is an induced periodic diffeomorphism $\bar{f}_\gamma:
B\rightarrow B$ of the $2$-orbifold $B$. 

Since $\pi_1(\hat{Y})$ is non-abelian, $B$ is a turnover with multiplicities $(2,2,n)$, $(2,3,3)$,
$(2,3,4)$, or $(2,3,5)$. We shall discuss according to (i) $\bar{f}_\gamma$ is trivial, (ii) 
$\bar{f}_\gamma$ is non-trivial.

Suppose $\bar{f}_\gamma$ is trivial. Then $f_\gamma$ acts as a rotation on each fiber of 
$\pi:\hat{Y}\rightarrow B$. It follows easily that $\gamma$ must be an exceptional fiber of
$\pi$, and the order of $f_\gamma$ is a divisor of the multiplicity of the singular point 
$\pi(\gamma)\in B$. 

Suppose $\bar{f}_\gamma$ is non-trivial. Then there are two possibilities: (a) $\bar{f}_\gamma$ 
is orientation-preserving, (b) $\bar{f}_\gamma$ is orientation-reversing. In case (a), the order
of $\bar{f}_\gamma$ is either $2$ or $3$, and $\bar{f}_\gamma$ has two isolated fixed-points.
Moreover, $\gamma$ must be the fiber over one of the fixed-points of $\bar{f}_\gamma$.
It follows easily that the multiplicity of $\gamma$ equals the order of $\bar{f}_\gamma$, which
is at most $3$. In case (b), $\bar{f}_\gamma$ must be a reflection over a great circle in $B$ because
$\bar{f}_\gamma$ has a nonempty fixed-point (which contains $\pi(\gamma)$, for instance). Since
$f$ is orientation-preserving, $f_\gamma$ must be a reflection on the fibers over the great circle
fixed under $\bar{f}_\gamma$. It follows that the multiplicity of $\gamma$ equals $2$ in this case.

\end{proof}

\noindent{\bf Proof of Theorem 1.2}

\vspace{3mm}

Theorem 1.2 follows from Theorem 6.3 except in the following cases:

\begin{itemize}
\item [{(a)}] $\text{rank }z(G)>1$;
\item [{(b)}] $G$ is double-ended;
\item [{(c)}] $G$ is isomorphic to the $\pi_1$ of a Klein bottle;
\item [{(d)}] none of the above is true, and moreover, every vertex subgroup of a canonical JSJ decomposition of $G$ is an HNN extension of a finite cyclic group.
\end{itemize}

Cases (a), (c) are settled with the help of Theorems 4.3 and 6.2. Case (b) is settled by
Theorem 1.4, Lemma 4.2, and Theorem 1.6. (Note that in case (b) where $G$ is double-ended, 
we appeal to Theorem 1.6(2), where we observe that when $X$ is a mapping torus of a periodic 
diffeomorphism of a lens space, the number of possible lens spaces is bounded by a constant depending only on $\pi_1(X)$.)

For case (d), we shall continue with the proof of Theorem 6.3, where we are left with the 
situation that $\pi_1(\hat{Y_i})$ is finite cyclic for each $i$ and $\Sigma_j$ is a football for 
each $j$. Recall that $Y_i=\hat{Y}_i/\langle f_i\rangle$ for some periodic diffeomorphism 
$f_i$. Moreover, there is a Seifert fibration $pr_i: \hat{Y}_i\rightarrow B_i$ which is induced 
from a Hopf fibration and is preserved under $f_i$. 

We shall analyze the multiplicities of the singular circles in $Y_i$. To this end, let $\gamma$
be a singular circle and $f_\gamma$ be a generator of its isotropy subgroup. Denote by
$\bar{f}_\gamma: B_i\rightarrow B_i$ the induced map. If $\bar{f}_\gamma$ is orientation-reversing,
then as we showed in the proof of Lemma 6.4, the multiplicity of $\gamma$ is $2$. If
$\bar{f}_\gamma$ is orientation-preserving and switches the two singular points of $B_i$,
then the multiplicity of $\gamma$ is also $2$, as we argued in the proof of Lemma 6.4.
In the remaining cases where $\bar{f}_\gamma$ is either trivial or fixes the two singular points 
of $B_i$, or $B_i$ has no singular points at all, the multiplicity of $\gamma$ may not be bounded 
by a constant depending only on $G$, and we need to deal with it differently. 

Note that in either of the remaining cases, $Y_i=\s^3/G_i$ for a finite subgroup $G_i$ 
of $SO(4)$, which is given by
$$
\lambda \cdot (z_1,z_2)=(\lambda^{p_i}z_1,\lambda^{q_i}z_2), 
$$
where $\lambda=\exp(2\pi i/n_i)$ is a $n_i$-th root of unity and $\gcd(n_i,p_i,q_i)=1$. 
Set $u_i=\gcd(n_i,p_i)$, $v_i=\gcd(n_i,q_i)$. Then $Y_i$ has at most two singular circles 
of multiplicities $u_i,v_i$. Furthermore, if $\Sigma_j$ intersects the singular circle of 
multiplicity $u_i$ (resp. $v_i$), the index of $\pi_1(N_j)$ in $\pi_1(X_i)$ is $n_i/u_i$ 
(resp. $n_i/v_i$). Consequently, if both singular circles of $Y_i$ are intersected by $\Sigma_j$ 
for some $j$, then $u_i\leq n_i/v_i$, $v_i\leq  n_i/u_i$ are both bounded by a constant depending 
only on $G$ (cf. Proposition 3.5). Clearly, we are done for (iii) in the proof of Theorem 6.3 
if there exists a $Y_i$ for which such a situation occurs. 

We are left to examine the case where for each $i$, there is exactly one singular circle of 
$Y_i$ which is intersected by $\Sigma_j$ for some $j$. In this case, we shall apply Lemma 6.1
and change the Seifert-type $\s^1$-fibrations $\pi_i:X_i\rightarrow Y_i$ in the fiber-sum decomposition
of $X$ to $\pi^\prime_i:X_i\rightarrow Y_i^\prime$. Note that with the new fibrations $\pi_i^\prime$,
each $N_j$ is fibered over an ordinary $2$-sphere. Consequently, up to suitable modifications
of the Seifert-type $\s^1$-fibrations, the number of objects in (iii) in the proof of Theorem 6.3 is bounded 
by a constant depending only on $G$, from which Theorem 1.2 follows. 

\vspace{5mm}

\noindent{\bf Acknowledgments}: The author is in debt to Slawomir Kwasik for many helpful
conversations throughout the course of the work. This work was partly done during 
the author's visit at the Institute for Advanced Study, and he wishes to thank the institute for its 
hospitality and financial support. Finally, the author thanks the referee for a very careful reading 
of the article and for the many suggestions which have improved considerably the exposition of
the paper. 

\vspace{3mm}

\vspace{2mm}

{\Small University of Massachusetts, Amherst.\\
{\it E-mail:} wchen@math.umass.edu

\end{document}